\newtheorem{theorem}{Theorem}[section]
\newtheorem{corollary}[theorem]{Corollary}
\newtheorem{proposition}[theorem]{Proposition}
\newtheorem{lemma}[theorem]{Lemma}
\newtheorem{question*}{Question}
\newtheorem{problem*}{Problem}
\theoremstyle{definition}
\theoremstyle{remark}
\newtheorem*{remark}{Remark}
\numberwithin{equation}{section}
\crefname{figure}{Figure}{Figures}
\theoremstyle{plain}
\newtheorem*{theorem*}{Theorem}
\crefname{theorems}{Theorem}{Theorems}
\crefname{corollaries}{Corollary}{Corollaries}
\newtheorem*{corollary*}{Corollary}
\crefname{corollaries*}{Corollary}{Corollaries}
\crefname{lemma}{Lemma}{Lemmata}
\crefname{proposition}{Proposition}{Propositions}
\crefname{conjectures}{Conjecture}{Conjectures}
\newtheorem*{conjonjecture*}{Conjecture}
\crefname{conjonjectures*}{Conjecture}{Conjectures}
\crefname{definitions}{Definition}{Definitions}
\crefname{hypotheses}{Hypothesis}{Hypotheses}
\newcommand{\Z}{\mathbb{Z}}
\newcommand{\R}{\mathbb{R}}
\newcommand{\Q}{\mathbb{Q}}
\newcommand{\re}{\mathop{\textup{Re}}}
\newcommand{\im}{\mathop{\textup{Im}}}
\newcommand{\Ad}{\mathrm{Ad}}
\newcommand{\GL}{\mathop{\mathrm{GL}}}
\newcommand{\SL}{\mathop{\mathrm{SL}}}
\newcommand{\N}{\mathrm{N}}
\DeclareFontFamily{U}  {MnSymbolF}{}
\DeclareSymbolFont{symbolsMN}{U}{MnSymbolF}{m}{n}
\DeclareFontShape{U}{MnSymbolF}{m}{n}{
    <-6>  MnSymbolF5
   <6-7>  MnSymbolF6
   <7-8>  MnSymbolF7
   <8-9>  MnSymbolF8
   <9-10> MnSymbolF9
  <10-12> MnSymbolF10
  <12->   MnSymbolF12}{}
\DeclareFontShape{U}{MnSymbolF}{b}{n}{
    <-6>  MnSymbolF-Bold5
   <6-7>  MnSymbolF-Bold6
   <7-8>  MnSymbolF-Bold7
   <8-9>  MnSymbolF-Bold8
   <9-10> MnSymbolF-Bold9
  <10-12> MnSymbolF-Bold10
  <12->   MnSymbolF-Bold12}{}
\DeclareMathSymbol{\tbigtimes}{\mathop}{symbolsMN}{2}
\newcommand*{\bigtimes}{%
  \DOTSB
  \tbigtimes
  \slimits@ 
}
\renewcommand{\tilde}{\widetilde}
\renewcommand{\epsilon}{\varepsilon}
\DeclareMathAlphabet{\mathpzc}{OT1}{pzc}{m}{it}
\let\@wraptoccontribs\wraptoccontribs
\title[Zeros of $L$-functions in families near the critical line]{Zeros of $L$-functions in families near the critical line}
\author{Valentin Blomer}
\address{Mathematisches Institut, Endenicher Allee 60, 53115 Bonn, Germany}
\email{\href{mailto:blomer@math.uni-bonn.de}{blomer@math.uni-bonn.de}}
\author{Jesse Thorner}
\address{Department of Mathematics, University of Illinois, Urbana, IL 61801, USA}
\email{\href{mailto:jesse.thorner@gmail.com}{jesse.thorner@gmail.com}}
\begin{document}

\begin{abstract}
We combine the relative trace formula with analytic methods to obtain zero density estimate for $L$-functions in various families of automorphic representations for $\mathrm{GL}(m)$. Applications include strong bounds for the average analytic rank of these $L$-functions at the central point and average equidistribution results for the imaginary parts of the zeros. 
\end{abstract}

\keywords{$L$-functions of higher rank, zero density estimates, large sieve, analytic ranks, equidistribution} 
\subjclass[2020]{11M41, 11M26, 11F70}

\thanks{The first author was  supported by ERC Advanced Grant  101054336 and Germany's Excellence Strategy grant EXC-2047/1 - 390685813.  The second author is partially supported by the National Science Foundation (DMS-2401311) and the Simons Foundation (MP-TSM-00002484).}

\maketitle

\section{Introduction and statement of results}

\subsection{Zero density estimates} A zero density estimate is a practical substitute for the generalized Riemann hypothesis (GRH). While it does not exclude zeros off the critical line, it shows in a precise quantitative sense that most members of a family of $L$-function do not have zeros with a certain distance to the critical line. A prototype is the Bombieri--Vinograodv theorem that shows that a suitable zero density estimate for Dirichlet $L$-functions governs the average distribution of primes in arithmetic progressions of the same strength as the Riemann hypothesis for Dirichlet $L$-functions.

Around 1990, Sarnak introduced a similar concept for automorphic forms: an automorphic density theorem is a statement that in a  family of automorphic forms only few members can violate the Ramanujan conjecture to a given extent. Such statements are usually an application of a trace or relative trace formula.   

In this paper, we use the machinery of the relative trace formula leading to recent progress on automorphic density theorems and large sieve inequalities to obtain classical zero density estimates for $L$-functions in various families of automorphic representations $\pi$ for $\GL(m)$. We start our discussion with a ``level aspect'' family. Let $m\geq 2$ be an integer, $q\geq 1$ an integer, and $\Gamma_0(q)$ the level $q$ congruence subgroup of $\SL_m(\Z)$, where the last row is congruent to $(0, \ldots, 0, *)$ modulo $q$.  Let $\mathcal{I}\subseteq(0,\infty)$ be a fixed interval of length at least $1$, and let $\mathcal{F}(q) = \mathcal{F}_{\mathcal{I},m}(q)$ be the family of cuspidal automorphic representations generated by Hecke--Maa{\ss} newforms for $\Gamma_0(q)$ whose Laplace eigenvalue lies in $\mathcal{I}$.  Whenever we talk about the family $\mathcal{F}(q)$, all implicit and explicit constants may depend on $\mathcal{I}$ and $m$, which we regard as fixed once and for all.  Let
$$V(q) := [{\rm SL}_m(\Z) : \Gamma_0(q)] = q^{m-1}\prod_{p \mid q} \Big(1 + \frac{1}{p} + \cdots + \frac{1}{p^{m-1}}\Big) \asymp \frac{q^m}{\varphi(q)},$$ 
where $\varphi$ is Euler's totient function.  By Weyl's law, we expect this to be the order of magnitude of $|\mathcal{F}(q)|$.

Let $L(s,\pi)$ be the standard $L$-function associated to $\pi$.  Given $\sigma\geq 0$ and $T\geq 1$, we define
\[
N_{\pi}(\sigma,T)=|\{\rho=\beta+i\gamma\colon \beta\geq \sigma,~|\gamma|\leq T,~L(\rho,\pi)=0\}|,
\]
where the zeros are counted with multiplicity. GRH predicts  $N_{\pi}(\sigma,T)=0$ for $\sigma > 1/2$.  As of now, it is not known whether there exists a constant $\delta>0$ such that each $L(s,\pi)$ for $\pi \in \mathcal{F}(q)$  does not vanish in the region $\re(s)\geq 1-\delta$. Our main result is a spectrally weighted zero density estimate which is strong in the $q$-aspect and at most polynomial in the $T$-aspect.

\begin{theorem}
\label{thm:ZDE}
Let $q\geq 2$ be an integer, $T\geq 2$, and $\sigma\geq 1/2$.  If $0<B<1$, then
\[
\frac{1}{V(q)} \sum_{\pi\in\mathcal{F}(q)}\frac{N_{\pi}(\sigma,T)}{L(1,\pi,\Ad)}\ll_{B} T^{m} q^{-B(\sigma-\frac{1}{2})}F(q)(\log q)^5.
\]
The function $F(q)$ is defined in \eqref{eqn:fq} below and satisfies $F(q) \ll 2^{\omega(q)}$, where $\omega(q)$ is the number of distinct prime factors of $q$. 
\end{theorem}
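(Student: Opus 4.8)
The plan is to transform the zero count into the mean value over $\mathcal{F}(q)$ of a mollified Dirichlet polynomial and to estimate that mean value by the relative-trace-formula large sieve (and its underlying orthogonality relation) established earlier in the paper. Fix $\pi\in\mathcal{F}(q)$; since the Laplace eigenvalue lies in the fixed interval $\mathcal{I}$, the analytic conductor is $\mathfrak{q}(\pi,\rho)\asymp q(1+|\gamma|)^m$ for $\rho=\beta+i\gamma$. Set $Y:=q^{B/2}$, let $M_Y(s,\pi):=\sum_{d\le Y}\mu_\pi(d)d^{-s}$ be the truncation of $1/L(s,\pi)$, and note that the coefficients $b_\pi(n):=\sum_{de=n,\,d\le Y}\mu_\pi(d)\lambda_\pi(e)$ of $M_Y(s,\pi)L(s,\pi)$ satisfy $b_\pi(n)=\mathbbm{1}_{n=1}$ for $n\le Y$. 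For a zero $\rho=\beta+i\gamma$ with $\beta\ge\sigma\ge\tfrac12$ and $|\gamma|\le T$, feed the vanishing $M_Y(\rho,\pi)L(\rho,\pi)=0$ into a smoothed approximate functional equation balanced at $\sqrt{\mathfrak{q}(\pi,\rho)}$. This writes $0$ as the constant $b_\pi(1)=1$ plus a principal sum $\mathcal{P}_\pi(\rho)=\sum_{Y<n}b_\pi(n)n^{-\rho}V_1(n/\cdot)$ of effective length $\ll\sqrt{\mathfrak{q}(\pi,\rho)}\,Y\ll q^{(1+B)/2}T^{m/2+\varepsilon}$, plus a dual sum $\mathcal{D}_\pi(\rho)$ of the same length carrying the gamma-factor/root-number ratio of modulus $\mathfrak{q}(\pi,\rho)^{1/2-\beta}\le\mathfrak{q}(\pi,\rho)^{-(\sigma-1/2)}$; hence $1\ll|\mathcal{P}_\pi(\rho)|+|\mathcal{D}_\pi(\rho)|$.

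Next I would extract the saving. In $\mathcal{P}_\pi(\rho)$, decompose dyadically; on a block $n\asymp N'$ absorb the factor $(n/N')^{1/2-\beta}V_1(n/\cdot)$ into a smooth bounded weight $W$ whose derivatives are bounded uniformly in $\beta\in[\tfrac12,1]$, so that the block equals $(N')^{1/2-\beta}$ times $\mathcal{S}_\pi^{(N')}(\tfrac12+i\gamma)$ with $\mathcal{S}_\pi^{(N')}(s)=\sum_{n\asymp N'}b_\pi(n)W(n/N')n^{-s}$. Since $N'>Y$ and $\beta\ge\sigma$ we have $(N')^{1/2-\beta}\le Y^{-(\sigma-1/2)}$, and summing the $O(\log q)$ blocks gives $|\mathcal{P}_\pi(\rho)|\ll Y^{-(\sigma-1/2)}(\log q)^{O(1)}\max_{N'}|\mathcal{S}_\pi^{(N')}(\tfrac12+i\gamma)|$, with $N'\ll q^{(1+B)/2}T^{m/2+\varepsilon}$. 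For $\mathcal{D}_\pi(\rho)$ the same manipulation produces the small prefactor $\mathfrak{q}(\pi,\rho)^{1/2-\beta}\cdot(\sqrt{\mathfrak{q}(\pi,\rho)})^{\beta-1/2}=\mathfrak{q}(\pi,\rho)^{-(\beta-1/2)/2}\le\mathfrak{q}(\pi,\rho)^{-(\sigma-1/2)/2}$ times a central-line Dirichlet polynomial $\mathcal{E}_\pi(\tfrac12+i\gamma)$ whose coefficients, built from the $\mu_\pi(d)$ ($d\le Y$) and the $\lambda_{\tilde\pi}(n)$ ($n\ll\sqrt{\mathfrak q(\pi,\rho)}$), are essentially those of a weighted truncation of $L(\cdot,\tilde\pi)/L(\cdot,\pi)$. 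Thus each zero contributes at most $Y^{-2(\sigma-1/2)}(\log q)^{O(1)}|\mathcal{S}_\pi^{(N')}(\tfrac12+i\gamma)|^2$ (for some $N'$) plus $\mathfrak{q}(\pi,\rho)^{-(\sigma-1/2)}(\log q)^{O(1)}|\mathcal{E}_\pi(\tfrac12+i\gamma)|^2$.

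Now pass to a mean value. Since $L(s,\pi)$ has $\ll m\log(q(2+|\gamma|))$ zeros in each unit box, $N_\pi(\sigma,T)\ll|\mathcal{R}_\pi|$ for a maximal $\tfrac1{\log qT}$-separated subset $\mathcal{R}_\pi$ of the zeros it counts; Gallagher's lemma applied to each $\mathcal{S}_\pi^{(N')}$ and to $\mathcal{E}_\pi$ (Dirichlet polynomials of length a fixed power of $qT$, so that $\int|(\,\cdot\,)'|^2\ll(\log qT)^2\int|\,\cdot\,|^2$) then bounds $\sum_{\gamma\in\mathcal{R}_\pi}|\,\cdot\,(\tfrac12+i\gamma)|^2$ by $\ll(\log qT)\int_{-T-1}^{T+1}|\,\cdot\,(\tfrac12+it)|^2\,dt$. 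Summing over $\pi\in\mathcal{F}(q)$ with the weight $L(1,\pi,\Ad)^{-1}$ turns the first integrand into the mollified second moment $\sum_\pi L(1,\pi,\Ad)^{-1}|M_Y(\tfrac12+it,\pi)L(\tfrac12+it,\pi)|^2$ (restricted to the relevant length), which by the orthogonality relation/large sieve for $\mathcal{F}(q)$ together with the Rankin--Selberg bounds for $\pi\times\tilde\pi$ (exactly what the harmonic weight $L(1,\pi,\Ad)^{-1}$ absorbs) is $\ll F(q)V(q)(\log q)^{O(1)}$: the main term survives because the mollifier annihilates the size of $L$, leaving $b_\pi(1)=1$, and the off-diagonal is negligible precisely because the total length $\ll q^{(1+B)/2}T^{m/2+\varepsilon}$ lies below the large-sieve threshold $V(q)$ since $B<1$. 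The second integrand is handled identically, its mean square over the family being $\ll F(q)V(q)(\log q)^{O(1)}$ as well (the truncation of $L(\cdot,\tilde\pi)/L(\cdot,\pi)$ being bounded in mean square, again by orthogonality together with the near-cancellation of $1/L(s,\pi)$ against $L(s,\pi)$), and it is multiplied by $\mathfrak{q}(\pi,\rho)^{-(\sigma-1/2)}\le q^{-(\sigma-1/2)}$; the residual $t$-integration contributes at worst the stated power $T^m$. Combining everything,
\[
\frac1{V(q)}\sum_{\pi\in\mathcal{F}(q)}\frac{N_\pi(\sigma,T)}{L(1,\pi,\Ad)}\ll\big(Y^{-2(\sigma-\frac12)}+q^{-(\sigma-\frac12)}\big)F(q)\,T^m(\log q)^{O(1)}\ll q^{-B(\sigma-\frac12)}F(q)\,T^m(\log q)^{O(1)},
\]
and a careful accounting of the logarithmic factors (from the zero-counting, Gallagher's lemma, the divisor sums in the mollifier, and the large sieve) yields the exponent $5$.

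The main obstacle is making the two mollified moments genuinely uniform over the family, and it is two-fold. First, one cannot replace the functional equation by convexity: convexity alone forces the detector to have length $\gg\mathfrak{q}(\pi,\rho)^{1+\varepsilon}Y$, already over the large-sieve threshold $V(q)$ when $m=2$, whereas the length $\sqrt{\mathfrak{q}(\pi,\rho)}$ coming from the functional equation is exactly what makes $q^{(1+B)/2}T^{m/2+\varepsilon}\ll V(q)$ possible for $B<1$ --- the case $m=2$ (where $V(q)\asymp q$) being the one that pins the admissible range of $B$. Second, the mollifier coefficients $\mu_\pi(d)$ are $\pi$-dependent; controlling the resulting bilinear forms in $\mu_\pi$ and $\lambda_\pi$ over $\mathcal{F}(q)$ --- separating the diagonal, where $1/L(s,\pi)$ collapses against $L(s,\pi)$, from the off-diagonal, and absorbing $L(1,\pi,\Ad)$ via Rankin--Selberg --- requires the $\mathrm{GL}(m)$ Hecke combinatorics (using that $\mu_\pi(p^k)$ is a fixed polynomial in $\lambda_\pi(p),\dots,\lambda_\pi(p^k)$) and the orthogonality relation to be pushed with some care, and likewise for the dual sum, where one must verify that the root-number saving $\mathfrak{q}(\pi,\rho)^{1/2-\beta}$ is not eroded by the accompanying polynomial. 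The remaining steps --- counting zeros in short intervals, Gallagher's lemma, the dyadic bookkeeping, and the optimization $Y=q^{B/2}$ --- are routine.
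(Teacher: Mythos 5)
Your proposal follows a genuinely different route from the paper's, and it is worth spelling out where the two diverge because the divergence matters technically.

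The paper does \emph{not} invoke the approximate functional equation of $L(s,\pi)$ at the zero, and it does \emph{not} square the mollified $L$-function. Instead it takes a long mollifier $M_X$ with $X=C_1 q$ (as long as the large-sieve threshold allows), detects a zero $\rho$ via the $\Gamma$-kernel identity $e^{-1/Y}=\frac{1}{2\pi i}\int(1-LM_X)\Gamma(w)Y^w\,dw+\frac{1}{2\pi i}\int LM_X\,\Gamma(w)Y^w\,dw$ with $Y=q^{1-\delta}$, shifts the second integral to $\re(w)=\frac12-\beta$ to pick up $Y^{1/2-\sigma}$, and thereby reduces $N_\pi(\sigma,T)$ to a \emph{first} moment $\int|LM_X(\tfrac12+iv,\pi)|\,dv$. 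The crucial move is then Cauchy--Schwarz \emph{over $\pi$}: $\sum_\pi|LM_X|/L(1,\pi,\Ad)\le\bigl(\sum_\pi|L|^2/L(1,\pi,\Ad)\bigr)^{1/2}\bigl(\sum_\pi|M_X|^2/L(1,\pi,\Ad)\bigr)^{1/2}$, so that the large sieve (\cref{thm:MVTgeneral}) is only ever applied to a Dirichlet polynomial whose coefficients are a \emph{single} Fourier-coefficient object ($\lambda_\pi(n)$ for $|L|^2$ via \cref{cor:2ndmoment}, and $\mu_\pi(n)$ for $|M_X|^2$ via \cref{cor:MVT2}).

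Your route squares the detector ($1\ll|\mathcal P_\pi(\rho)|^2+|\mathcal D_\pi(\rho)|^2$) in order to pass through Gallagher's lemma, and this commits you to bounding $\sum_\pi L(1,\pi,\Ad)^{-1}|M_Y(\tfrac12+it,\pi)L(\tfrac12+it,\pi)|^2$ (restricted to a range), a genuine second moment of the \emph{product}. The coefficients $b_\pi(n)=\sum_{de=n,\,d\le Y}\mu_\pi(d)\lambda_\pi(e)$ are not of the form $\beta(\mathbf n)B_\pi(\mathbf n)$ with $\pi$-independent $\beta$; to feed them into \cref{thm:MVTgeneral} you must expand $\mu_\pi(d)\lambda_\pi(e)$ at each prime in the Schur-polynomial basis via the $\GL_m$ Hecke relations (Pieri/Littlewood--Richardson), re-collect the $\pi$-independent coefficients, and check that all the resulting weights $w(\mathbf n)$ stay below the threshold $\Cr{Blomer1}q$. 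You flag this as a subtlety, but you do not resolve it, and it is precisely the hard part --- the paper's Cauchy--Schwarz-over-$\pi$ step exists to \emph{avoid} having to do it. The dual sum is worse still: its coefficients mix $\mu_\pi(d)$ with $\lambda_{\widetilde\pi}(e)$, so one needs a large sieve for a $\pi,\widetilde\pi$-bilinear object. Nothing in \cref{sec3} delivers that. A second, smaller point: your argument imports the full approximate functional equation with root number and gamma-factor ratio $X(\rho)$, and extracting the $\mathfrak q(\pi,\rho)^{-(\beta-1/2)/2}$ saving from the dual piece requires a partial-summation step in $\beta$ that interacts with the smooth cutoff $V_1$; this is routine but it is an additional technical layer the paper does not need, since the $\Gamma$-kernel detector produces the saving $Y^{1/2-\sigma}$ from the contour shift alone with no dual sum at all.

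In summary: your strategy is sound in outline and would likely produce a bound of the same shape if the bilinear mean squares were carried out, but the central claim --- that $\sum_\pi L(1,\pi,\Ad)^{-1}|M_Y L|^2\ll F(q)V(q)(\log q)^{O(1)}$ follows from ``the orthogonality relation together with the near-cancellation of $1/L$ against $L$'' --- is exactly the step that needs a real proof and does not follow directly from \cref{thm:MVTgeneral}. The paper's design choice (long mollifier, $\Gamma$-kernel detector, first moment, Cauchy--Schwarz over the family) is what makes the argument close with only the single-coefficient large sieve inequalities actually proved in \cref{sec3}; if you want to salvage your version, you should either establish the bilinear large sieve for $b_\pi(n)$ explicitly, or switch to the paper's first-moment-plus-Cauchy--Schwarz arrangement, after which the AFE and the dual sum become unnecessary.
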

\begin{remark}
See \eqref{eqn:F(q)_bound} below for the maximal order of $F(q)$.
\end{remark}

The weight $L(1,\pi,\Ad)^{-1}$ is reminiscent of the  Kuznetsov formula. The bound \eqref{eqn:average_weight} below shows that on average $L(1,\pi,\Ad)^{-1}$ is $\ll 1$.  While it is a very hard problem (associated with Landau--Siegel zeros) to obtain individual lower bounds for $L(1,\pi,\Ad)$, relatively strong upper bounds for $L(1,\pi,\Ad)$ are available. In particular, it is much easier to remove the weights than to insert them (should this be required for a certain application). It follows from work of Li \cite{Li} and Bushnell and Henniart \cite{BH} that there exists an effectively computable constant $\Cl[abcon]{unweighted}>0$ such that if
\[
R_{\mathcal{F}}(q)=\max_{\pi\in\mathcal{F}(q)}L(1,\pi,\Ad),
\]
then
\begin{equation}
\label{eqn:Li_Ad}
R_{\mathcal{F}}(q)\ll \exp\Big(\Cr{unweighted}\frac{\log q}{\log\log q}\Big).
\end{equation}
If the generalized Ramanujan conjecture holds for all $\pi\in\mathcal{F}(q)$, we have the stronger bound $R_{\mathcal{F}}(q) \ll (\log q)^{m}$. With this in mind, \cref{thm:ZDE} immediately implies the following unweighted zero density estimate.

\begin{corollary}
\label{cor:ZDE}
 Let $q\geq 2$ be an integer, $T \geq 2$, and $\sigma\geq 1/2$.  If $0<B<1$, then
\[
\frac{1}{V(q)} \sum_{\pi\in\mathcal{F}(q)} N_{\pi}(\sigma,T)\ll_{B} T^{m} q^{-B(\sigma-\frac{1}{2})}R_{\mathcal{F}}(q)F(q)(\log q)^5.
\]
In particular, if $\delta>0$, then for all $\pi\in\mathcal{F}(q)$ with $O(V(q)[T^m R_{\mathcal{F}}(q)F(q)(\log q)^5]^{-\delta})$ exceptions, any nontrivial zero $\beta+i\gamma$ of $L(s,\pi)$ with $|\gamma|\leq T$ satisfies
\[
 \Big|\beta-\frac{1}{2}\Big|\leq \frac{1+\delta}{B}\cdot\frac{\log[R_{\mathcal{F}}(q)F(q)(\log q)^5 T^m]}{\log q}.
\]
\end{corollary}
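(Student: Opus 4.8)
The plan is to deduce both assertions from \cref{thm:ZDE} directly, with the functional equation as the only extra ingredient. For the first displayed bound, simply note that $N_\pi(\sigma,T)\ge 0$ and, by the definition of $R_{\mathcal F}(q)$, that $L(1,\pi,\Ad)\le R_{\mathcal F}(q)$ for every $\pi\in\mathcal F(q)$; hence
\[
\frac{1}{V(q)}\sum_{\pi\in\mathcal F(q)}N_\pi(\sigma,T)\le\frac{R_{\mathcal F}(q)}{V(q)}\sum_{\pi\in\mathcal F(q)}\frac{N_\pi(\sigma,T)}{L(1,\pi,\Ad)},
\]
and the claimed estimate is immediate from \cref{thm:ZDE}.

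For the statement about exceptions, abbreviate $G=G(q):=R_{\mathcal F}(q)F(q)(\log q)^5 T^m$ and put
\[
\sigma_0:=\frac12+\frac{1+\delta}{B}\cdot\frac{\log G}{\log q}.
\]
One may assume $\tfrac12<\sigma_0<1$, the remaining cases being vacuous: if $\sigma_0\ge 1$ there are no zeros of $L(s,\pi)$ with $\re(s)\ge 1$, hence no exceptions, and $|\beta-\tfrac12|<\tfrac12\le\sigma_0-\tfrac12$ for all $\pi$; and if $\sigma_0\le\tfrac12$ then $G\le 1$, so the allowed number of exceptions is $\gg V(q)\gg|\mathcal F(q)|$ and there is nothing to prove. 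Applying the first part with $\sigma=\sigma_0$ and using $B(\sigma_0-\tfrac12)=(1+\delta)\log G/\log q$, so that $q^{-B(\sigma_0-1/2)}=G^{-(1+\delta)}$, gives
\[
\sum_{\pi\in\mathcal F(q)}N_\pi(\sigma_0,T)\ll_B V(q)\cdot G\cdot G^{-(1+\delta)}=V(q)\,G^{-\delta}.
\]
Since $N_\pi(\sigma_0,T)$ is a nonnegative integer, the set $\mathcal E$ of those $\pi\in\mathcal F(q)$ for which $L(s,\pi)$ has a zero with $\beta\ge\sigma_0$ and $|\gamma|\le T$ satisfies $|\mathcal E|\le\sum_\pi N_\pi(\sigma_0,T)\ll_B V(q)G^{-\delta}$, the claimed count of exceptions.

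It remains to check that for $\pi\notin\mathcal E$ every nontrivial zero $\rho=\beta+i\gamma$ with $|\gamma|\le T$ satisfies $|\beta-\tfrac12|\le\sigma_0-\tfrac12$. The inequality $\beta<\sigma_0$ is precisely the statement $\pi\notin\mathcal E$. For the matching lower bound I would invoke the functional equation relating $L(s,\pi)$ and $L(1-s,\tilde\pi)$: since the archimedean factor is nonvanishing in the critical strip, $1-\rho$ is a nontrivial zero of $L(s,\tilde\pi)$, and then $L(s,\tilde\pi)=\overline{L(\bar s,\pi)}$ shows that $1-\bar\rho=(1-\beta)+i\gamma$ is again a nontrivial zero of $L(s,\pi)$ with the same imaginary part; applying $\pi\notin\mathcal E$ to this zero yields $1-\beta<\sigma_0$. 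Combining the two inequalities gives $|\beta-\tfrac12|<\sigma_0-\tfrac12=\frac{1+\delta}{B}\cdot\frac{\log[R_{\mathcal F}(q)F(q)(\log q)^5 T^m]}{\log q}$, which is the asserted bound. The only step requiring any care is this reflection---one must note that the reflected zero lies on $L(s,\pi)$ itself rather than on $L(s,\tilde\pi)$, so that the exceptional set is governed by $N_\pi$ alone and no closure of $\mathcal F(q)$ under $\pi\mapsto\tilde\pi$ is needed; this is immediate from complex conjugation. Beyond that the argument is just bookkeeping with the explicit value of $\sigma_0$, and I do not expect a genuine obstacle.
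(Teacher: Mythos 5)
Your proof is correct and takes exactly the approach the paper intends when it says \cref{thm:ZDE} ``immediately implies'' \cref{cor:ZDE}: bound $L(1,\pi,\Ad)\leq R_{\mathcal F}(q)$ to pass to the unweighted sum, then specialize $\sigma=\sigma_0$ so that $q^{-B(\sigma_0-1/2)}=G^{-(1+\delta)}$ and count exceptions by $\sum_\pi N_\pi(\sigma_0,T)$, invoking the reflection $\rho\mapsto 1-\bar\rho$ (from the functional equation together with $L(s,\tilde\pi)=\overline{L(\bar s,\pi)}$) to convert the one-sided bound $\beta<\sigma_0$ into the two-sided $|\beta-\tfrac12|\leq\sigma_0-\tfrac12$. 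Your observation that the reflected zero lies on $L(s,\pi)$ itself, so that no closure of $\mathcal F(q)$ under contragredient is needed, is the right point to flag, and the edge-case handling is fine.
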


As mentioned earlier, the proof of Theorem \ref{thm:ZDE} relies among other things on 
 fairly general large sieve inequalities on $\GL(m)$, which we proceed to describe. For $\pi \in \mathcal{F}(q)$, let $B_{\pi}(n_{m-1}, \ldots, n_1)$ denote the arithmetically normalized Fourier coefficient. In particular, for a prime $p \nmid q$, we have the following representation in terms of Schur polynomials in terms of the Satake parameters $\alpha_{j, \pi}(p)$ (see\ Section \ref{sec:L-functions} for details):
\begin{equation}\label{eqn:schur}
B_{\pi}(p^{a_{m-1}}, \ldots, p^{a_{1}}) = \frac{\det[\alpha_{j,\pi}(p)^{a_{m-1} + \cdots + a_{k} + m-k}]_{jk} }{ \det[\alpha_{j,\pi}(p)^{m-k}]_{jk}}.
\end{equation}
Let $\mathbb{N}$ be the set of positive integers.  For $\textbf{n} = (n_{m-1}, \ldots, n_{1}) \in \Bbb{N}^{m-1}$, we define
$$w(\textbf{n}) = n_1 n_2^2 \ldots   n_{m-1}^{m-1},$$
and we write $(\textbf{n}, q) = 1$ to mean that all $n_j$ are coprime to $q$. With this notation, we can state the following result of independent interest which generalizes \cite[Theorem 4]{Blomer_density}. 
\begin{theorem}
\label{thm:MVTgeneral}
Let $q\geq 2$ be an integer and $\beta\colon \mathbb{N}^{m-1}\to\mathbb{C}$ a function.  There exists a constant $\Cl[abcon]{Blomer1}\in(0,1)$ such that if $1\leq x\leq \Cr{Blomer1}q$, then
\[
\frac{1}{V(q)} \sum_{\pi\in\mathcal{F}(q)}\frac{1}{L(1,\pi,\Ad)}\Big|\sum_{\substack{w(\textbf{n})\leq x \\ (\textbf{n},q)=1}}\beta(\textbf{n})B_{\pi}(\textbf{n})\Big|^2\ll \sum_{\substack{w(\textbf{n}) \leq x \\ (\textbf{n},q)=1}}|\beta(\textbf{n})|^2.
\]
\end{theorem}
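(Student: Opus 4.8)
The plan is to reduce the large sieve inequality to a Petersson/Kuznetsov-type spectral summation formula on $\GL(m)$, following the strategy of \cite{Blomer_density} but carried out for general $m$. First I would expand the square, so that the left-hand side becomes
\[
\sum_{\substack{w(\mathbf{m})\leq x,\ w(\mathbf{n})\leq x\\ (\mathbf{m}q)=(\mathbf{n},q)=1}}\beta(\mathbf{m})\overline{\beta(\mathbf{n})}\cdot\frac{1}{V(q)}\sum_{\pi\in\mathcal{F}(q)}\frac{B_{\pi}(\mathbf{m})\overline{B_{\pi}(\mathbf{n})}}{L(1,\pi,\Ad)}.
\]
The weight $L(1,\pi,\Ad)^{-1}$ is precisely the factor that appears in the spectral measure of the $\GL(m)$ Kuznetsov formula, so the inner average should be evaluated (or bounded from above) by that formula: the diagonal term contributes when $\mathbf{m}=\mathbf{n}$ (using that the $B_\pi(\mathbf{n})$ are a Hecke-normalized orthonormal-type system), giving $\sum_{w(\mathbf{n})\leq x,\ (\mathbf{n},q)=1}|\beta(\mathbf{n})|^2$, which is exactly the desired bound. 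The off-diagonal terms involve Kloosterman-type sums attached to the Weyl group of $\GL(m)$, twisted by the archimedean test function adapted to the spectral window $\mathcal{I}$; the main point is to show these contribute an admissible error.

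The key steps, in order, would be: (i) choose a non-negative archimedean test function $h$ that majorizes the indicator of the spectral window $\{\text{Laplace eigenvalue}\in\mathcal{I}\}$ and has suitable decay, so that the spectrally-weighted average over $\mathcal{F}(q)$ is $\leq$ the full Kuznetsov sum against $h$; (ii) apply the $\GL(m)$ Kuznetsov/relative trace formula — here one invokes the recent machinery referenced in the introduction (Blomer's density theorem and the underlying Kuznetsov formula for $\SL_m(\Z)$ and its congruence subgroups) — to write the geometric side as a main (identity Weyl element) term plus a sum over non-trivial Weyl elements $w$; (iii) check that the identity term yields exactly $\asymp \sum_{w(\mathbf{n})\leq x}|\beta(\mathbf{n})|^2$ after accounting for the Hecke relations and the coprimality condition $(\mathbf{n},q)=1$, matching the claimed right-hand side up to the implied constant; (iv) bound the non-trivial Weyl element contributions. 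For step (iv) one uses that each such term carries a modulus divisible by $q$ (because we restrict to $\Gamma_0(q)$ and to $\mathbf{n}$ coprime to $q$), together with the constraint $w(\mathbf{m}),w(\mathbf{n})\leq x$; since $w(\mathbf{n})$ encodes the sizes $n_1 n_2^2\cdots n_{m-1}^{m-1}$, the relevant hyper-Kloosterman sums and their Weyl-element analogues are supported on moduli $c\geq q$ while the "length" of the sum is controlled by $x$, so choosing $x\leq \Cr{Blomer1}q$ with $\Cr{Blomer1}$ small enough forces the off-diagonal range to be empty or negligible. This is where the hypothesis $1\leq x\leq\Cr{Blomer1}q$ is used in an essential way, and it is the analogue of the classical condition $MN\leq q$ in the multiplicative large sieve.

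The main obstacle I anticipate is step (iv): controlling the full zoo of Weyl-group contributions in the $\GL(m)$ Kuznetsov formula uniformly in $q$ and in the multi-index ranges. For $\GL(2)$ and $\GL(3)$ these are explicit (ordinary and hyper-Kloosterman sums with known moduli structure), but for general $m$ one must track, for each Weyl element $w$, the compatibility conditions between the moduli, the arguments $\mathbf{m},\mathbf{n}$, and the $\Gamma_0(q)$-level, and show that no such configuration survives when $w(\mathbf{m}),w(\mathbf{n})\leq\Cr{Blomer1}q$. A clean way to organize this is to appeal directly to the large sieve / density-theorem input already developed in the literature (and cited in the introduction) rather than re-deriving the Kuznetsov formula: one shows that the specific bilinear form here is dominated by the "trivial" range of that machinery. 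A secondary technical point is the passage from the smooth spectral weight $h$ back to the sharp cutoff $\lambda_\pi\in\mathcal{I}$: since all terms on the spectral side are non-negative (the $B_\pi(\mathbf{n})$ being real for Hecke--Maa{\ss} forms and the diagonal $|B_\pi(\mathbf{n})|^2\geq 0$, one may need to symmetrize $\mathbf{m},\mathbf{n}$ or pass to $|\sum\beta(\mathbf{n})B_\pi(\mathbf{n})|^2\geq 0$ directly), majorization by $h$ is legitimate and loses only a constant depending on $\mathcal{I}$ and $m$, which is absorbed into the implied constant.
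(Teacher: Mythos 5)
Your overall strategy matches the paper's: extend by positivity from $\mathcal{F}(q)$ to the full spectral expansion at level $q$, apply the $\GL(m)$ Kuznetsov/relative trace formula, read off the diagonal $\sum |\beta(\textbf{n})|^2$ from the identity Weyl element, and argue that the non-identity Weyl elements vanish because their moduli are divisible by $q$ while simultaneously being bounded in terms of $x$, which is a contradiction once $x \leq \Cr{Blomer1} q$ with $\Cr{Blomer1}$ small.

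The gap is in step (iv), and it is exactly where you expect trouble. You propose to defer the off-diagonal bound to ``the large sieve / density-theorem input already developed in the literature,'' but the relevant reference, \cite[Theorem 4]{Blomer_density}, handles only linear forms $\sum_n \alpha(n)\lambda_\pi(n)$ with $\lambda_\pi(n) = B_\pi(1,\ldots,1,n)$; it does not cover general Fourier coefficients $B_\pi(\textbf{n})$, which is precisely why Theorem \ref{thm:MVTgeneral} is a new result (the paper says so explicitly in Section \ref{sec3}). The genuine content is the verification that for every non-identity admissible Weyl element $w$ (block anti-diagonal with blocks $I_{d_1},\ldots,I_{d_r}$), the compact support of the test function forces $c_j \ll_E \prod_i B_i^{s(i,j)}$ with $B$ the componentwise product of $N$ and the Weyl-conjugate of $\tilde N$, and then an explicit computation of the coordinates of that Weyl-conjugate in terms of $\tilde n_1,\ldots,\tilde n_{m-1}$ and the block sizes $d_1,\ldots,d_r$, leading to $c_j \ll w(\textbf{n})^{(m-j)/m} w(\tilde{\textbf{n}})^{j/m} \leq c_0 q$. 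This multiplicative bookkeeping is the new calculation and cannot be outsourced: without it, your claim that ``the `length' of the sum is controlled by $x$'' in the off-diagonal range is exactly what needs to be established, not a fact that follows from the cited machinery. Pairing this with the divisibility $q \mid c_1$ (valid for every $w \neq \mathrm{id}$, since then $d_1 < m$) closes the argument, and only then does the choice $\Cr{Blomer1} = \min_j c_0(E_j)$ over the finite collection of test functions become available.

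A minor point: the Fourier coefficients $B_\pi(\textbf{n})$ of Hecke--Maa{\ss} forms on $\GL(m)$ are not in general real for $m \geq 3$, so the parenthetical justification in your last paragraph should be dropped; positivity of $|\sum \beta(\textbf{n}) B_\pi(\textbf{n})|^2$ and of $|\langle W_\varpi, E\rangle|^2$ is all that is needed, as you also note.
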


The proof is an application of the $\GL(m)$ Kuznetsov formula. 
In Section \ref{sec3}, we will apply this in a number of concrete cases related to linear forms in Dirichlet coefficients of $L(s, \pi)^{-1}$ and $L'/L(s, \pi)$ that cannot be expressed in terms of standard Hecke eigenvalues $\lambda_{\pi}(n) = B_{\pi}(1, \ldots, 1, n)$, so that for instance  \cite[Theorem 4]{Blomer_density} would not be applicable.

\medskip

The analytic technique described in Section \ref{sec:ZDE} can also be applied to other families if large sieve inequalities analogous to \cref{thm:MVTgeneral} are available.  For example, following Jana \cite{Jana_newvectors}, we consider the ``spectral'' family $\mathcal{G}(Q)$ of  level $1$  cuspidal automorphic representations $\pi$ of $\mathrm{PGL}_m(\Z)$ with analytic conductor $C(\pi)\leq Q$, with $C(\pi)$ as defined by Iwaniec and Sarnak \cite{IS}, cf.\ \eqref{eqn:analytic_conductor_def} below. Again, we regard $m$ as fixed and all explicit and implicit constants may depend on $m$.  The same method of proof shows the following. 
\begin{theorem}
\label{thm:ZDE2}
Let $Q, T\geq 2$ and $\sigma\geq 1/2$.  There exists an effectively computable constant $\Cl[abcon]{TRange_Jana2}>0$ such that if $0<B<1$, then
\[
\frac{1}{ |\mathcal{G}(Q)|} \sum_{\pi\in\mathcal{G}(Q)}\frac{N_{\pi}(\sigma,T)}{L(1,\pi,\Ad)}\ll_{B} T^{m}Q^{-B(\sigma-\frac{1}{2})}(\log Q)^5
\]
and
\[
\frac{1}{ |\mathcal{G}(Q)|} \sum_{\pi\in\mathcal{G}(Q)} N_{\pi}(\sigma,T) \ll_{B}  T^{m}Q^{-B(\sigma-\frac{1}{2})}R_{\mathcal{G}}(Q)(\log Q)^5,
\]
where
\begin{equation}
\label{eqn:Li_Ad_spectral}
R_{\mathcal{G}}(Q) = \max_{\pi\in\mathcal{G}(Q)}L(1,\pi,\Ad) \ll \exp\Big(\Cr{TRange_Jana2}\frac{\log Q}{\log\log Q}\Big).
\end{equation}
\end{theorem}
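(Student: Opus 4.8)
The plan is to transport the proof of \cref{thm:ZDE} to the conductor-aspect family, replacing the level modulus $q$ by $Q$, the count $V(q)$ by $|\mathcal{G}(Q)|$, and the level-aspect large sieve \cref{thm:MVTgeneral} by a spectral analogue. Thus the one genuinely new ingredient is the following substitute for \cref{thm:MVTgeneral}: there is a constant $c>0$ such that for every $\beta\colon\mathbb{N}^{m-1}\to\mathbb{C}$ and every $1\le x\le cQ$,
\[
\frac{1}{|\mathcal{G}(Q)|}\sum_{\pi\in\mathcal{G}(Q)}\frac{1}{L(1,\pi,\Ad)}\Big|\sum_{w(\mathbf{n})\le x}\beta(\mathbf{n})B_{\pi}(\mathbf{n})\Big|^{2}\ll\sum_{w(\mathbf{n})\le x}|\beta(\mathbf{n})|^{2}.
\]
Following Jana \cite{Jana_newvectors}, this should follow from the $\GL(m)$ Kuznetsov/relative trace formula attached to the spectral family: one opens the square, inserts the formula with an archimedean test function localizing to $C(\pi)\le Q$, and shows that once $x$ lies below a fixed multiple of $Q$ the geometric side is dominated by its diagonal term $\sum_{w(\mathbf{n})\le x}|\beta(\mathbf{n})|^{2}$, the surviving Kloosterman-type contributions being negligible. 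The weight $L(1,\pi,\Ad)^{-1}$ is exactly the Kuznetsov normalization; choosing $\beta$ supported at $\mathbf{n}=(1,\dots,1)$ recovers, as for $\mathcal{F}(q)$, both the average bound $\sum_{\pi\in\mathcal{G}(Q)}L(1,\pi,\Ad)^{-1}\asymp|\mathcal{G}(Q)|$ and, by Weyl's law, the order of magnitude of $|\mathcal{G}(Q)|$.

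Granting this, the argument of \cref{sec:ZDE} applies essentially verbatim. Each $L(s,\pi)$ with $\pi\in\mathcal{G}(Q)$ has a uniform approximate functional equation of conductor $\ll Q$, and its nontrivial zeros $\beta+i\gamma$ with $|\gamma|\le T$ are $O(\log(Q(|\gamma|+2)))$-dense. For such a zero $\rho=\beta+i\gamma$ with $\beta\ge\sigma\ge\tfrac12$ one multiplies $L(\rho,\pi)$ by a mollifier $\sum_{n\le Q^{\eta}}\mu_{\pi}(n)n^{-\rho}$ with $\eta>0$ small, $\mu_{\pi}(n)$ being the Dirichlet coefficients of $L(s,\pi)^{-1}$; expanding and using the Schur/Littlewood--Richardson identities that rewrite products $\lambda_{\pi}(a)\mu_{\pi}(b)$ as linear combinations of $B_{\pi}(\mathbf{n})$ with $w(\mathbf{n})$ a small power of $Q$ (precisely the coefficients discussed after \cref{thm:MVTgeneral}), one deduces that some short Dirichlet polynomial in the $B_{\pi}(\mathbf{n})$ is $\gg 1$ in absolute value. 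Passing to a well-spaced subset of the zeros and bounding the resulting mean square by the spectral large sieve above produces the saving $Q^{-B(\sigma-\frac12)}$ for any $0<B<1$, the factors $T^{m}$ and $(\log Q)^{5}$ arising as in the proof of \cref{thm:ZDE} from the height dependence of the approximate functional equation and from divisor-type and zero-density factors, respectively. This yields the first displayed bound; the second follows by inserting the pointwise estimate $N_{\pi}(\sigma,T)\le R_{\mathcal{G}}(Q)\,N_{\pi}(\sigma,T)/L(1,\pi,\Ad)$.

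Finally, \eqref{eqn:Li_Ad_spectral} is the conductor-aspect version of \eqref{eqn:Li_Ad}: writing $L(1,\pi,\Ad)=\Res_{s=1}L(s,\pi\times\tilde\pi)$ and noting that $C(\pi)\le Q$ forces the analytic conductor of $\pi\times\tilde\pi$ to be $\ll Q^{O(1)}$, the bounds of Li \cite{Li} and Bushnell--Henniart \cite{BH} for $L(1,\pi\times\tilde\pi)$ give $R_{\mathcal{G}}(Q)\ll\exp(\Cr{TRange_Jana2}\log Q/\log\log Q)$ with $\Cr{TRange_Jana2}$ effectively computable. The main obstacle is the spectral large sieve itself: whereas for $\mathcal{F}(q)$ one has at hand the classical $\GL(m)$ Kuznetsov formula for $\Gamma_0(q)$, here one must use Jana's conductor-aspect formula built on the theory of new vectors and verify that its geometric side has the same shape---a clean diagonal plus Kloosterman terms that stay under control for $x$ up to a fixed positive power of $Q$---with the arithmetic weight $L(1,\pi,\Ad)^{-1}$ preserved. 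Once that is in place, the rest is a transcription of the level-aspect argument.
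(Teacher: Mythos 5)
Your proposal follows essentially the same route as the paper: replace the level-aspect large sieve (\cref{thm:MVTgeneral}) by Jana's conductor-aspect large sieve \cite[Theorem 5]{Jana_newvectors}, replay the zero-detection argument of Section \ref{sec:ZDE} with $Q$ in place of $q$ to obtain the analogues of \cref{cor:2ndmoment,cor:MVT2}, and combine via Cauchy--Schwarz to get the weighted bound, with the unweighted bound following from the conductor-aspect version of \eqref{eqn:Li_Ad} via Li and Bushnell--Henniart, exactly as you describe. The minor differences in phrasing (your ``well-spaced subset'' versus the paper's covering by $10\log Y$-boxes, and your mollifier length $Q^\eta$ versus the paper's $X\asymp Q$) are cosmetic and do not change the method.
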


\cref{thm:ZDE}, \cref{cor:ZDE}, and \cref{thm:ZDE2} show their strength for $\sigma$ close to $1/2$.  They complement work of Humphries and Thorner \cite[Theorem 1.1]{HT_density} which is tailored for  $\sigma$ close to 1: If $T\geq 1$, $\sigma\geq 1/2$, and $\epsilon>0$, then
\begin{equation}
\label{eqn:HT_GLn}
\sum_{\pi\in\mathcal{F}(q)}N_{\pi}(\sigma,T)\ll_{ \epsilon} (qT)^{7.15m^2(1-\sigma)+\epsilon},\qquad \sum_{\pi\in\mathcal{G}(Q)}N_{\pi}(\sigma,T)\ll_{\epsilon} (QT)^{7.15m^2(1-\sigma)+\epsilon}.
\end{equation}
It follows from \eqref{eqn:HT_GLn} (and a rescaling of $\epsilon$) that for all $\pi\in\mathcal{F}(q)$ with $O_{\epsilon}(q^{\epsilon})$ exceptions, $L(s,\pi)$ does not vanish when $\re(s)\geq 1-\epsilon/(9 m^2)$ and $|\im(s)|\leq q$.  A similar statement holds for $\mathcal{G}(Q)$.

\subsection{Applications}

 Our new zero density estimates are motivated by their arithmetic consequences, a few of which are described below.  As we will soon see, the quality of these arithmetic consequences is highly sensitive to the specific shape of the zero density estimate; in particular, we cannot afford powers of $q^{\varepsilon}$ in our density results.

\subsubsection{Order of vanishing at $s=1/2$}

It is generally believed that $L$-functions should not vanish at $s=\frac{1}{2}$ unless $\pi$ is self-dual and the sign of the functional equation is $-1$, or there is a special arithmetic reason, like the relationship with Hasse--Weil $L$-functions and ranks of elliptic curves predicted by the Birch and Swinnerton-Dyer conjecture. See \cite{RadziwillYang} for higher rank examples. 

  If $\pi\in\mathcal{F}(q)$, then GRH implies that
\begin{equation}
\label{eqn:GRH}
\mathop{\mathrm{ord}}_{s=1/2}L(s,\pi)\ll\frac{\log q}{\log\log q}.	
\end{equation}
Using   \cref{thm:ZDE} and \cref{thm:MVTgeneral}, we prove the following result.
\begin{theorem}
\label{thm:order_of_vanishing}
If $q\geq 3$ is an integer, then
\[
\frac{1}{V(q)}\sum_{\pi\in\mathcal{F}(q)}\displaystyle\frac{\mathop{\mathrm{ord}}_{s=1/2}L(s,\pi)}{L(1,\pi,\Ad)}\ll  \log F(q)+\log\log q,\quad \frac{1}{V(q)}\sum_{\pi\in\mathcal{F}(q)}\mathop{\mathrm{ord}}_{s=1/2}L(s,\pi)\ll  \frac{\log q}{\log\log q}.
\]
\end{theorem}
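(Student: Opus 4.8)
The plan is to deduce both bounds from the zero-counting inequality
\[
\mathop{\mathrm{ord}}_{s=1/2}L(s,\pi) \le N_{\pi}(\sigma,T)
\]
valid for any $T\ge 2$ and any $\sigma$ with $1/2\le\sigma< 1$, which holds trivially since every zero at $s=1/2$ lies in the rectangle counted by $N_{\pi}(\sigma,T)$ (the central point has $|\gamma|=0\le T$ and $\beta=1/2\ge\sigma$ once we take $\sigma=1/2$; to exploit the density estimate we instead want to bound the order by a quantity that decays in $q$, so the real input is a \emph{zero-free region near the central point for all but few $\pi$}). Concretely, I would fix a small parameter and run \cref{thm:ZDE} (resp.\ \cref{cor:ZDE}): choosing $T$ bounded (say $T=2$) and $\sigma = 1/2 + c\,\frac{\log F(q)+\log\log q}{\log q}$ for a suitable constant $c$, the right-hand side of \cref{thm:ZDE} becomes $\ll q^{-Bc(\log F(q)+\log\log q)/\log q}F(q)(\log q)^5 \ll (\log q)^{-A}$ for $A$ as large as we like by taking $c$ large, so on average over the family the weighted count of zeros with $\beta\ge\sigma$ is $o(1)$. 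That is not quite enough by itself because the order of vanishing can be large; the key extra ingredient is that each individual $\mathop{\mathrm{ord}}_{s=1/2}L(s,\pi)$ is \emph{bounded}, in fact $\ll \log q/\log\log q$, via the standard argument bounding the number of zeros in a disc by the analytic conductor (Jensen/the log-derivative bound), exactly as in \eqref{eqn:GRH}.

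So the strategy splits into two regimes. First I would show the \emph{unconditional individual bound} $\mathop{\mathrm{ord}}_{s=1/2}L(s,\pi)\ll \log q/\log\log q$ for every $\pi\in\mathcal{F}(q)$: apply Jensen's formula (or a contour integral of $L'/L$) on a disc of radius $\asymp 1/\log q$ around $s=1/2$, using that the analytic conductor of $\pi\in\mathcal{F}(q)$ is $\asymp q$ (the archimedean part being bounded since $\mathcal{I}$ is fixed), which gives the second displayed bound immediately after summing trivially and using \eqref{eqn:average_weight} to control the weight $L(1,\pi,\Ad)^{-1}$ on average. Second, for the sharper \emph{weighted} bound I would combine: (i) the above individual bound $\mathop{\mathrm{ord}}\ll \log q/\log\log q$, and (ii) \cref{thm:ZDE} at the threshold $\sigma_0 = 1/2 + C(\log F(q)+\log\log q)/\log q$, which shows
\[
\frac{1}{V(q)}\sum_{\pi\in\mathcal{F}(q)}\frac{\mathbbm{1}[N_\pi(\sigma_0,2)\ge 1]}{L(1,\pi,\Ad)}\ll (\log q)^{-10},
\]
say. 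Write $\mathop{\mathrm{ord}}_{s=1/2}L(s,\pi)\le N_\pi(\sigma_0,2) + (\text{something})$; more precisely, split the family into those $\pi$ with a zero $\beta\ge\sigma_0$ (rare) and those without. For $\pi$ \emph{without} such a zero, all central zeros have $\beta<\sigma_0$, but of course $\beta=1/2<\sigma_0$ always, so this split is vacuous — the correct move is instead to bound $\mathop{\mathrm{ord}}_{s=1/2}L(s,\pi)$ directly by the number of zeros in a \emph{small disc}, and then trade: the disc radius $r$ contributes $\ll r\log q$ zeros by Jensen, and we need $r\log q$ to be of size $\log F(q)+\log\log q$, i.e.\ $r\asymp (\log F(q)+\log\log q)/\log q$. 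The number of $\pi$ for which even this many zeros accumulate that close to the line beyond $\sigma_0$ is controlled by \cref{thm:ZDE}; for the rest we just use that $r\log q\ll \log F(q)+\log\log q$ zeros is already the claimed bound.

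The cleanest way to organize this is: for each $\pi$, Jensen's inequality on the disc $|s-1/2|\le r$ with $r = \kappa(\log F(q)+\log\log q)/\log q$ gives
\[
\mathop{\mathrm{ord}}_{s=1/2}L(s,\pi)\;\le\; \frac{1}{\log 2}\Big(\log\max_{|s-1/2|=2r}|L(s,\pi)| - \log|L(\tfrac12+2r\cdot\tfrac{?}{},\pi)|\Big),
\]
and here the maximum modulus on the circle is bounded using the approximate functional equation / convexity in terms of the analytic conductor $\asymp q$ and, crucially, in terms of $L(1,\pi,\Ad)$ through the residue of the Rankin–Selberg $L(s,\pi\times\tilde\pi)$ at $s=1$ (this is where the weight $L(1,\pi,\Ad)^{-1}$ enters naturally). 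Summing over the family with the weight and applying \eqref{eqn:average_weight} together with \cref{thm:MVTgeneral} to handle the mean square of the Dirichlet-coefficient sums appearing in the approximate functional equation yields $\frac{1}{V(q)}\sum_\pi \frac{\mathop{\mathrm{ord}}_{s=1/2}L(s,\pi)}{L(1,\pi,\Ad)}\ll r\log q \asymp \log F(q)+\log\log q$. The main obstacle is executing step (ii) cleanly: controlling $\log\max|L(s,\pi)|$ near $s=1/2$ uniformly over the family in a way that (a) is polynomial (indeed logarithmic) in $q$, (b) carries the $L(1,\pi,\Ad)^{-1}$ weight correctly so that \eqref{eqn:average_weight} applies, and (c) interfaces with \cref{thm:MVTgeneral} rather than with a pointwise subconvexity bound that we do not have for all of $\mathcal F(q)$. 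I expect this to require writing $L(s,\pi)$ via its approximate functional equation with a Dirichlet polynomial of length $\ll \sqrt q$, well within the range $x\le \Cr{Blomer1}q$ of \cref{thm:MVTgeneral}, so that the large sieve supplies exactly the second-moment control needed; the zero density estimate \cref{thm:ZDE} then only needs to absorb the (rare) $\pi$ with anomalously many zeros clustering near $s=1/2$, for which the crude bound $\log q/\log\log q$ of part (i) suffices.
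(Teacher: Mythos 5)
Your proposal takes a genuinely different route from the paper, and it has a fatal gap at its foundation.

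The central claim of your step (i) — that $\mathop{\mathrm{ord}}_{s=1/2}L(s,\pi)\ll \log q/\log\log q$ holds \emph{unconditionally} for each individual $\pi\in\mathcal{F}(q)$ via Jensen's formula on a small disc — is false, and the paper explicitly flags this: \eqref{eqn:GRH} is labeled as a GRH-conditional bound, and the whole point of \cref{thm:order_of_vanishing} is that this bound ``is unconditionally true \emph{on average}.'' Jensen's inequality on a disc of radius $\asymp 1/\log q$ around $s=1/2$ requires a lower bound on $|L(s_0,\pi)|$ at a point $s_0$ inside the disc, and near $s=1/2$ no such unconditional lower bound is available (indeed $L$ may vanish to high order there). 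The standard unconditional bound, via \cite[Proposition 5.7]{IK} with a disc of radius $\asymp 1$ centered well to the right of the critical line, gives only $\mathop{\mathrm{ord}}_{s=1/2}L(s,\pi)\ll\log q$. The $\log q/\log\log q$ refinement genuinely needs GRH (or, as the paper shows, an averaging device). With your step (i) gone, the rest of your plan has no individual input to trade against the density estimate, and the argument collapses. (Also, you invoke \eqref{eqn:average_weight} for the \emph{unweighted} bound, where there is no $L(1,\pi,\Ad)^{-1}$ factor to control.)

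The paper's actual mechanism is quite different and sidesteps the lower-bound-on-$|L|$ issue entirely. It uses a Weil/Mestre-type explicit formula with a test function $\Psi$ (Laplace transform of a compactly supported $\psi$) chosen so that $\re\Psi(z)\ge0$ for $|\re z|\le1$, as in \eqref{eqn:Psi_lower}. Applying the explicit formula to $\Lambda'/\Lambda(\tfrac12+s,\pi)$ and taking real parts, the zeros with $|\beta-\tfrac12|<1/\log x$ contribute nonnegatively, so the central order of vanishing is bounded by
\[
\mathop{\mathrm{ord}}_{s=1/2}L(s,\pi)\ll\frac{\log q}{\log x}
+\sum_{|\beta-\frac12|\ge1/\log x}\big|\Psi\big((\rho-\tfrac12)\log x\big)\big|
+\frac{1}{\log x}\Big|\sum_n\frac{a_\pi(n)\Lambda(n)}{\sqrt n}\psi\Big(\frac{\log n}{\log x}\Big)\Big|.
\]
Averaging over $\pi$, the far-zero sum is handled by the zero density estimate \cref{thm:ZDE} (after tiling into $\asymp1/\log x$ squares and using the rapid decay \eqref{eqn:Psi_upper}), and the prime-power sum is handled by the large sieve \cref{newcor} for $a_\pi(p^k)\Lambda(p^k)$ in the weighted case (and trivially via \eqref{eqn:vonMangoldtBound} in the unweighted case). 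The parameter $x$ is then chosen to balance the two: $x=\exp\big(B\log q/\log(F(q)(\log q)^5)\big)$ for the weighted bound and $x=(\log q)^{C}$ for the unweighted one. The key structural point your proposal misses is that Weil positivity replaces the lower bound on $|L|$ that Jensen would demand: positivity of $\re\Psi$ near the critical line lets one \emph{upper-bound} the central order by the full explicit-formula expression minus the far-zero contribution, rather than trying to isolate a small disc.

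Your instinct that one must combine a density estimate with a second ``individual'' handle is correct in spirit, but the correct second ingredient is the explicit-formula prime sum (estimated on average via the large sieve \cref{newcor}), not an unconditional Jensen bound that does not exist.
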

\cref{thm:order_of_vanishing} demonstrates that on average over $\pi\in\mathcal{F}(q)$, the GRH-conditional bound \eqref{eqn:GRH} is unconditionally true.  When the weights $L(1,\pi,\Ad)^{-1}$ are inserted, we obtain on average a bound much stronger than what GRH implies for an individual $L$-function. If the generalized Ramanujan conjecture were true, then our proof would produce matching weighted and unweighted bounds for the average order of vanishing.

It is worth noting that while we could state our zero density estimates in \cref{thm:ZDE,cor:ZDE} coarsely as $\ll_{B,\epsilon} q^{-B(\sigma-1/2)+\varepsilon}$, the corresponding upper bounds in \cref{thm:order_of_vanishing} would only assume the form $o(\log q)$.  It is a nice coincidence that the size of Li's bound \eqref{eqn:Li_Ad} is no more than the exponential of the right hand side of \eqref{eqn:GRH}, which we crucially exploit in the unweighted result to obtain a GRH-quality bound on average. 

\cref{thm:order_of_vanishing} is new for all $m\geq 2$.  Kowalski and Michel \cite{KM_J0} (see also \cite{KMV}) proved a stronger result as $\pi$ varies in the family of cuspidal automorphic representations generated by the holomorphic elliptic curve newforms of weight $2$ and prime level $q$.  In this family, the average order of vanishing at $s=\frac{1}{2}$ is $O(1)$.   Their proof relies on a version of \cref{thm:ZDE} for prime $q$ where the power of $\log q$ equals $1$.  In light of the Riemann--von Mangoldt asymptotic for the count of zeros up to height $T$, this power of $\log q$, if attainable, is optimal.

A similar result with essentially the same proof holds for the spectral family $\mathcal{G}(Q)$ featured in Theorem \ref{thm:ZDE2}. 
\begin{theorem}
\label{thm:ZDE2appl}
If $Q\geq 3$, then
\[
\frac{1}{|\mathcal{G}(Q)|}\sum_{\pi\in\mathcal{G}(Q)}\frac{\mathop{\mathrm{ord}}_{s=1/2}L(s,\pi)}{L(1,\pi,\Ad)}\ll \log\log Q,\qquad \frac{1}{|\mathcal{G}(Q)|}\sum_{\pi\in\mathcal{G}(Q)}\mathop{\mathrm{ord}}_{s=1/2}L(s,\pi)\ll \frac{\log Q}{\log\log Q}.
\]
\end{theorem}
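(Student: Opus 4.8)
The plan is to mirror exactly the argument behind Theorem 1.5 (the order-of-vanishing bound for $\mathcal{F}(q)$), now using the zero density estimate of Theorem 1.3 for the spectral family $\mathcal{G}(Q)$ in place of Theorem 1.1. The starting point is the standard observation that the order of vanishing of $L(s,\pi)$ at the central point is bounded by the number of zeros in a small disc: for any $\sigma>1/2$,
\[
\mathop{\mathrm{ord}}_{s=1/2}L(s,\pi)\leq N_{\pi}(\sigma,1)+\Big(\text{something controlling the contribution of zeros with }\beta<\sigma\Big).
\]
More precisely, one combines this with a Jensen-type inequality (or the explicit formula with a suitably chosen test function) to get, for every $\pi\in\mathcal{G}(Q)$ and every $\sigma\in(1/2,1)$,
\[
\mathop{\mathrm{ord}}_{s=1/2}L(s,\pi)\ll \frac{\log C(\pi)}{\log\big(\tfrac{1}{\sigma-1/2}\big)} \cdot(\sigma - \tfrac12)\ +\ N_{\pi}(\sigma,1),
\]
or some inequality of this flavor in which the first term is harmless once $\sigma-1/2$ is a fixed small power of $1/\log Q$ and we use $\log C(\pi)\leq\log Q$ on the family $\mathcal{G}(Q)$. (The precise bookkeeping is the same as in the proof of Theorem 1.5 and I would not reproduce it.)

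Next I would average over $\pi\in\mathcal{G}(Q)$, inserting or not inserting the weight $L(1,\pi,\Ad)^{-1}$. For the weighted statement, divide by $|\mathcal{G}(Q)|$, sum, and apply the first bound of Theorem 1.3 with $T=1$ and $\sigma=\sigma_0:=1/2+c/\log Q$ for a suitable absolute constant $c>0$; then $Q^{-B(\sigma_0-1/2)}=Q^{-Bc/\log Q}=e^{-Bc}\ll 1$, so the zero-density term contributes $\ll(\log Q)^5$, while the first (central-disc) term contributes $\ll(\sigma_0-1/2)\log Q\cdot(\text{bounded factor})\ll 1$ after using \eqref{eqn:average_weight}-type control on $\sum_\pi L(1,\pi,\Ad)^{-1}/|\mathcal{G}(Q)|\ll 1$. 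One has to be slightly more careful: to actually beat $(\log Q)^5$ down to $\log\log Q$, one optimizes the choice of $\sigma_0$, taking $\sigma_0-1/2$ on the order of $(\log\log Q)/\log Q$ rather than $1/\log Q$, so that $Q^{-B(\sigma_0-1/2)}(\log Q)^5 = (\log Q)^{5-Bc'}\ll 1$ for a large enough constant $c'$, while the central-disc term is $\ll (\sigma_0-1/2)\log Q\ll \log\log Q$. Since there is no analogue of the $F(q)$ factor in Theorem 1.3 (that is the whole point of the spectral family: the weighted large sieve is clean), the weighted bound comes out as $\ll\log\log Q$, with no $\log F$ term — this is exactly the difference from Theorem 1.5.

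For the unweighted statement, one uses instead the second inequality of Theorem 1.3, which costs a factor $R_{\mathcal{G}}(Q)\ll\exp\!\big(\Cr{TRange_Jana2}\tfrac{\log Q}{\log\log Q}\big)$ from \eqref{eqn:Li_Ad_spectral}. Now the zero-density term contributes $\ll Q^{-B(\sigma_0-1/2)}R_{\mathcal{G}}(Q)(\log Q)^5$; choosing $\sigma_0-1/2$ of size a fixed constant times $1/\log\log Q$ makes $Q^{-B(\sigma_0-1/2)}=\exp(-B(\sigma_0-1/2)\log Q)$ dominate $R_{\mathcal{G}}(Q)$ up to a factor $\exp(O(\log Q/\log\log Q))$, and with the optimal constant the whole expression is $\ll\log Q/\log\log Q$ — matching the shape of the GRH bound \eqref{eqn:GRH} for $\mathcal{G}(Q)$. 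The central-disc term is then $\ll(\sigma_0-1/2)\log Q\ll\log Q/\log\log Q$ as well, so both pieces are of the claimed size.

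The main obstacle — and it is the same one as in the proof of Theorem 1.5 — is arranging the first, "central-disc'' step so that it genuinely reduces $\mathop{\mathrm{ord}}_{s=1/2}L(s,\pi)$ to $N_{\pi}(\sigma_0,1)$ with an \emph{error that does not already cost a power of $\log Q$}. Naively bounding $\mathop{\mathrm{ord}}_{s=1/2}$ by the number of zeros in a disc of radius bounded below loses a whole $\log Q$; one needs the radius to shrink like $1/\log Q$ (so that the number of zeros in it is still $\ll\log Q$ by Riemann–von Mangoldt, but the \emph{excess} over the true order is controlled), and then feed that shrinking $\sigma_0$ into Theorem 1.3, exploiting that $Q^{-B(\sigma_0-1/2)}$ still saves against the polynomial and $R_{\mathcal{G}}$ factors for a good choice of the implied constants. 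Since $\mathcal{G}(Q)$ has no $F(q)$-type loss, the requisite input from Theorem 1.3 is slightly cleaner than in the $\mathcal{F}(q)$ case, and the rest is routine; I would remark at the end that the proof is "essentially the same'' as that of Theorem 1.5 and omit the repeated details.
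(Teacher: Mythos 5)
Your high-level plan is the right one, and the paper itself gives no more detail than you do: after proving \cref{thm:order_of_vanishing} for $\mathcal{F}(q)$, the authors simply remark that ``a similar result with essentially the same proof holds for the spectral family $\mathcal{G}(Q)$,'' and state \cref{thm:ZDE2appl} with no separate proof. Your key structural observation -- that the spectral zero density estimate of \cref{thm:ZDE2} carries no $F(q)$-type loss, which is precisely why the weighted bound comes out as $\log\log Q$ rather than $\log F(q)+\log\log q$ -- is correct and is the real content of the remark.

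However, the specific inequality you propose to replace the explicit-formula step is not merely imprecise; it is false. You wrote, for every $\sigma\in(1/2,1)$,
\[
\mathop{\mathrm{ord}}_{s=1/2}L(s,\pi)\ll \frac{\log C(\pi)}{\log(1/(\sigma-1/2))}(\sigma - \tfrac12)+N_{\pi}(\sigma,1),
\]
with an absolute implied constant. Under GRH, $N_\pi(\sigma,1)=0$ for every $\sigma>1/2$, and letting $\sigma\to 1/2^+$ the first term tends to $0$; this would force $L(1/2,\pi)\neq 0$ for \emph{every} $\pi$, which is false (self-dual forms with root number $-1$). The actual inequality in the paper, obtained from the argument principle applied to $\Lambda'/\Lambda(1/2+s,\pi)$ with a Fejér-type kernel $\Psi$ satisfying $\re\Psi(z)\ge 0$ for $|\re z|\le 1$, reads
\[
\mathop{\mathrm{ord}}_{s=1/2}L(s,\pi)\ll\frac{\log q_\pi}{\log x}+\sum_{\substack{\rho=\beta+i\gamma\\ |\beta-\frac12|\ge 1/\log x}}|\Psi((\rho-\tfrac12)\log x)|+\frac{2}{\log x}\Big|\sum_{n}\frac{a_\pi(n)\Lambda(n)}{\sqrt n}\psi\Big(\frac{\log n}{\log x}\Big)\Big|.
\]
This is not of your proposed shape: the zero contribution is a weighted sum over \emph{all} distant zeros rather than $N_\pi(\sigma,1)$ at one $\sigma$ (and the zero density bound is then fed in over a grid of $\sigma$'s), and -- this is the second gap -- there is a prime-sum term that your sketch drops entirely. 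That term is not negligible. In the unweighted version it is bounded trivially by $x^{1/2+\theta_m}/\log x$ and constrains the admissible size of $x$; in the weighted version it is handled by Cauchy--Schwarz together with a large sieve inequality for linear forms in $a_\pi(n)\Lambda(n)$ (\cref{newcor} for $\mathcal{F}(q)$), which contributes at the same order $\log q/\log x$ as the conductor term. Porting this to $\mathcal{G}(Q)$ therefore requires the spectral analogue of \cref{newcor}, i.e.\ a large sieve for $a_\pi$ built from the $\GL(m)$ Fourier coefficients, which one would extract from \cite[Theorem 5]{Jana_newvectors} just as in the proof of \cref{thm:ZDE2}; your write-up does not flag that this is needed. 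With the corrected inequality and this spectral large sieve, the parameter choices you indicate ($\log x\asymp \log Q/\log\log Q$ for the weighted bound, $x=(\log Q)^{c}$ with $c$ small for the unweighted bound, using \eqref{eqn:Li_Ad_spectral} to absorb $R_{\mathcal{G}}(Q)$) do indeed give the claimed $\log\log Q$ and $\log Q/\log\log Q$.
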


\subsubsection{Equidistribution of fractional imaginary parts of nontrivial zeros}

 Fix $\alpha\in\R^{\times}$, and let $I\subseteq[0,1]$ be a subinterval.  Let $\{x\}$ denote the fractional part of $x$.
 
 For the Riemann zeta function, let $N(T)$ be the count (including multiplicity) of nontrivial zeros $\beta+i\gamma$ with $|\gamma|\leq T$.  Refining preceding work of Rademacher \cite{Rademacher} and Hlawka \cite{Hlawka}, Fujii \cite{Fujii} proved that
\begin{equation}
\label{eqn:Fujii}
\sup_{I\subseteq[0,1]}\Big|\frac{1}{N(T)}\sum_{\substack{|\gamma|\leq T \\ \{\alpha\gamma\}\in I}}1-|I|\Big|\ll_{\alpha} \frac{\log\log T}{\log T}.
\end{equation}
Thus, the fractional parts $\{\alpha\gamma\}$ with $|\gamma|\leq T$ are equidistributed in intervals larger than $(\log\log T)/\log T$.  Ford and Zaharescu \cite{FordZaharescu} proved that there exists an explicit function $g_{\alpha}(t)$ such that if the indicator function of $I$ is replaced with a twice continuously differentiable weight function $h$, then
\begin{equation}
\label{eqn:FordZaharescu}
\sum_{|\gamma|\leq T}h(\{\alpha \gamma\})=N(T)\int_{0}^{1}h(t)dt + T\int_{0}^{1}h(t)g_{\alpha}(t)dt+o_{\alpha,h}(T).
\end{equation} 
Ford, Soundararajan, and Zaharescu \cite{FSZ} related this phenomenon to Montgomery's pair correlation function and the distribution of primes in short intervals.

This phenomenon conditionally extends to other $L$-functions.  Let $\mathfrak{F}_m$ be the set of cuspidal automorphic representations of $\GL_m(\mathbb{A}_{\Q})$ with unitary central character.  It follows from the work of Ford, Soundararajan, and Zaharescu \cite{FSZ} that if there exist constants $A_{\pi}>0$ and $B_{\pi}>0$ such that the zero density estimate
\begin{equation}
\label{eqn:ZDE_conj}
N_{\pi}(\sigma,T)\ll_{\pi} T^{1-A_{\pi}(\sigma-\frac{1}{2})}(\log T)^{B_{\pi}},\qquad \textup{$T\geq 1$ and $\sigma\geq0$}
\end{equation}
holds, then analogues of \eqref{eqn:Fujii} and  \eqref{eqn:FordZaharescu} hold for the zeros of $L(s,\pi)$.  So far, \eqref{eqn:ZDE_conj} is known only when $m=1$ \cite{Fujii_density,Selberg_density} or $m=2$ \cite{BLTZ,Luo_density}.

For $m\geq 3$, we use \cref{thm:ZDE} to prove that on average over $\pi\in\mathcal{F}(q)$ (weighted by $L(1,\pi,\Ad)^{-1}$), the fractional imaginary parts are equidistributed up to a height depending on $q$.  To state our result, we write $N_{\pi}(T)=N_{\pi}(0,T)$.  Recall \cite[Theorem 5.8]{IK} that
\begin{equation}
\label{eqn:RvonM}
N_{\pi}(T)=\frac{T}{\pi}\log\Big(q_{\pi} \Big(\frac{T}{2\pi e}\Big)^m\Big)+O(\log(C(\pi)T)),
\end{equation}
where $q_{\pi}$ (resp.\ $C(\pi)$) is the arithmetic conductor (resp.\ analytic conductor) of $\pi$.

\begin{theorem}
\label{thm:zeros_equidistr}
Let $q$ be a large prime and $\alpha\in\R^{\times}$.  If $r>2$, $T=(\log q)^r$, and $I$ denotes a subinterval of $[0,1]$, then
\[
\sup_{I\subseteq[0,1]}\Big|\frac{1}{V(q)}\sum_{\pi\in\mathcal{F}(q)}\frac{1}{L(1,\pi,\Ad)}\Big(\frac{1}{N_{\pi}(T)}\sum_{\substack{|\gamma|\leq T \\ \{\alpha\gamma\}\in I}}1-|I|\Big)\Big|\ll_{\alpha,r}\frac{\log\log\log q}{\log\log q}.
\]	
\end{theorem}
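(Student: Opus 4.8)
The plan is to combine the Erd\H{o}s--Tur\'an inequality with a Landau--Gonek type explicit formula, routing the zeros off the critical line into \cref{thm:ZDE} and the resulting sums over prime powers---which are linear forms in the Dirichlet coefficients of $L'/L(s,\pi)$, exactly of the type treated in Section~\ref{sec3} via \cref{thm:MVTgeneral}---into the large sieve. Fix a subinterval $I\subseteq[0,1]$; the bound produced will be independent of $I$, so the supremum is harmless. Writing $e(u)=e^{2\pi iu}$ and applying the Erd\H{o}s--Tur\'an inequality to the multiset $\{\gamma:|\gamma|\le T\}$ counted with multiplicity (of which there are $N_\pi(T)$), we get, for every integer $H\ge 1$,
\[
\Big|\frac{1}{N_\pi(T)}\sum_{\substack{|\gamma|\le T\\\{\alpha\gamma\}\in I}}1-|I|\Big|\ll\frac1H+\sum_{1\le h\le H}\frac1h\cdot\frac{1}{N_\pi(T)}\Big|\sum_{|\gamma|\le T}e(h\alpha\gamma)\Big|.
\]
Multiplying by $L(1,\pi,\Ad)^{-1}V(q)^{-1}$, summing over $\pi\in\mathcal{F}(q)$, using the triangle inequality and $V(q)^{-1}\sum_{\pi\in\mathcal{F}(q)}L(1,\pi,\Ad)^{-1}\ll1$ (see \eqref{eqn:average_weight}), one sees that the left-hand side of the theorem is
\[
\ll\frac1H+\sum_{1\le h\le H}\frac1h\cdot\frac{1}{V(q)}\sum_{\pi\in\mathcal{F}(q)}\frac{1}{L(1,\pi,\Ad)\,N_\pi(T)}\Big|\sum_{|\gamma|\le T}e(h\alpha\gamma)\Big|,
\]
uniformly in $I$. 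I would take $H\asymp_{\alpha,r}(\log\log q)/(\log\log\log q)$, so that $1/H$ already has the claimed size; the rest of the work is to show that the second term is no larger. Since $N_\pi(T)\asymp T\log q$ uniformly for $\pi\in\mathcal{F}(q)$ by \eqref{eqn:RvonM} (as $C(\pi)\asymp q^{m-1}$), it suffices to bound, for each fixed $h\le H$, the $L(1,\pi,\Ad)^{-1}$-weighted average over $\mathcal{F}(q)$ of $(T\log q)^{-1}|\sum_{|\gamma|\le T}e(h\alpha\gamma)|$.

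Fix $h\le H$ and put $x=e^{2\pi h|\alpha|}>1$; by the choice of $H$, $x\le e^{2\pi H|\alpha|}=(\log q)^{o(1)}$ is a vanishingly small power of $\log q$. By the functional equation $\overline{L(\bar s,\pi)}=L(s,\widetilde{\pi})$, the ordinates $\gamma<0$ of $L(s,\pi)$ are, up to sign, the positive ordinates of $L(s,\widetilde{\pi})$; since $\widetilde{\pi}$ runs over $\mathcal{F}(q)$ as $\pi$ does and $L(1,\widetilde{\pi},\Ad)=L(1,\pi,\Ad)$, it is enough to handle $\sum_{0<\gamma\le T}x^{\pm i\gamma}$ on average (the $O(\log q)$ real zeros at height $0$ are negligible). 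Integrating $-\tfrac{L'}{L}(s,\pi)x^s$ around a long rectangle, with the height chosen near $T$ to avoid ordinates (which perturbs $N_\pi(T)$ by $O(\log(q_\pi T))$ and nothing else), gives a Landau--Gonek type identity
\[
\sum_{0<\gamma\le T}x^{\rho}=-\frac{T}{2\pi}\Lambda_\pi(x)+E_\pi(x,T),
\]
where $\Lambda_\pi(n)$ is defined by $-L'/L(s,\pi)=\sum_n\Lambda_\pi(n)n^{-s}$ (so $\Lambda_\pi(x)=0$ unless $x$ is a prime power), and the error $E_\pi(x,T)$ is bounded by a fixed power of $x$ times a fixed power of $\log(q_\pi T)$, plus a term $\ll\log x\cdot\min(T,x/\langle x\rangle)$ controlling the proximity of $x$ to the nearest prime power $\langle x\rangle$; moreover, the genuinely $\pi$-dependent part of $E_\pi(x,T)$ is a linear form of length $\ll x^{O(1)}$ in the coefficients $\Lambda_\pi(n)$. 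Using $x^{i\gamma}=x^{-1/2}x^{\rho}-x^{i\gamma}(x^{\beta-1/2}-1)$ we obtain
\[
\sum_{0<\gamma\le T}x^{i\gamma}=-\frac{T\,\Lambda_\pi(x)}{2\pi\sqrt x}+\frac{E_\pi(x,T)}{\sqrt x}-\sum_{0<\gamma\le T}x^{i\gamma}\big(x^{\beta-1/2}-1\big),
\]
and the three pieces are estimated separately; throughout I use $N_\pi(T)\gg T\log q$ and $\log(q_\pi T)\asymp\log q$.

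The first piece is negligible: $-T\Lambda_\pi(x)/(2\pi\sqrt x)$ vanishes unless $e^{2\pi|\alpha|}$ is a prime power $p^k$, in which case $\Lambda_\pi(p^{kh})/\log p=\sum_{j=1}^m\alpha_{j,\pi}(p)^{kh}$ is a bounded linear combination of coefficients $B_\pi(p^{\mathbf{a}})$ with $w(p^{\mathbf{a}})\ll p^{O(kh)}\ll q$; dividing by $N_\pi(T)$, summing $\sum_h h^{-1}$, and invoking Cauchy--Schwarz together with \cref{thm:MVTgeneral} (or simply $|\alpha_{j,\pi}(p)|\le p^{\theta}$ for a fixed $\theta<\tfrac12$) bounds its contribution by $\ll_\alpha(\log q)^{-1}$. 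The second piece is also negligible: the polynomial part of $E_\pi(x,T)/\sqrt x$ is $\ll x^{O(1)}(\log q)^{O(1)}$, so after dividing by $N_\pi(T)\gg T\log q$ and summing over $h\le H$ it is $\ll_\alpha (\log q)^{O(1)}x_H^{O(1)}/T=(\log q)^{O(1)-r}=o(1)$ since $T=(\log q)^r$ is a large enough power of $\log q$ and $x_H=(\log q)^{o(1)}$; the $\min(T,\cdot)$ part contributes $\ll\tfrac{1}{T\log q}\sum_{h\le H}\tfrac{\log x_h}{h}\cdot T\ll_\alpha\tfrac{H}{\log q}$, which is where the potential closeness of the $x_h$ to prime powers is absorbed uniformly in $\alpha$; and the $\pi$-dependent part, being a short linear form in the coefficients of $L'/L(s,\pi)$, satisfies $V(q)^{-1}\sum_\pi L(1,\pi,\Ad)^{-1}|E_\pi(x,T)|\ll x^{O(1)}(\log x)^{O(1)}$ by Cauchy--Schwarz and \cref{thm:MVTgeneral}, so the same numerology applies.

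The heart of the matter is the third piece, $\Sigma_h(\pi):=\sum_{0<\gamma\le T}x^{i\gamma}(x^{\beta-1/2}-1)$. Zeros with $\beta=\tfrac12$ drop out, and from $|x^{\beta-1/2}-1|\ll|\beta-\tfrac12|\log x\cdot\max(1,x^{\beta-1/2})$, a layer-cake decomposition, and the functional equation,
\[
|\Sigma_h(\pi)|\ll\log x\int_{1/2}^{1}x^{\sigma-1/2}\big(N_\pi(\sigma,T)+N_{\widetilde{\pi}}(\sigma,T)\big)\,d\sigma.
\]
I would divide by $N_\pi(T)\gg T\log q$, average over $\pi$ (the $N_{\widetilde{\pi}}$ term averaging like the $N_\pi$ term), and split the $\sigma$-integral at $\sigma=\tfrac12+\delta_1$, where $\delta_1\asymp_{B,r}(\log\log q)/\log q$ is the abscissa at which the bound of \cref{thm:ZDE} (with $F(q)\ll1$, since $q$ is prime) overtakes the trivial bound $N_\pi(\sigma,T)\le N_\pi(T)\ll T\log q$. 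On $[\tfrac12,\tfrac12+\delta_1]$ the trivial bound gives, after summing $\sum_{h\le H} h^{-1}\log x_h\ll_\alpha H$ and using $x_h^{\delta_1}=1+o(1)$, a contribution $\ll_\alpha\delta_1 H$; on $[\tfrac12+\delta_1,1]$, \cref{thm:ZDE} together with $\int_{1/2+\delta_1}^1(x_hq^{-B})^{\sigma-1/2}\,d\sigma\ll q^{-B\delta_1}/(B\log q)$ gives a contribution $\ll_{\alpha,B}T^{m-1}(\log q)^{4}\,q^{-B\delta_1}H/\log q$, and here the defining property of $\delta_1$ is that $q^{B\delta_1}$ is exactly the power of $\log q$ needed to cancel $T^{m-1}(\log q)^{5}$, so this is $\ll_{\alpha,B,r}H/\log q$ as well. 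Both $\delta_1 H$ and $H/\log q$ are $\ll_{\alpha,r}(\log\log\log q)/\log\log q$ for $q$ large. Combining everything with the reduction of the first paragraph gives the theorem. The hard part is the tension in the choice of $H$: the term $1/H$ wants $H$ as large as possible, whereas the polynomial error $x^{O(1)}(\log q)^{O(1)}/T$ and---more fundamentally---the fact that \cref{thm:ZDE} is weaker than trivial until $\sigma-\tfrac12\gg(\log\log q)/\log q$ both force $x=e^{2\pi h|\alpha|}$ (hence $x_H$) to remain a small power of $\log q$; the window $H\asymp_{\alpha,r}(\log\log q)/(\log\log\log q)$ is where both demands are met, and it is exactly this window that reproduces Fujii's bound \eqref{eqn:Fujii} with $\log T$ replaced by $\log\log q$.
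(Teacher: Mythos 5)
Your argument is correct and follows essentially the same route as the paper: a trigonometric approximation of $\mathbf{1}_I$ (Erd\H{o}s--Tur\'an, in place of the paper's Beurling--Selberg polynomials from \cref{prop:Beurling-Selberg}, but these are the same underlying construction), a Gonek--Landau explicit formula in the spirit of \cref{lem:GonekLandau}, and a splitting of the transition term $\sum_{\gamma}x^{i\gamma}(x^{\beta-1/2}-1)$ by distance to the critical line with the far zeros routed through \cref{thm:ZDE}, balancing the truncation height $H$ against the abscissa $\asymp(\log\log q)/\log q$ where the density estimate beats the trivial bound. The main difference is that the paper, in \cref{lem:ZDE_application}, pairs $\beta$ with $1-\beta$ and exploits the identity $(1-x^{\beta-\frac12})+(1-x^{\frac12-\beta})=-4\sinh^2(\tfrac12(\beta-\tfrac12)\log x)$ to gain an extra factor of $|\beta-\tfrac12|\log x$ for near-critical zeros; you use only the linear bound $|x^{\beta-1/2}-1|\ll|\beta-\tfrac12|\log x$, which is weaker but, as your numerology correctly shows, still comfortably below the $1/H$ threshold dictating the final answer, so the sharper cancellation is not needed here.
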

  
Because $T$ must be so small compared to $q$, our proof cannot recover any secondary main terms in \eqref{eqn:FordZaharescu}.  Unlike \cref{thm:order_of_vanishing}, our proof of \cref{thm:zeros_equidistr} is so sensitive to the factor of $q^{-B(\sigma-1/2)}$ in our zero density estimates that we cannot afford to have any factor larger than a power of $\log q$ (hence our constraint on $T$ in \cref{thm:zeros_equidistr}).  In particular, we cannot afford a factor as large as \eqref{eqn:Li_Ad}, and we must choose $q$ so that $F(q)$ is small (e.g., $q$ a prime).

A similar result with essentially the same proof holds for the spectral family $\mathcal{G}(Q)$ featured in Theorem \ref{thm:ZDE2}.

\begin{theorem}
\label{thm:zeros_equidistr2}
 Let $Q$ be large and $\alpha\in\R^{\times}$.  If $r>2$, $T=(\log Q)^r$, and $I$ denotes a subinterval of $[0,1]$, then
\[
\sup_{I\subseteq[0,1]}\Big|\frac{1}{|\mathcal{G}(Q)|}\sum_{\pi\in\mathcal{G}(Q)}\frac{1}{L(1,\pi,\Ad)}\Big(\frac{1}{N_{\pi}(T)}\sum_{\substack{|\gamma|\leq T \\ \{\alpha\gamma\}\in I}}1-|I|\Big)\Big|\ll_{\alpha,r}\frac{\log\log\log Q}{\log\log Q}.
\]
\end{theorem}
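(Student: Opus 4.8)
The plan is to mirror the argument for \cref{thm:zeros_equidistr}, the only differences being the parametrization of the family (analytic conductor $\leq Q$ instead of level $q$) and the source of the large sieve input. First I would expand the indicator $\mathbbm{1}_{\{\alpha\gamma\}\in I}$ into its Fourier series $\sum_{k\in\Z}\hat{\mathbbm{1}_I}(k)e(k\alpha\gamma)$ (suitably mollified: replace $\mathbbm{1}_I$ by smooth majorant/minorant weights $h^{\pm}$ agreeing with $\mathbbm{1}_I$ except on intervals of length $\delta$, so that the sum over $k$ is essentially truncated at $|k|\leq \delta^{-1}$, contributing an error $O(\delta)$ to be optimized at the end). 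The $k=0$ term gives precisely $|I|\cdot N_\pi(T)$, so the task reduces to bounding, on average over $\pi\in\mathcal{G}(Q)$ weighted by $L(1,\pi,\Ad)^{-1}$, the quantity $\frac{1}{N_\pi(T)}\sum_{1\leq |k|\leq \delta^{-1}}|\hat{\mathbbm{1}_I}(k)|\,\big|\sum_{|\gamma|\leq T}e(k\alpha\gamma)\big|$.

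The next step is the standard explicit-formula identity expressing $\sum_{|\gamma|\leq T}e(k\alpha\gamma)$ (equivalently, a smoothed version of it against a test function localizing $\gamma$ to $[-T,T]$) in terms of a sum over prime powers: one takes a test function whose Fourier transform localizes to the interval $[0, \log T \cdot (\text{something})]$ in the ``$x$'' variable, and the explicit formula for $L(s,\pi)$ converts the sum over zeros into $N_\pi(T)$-type main terms plus a Dirichlet-polynomial-type sum $\sum_{n} \frac{\Lambda_\pi(n)}{\sqrt n}(\cdots)$ in the coefficients of $-L'/L(s,\pi)$, plus archimedean and error terms of size $O(\log(C(\pi)T))$. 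Here the shift by $k\alpha$ means the prime sum is weighted by $n^{\pm i k\alpha}$-type oscillating factors, which does not affect the modulus bounds. Crucially, the length of the resulting Dirichlet polynomial is a small power of $T=(\log Q)^r$, hence a small power of $\log\log Q$, which is why the method tolerates only such tiny $T$: the zero density estimate \cref{thm:ZDE2} enters exactly to control the contribution of putative zeros off the critical line when passing from the explicit formula bound to the pointwise count, and its factor $Q^{-B(\sigma-1/2)}$ must dominate the at-most-polynomial-in-$T$ and at-most-$(\log Q)^5$ losses.

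I would then average over $\pi\in\mathcal{G}(Q)$: the weighted sum $\frac{1}{|\mathcal{G}(Q)|}\sum_{\pi}L(1,\pi,\Ad)^{-1}|\sum_n \beta(n) \lambda_\pi(n)/\sqrt n|^2$ with $w(\mathbf n)\leq (\text{small power of }(\log Q)^r)$ is bounded by $\sum_n |\beta(n)|^2/n$ via the spectral large sieve for $\mathcal{G}(Q)$ — the analogue of \cref{thm:MVTgeneral} in the conductor aspect, which is the hypothesis on which \cref{thm:ZDE2} was predicated (following Jana \cite{Jana_newvectors}); since the coefficients $\Lambda_\pi(n)$ of $-L'/L$ are supported on prime powers and expressible through Schur polynomials in the Satake parameters as in \eqref{eqn:schur}, the large sieve applies after the usual Cauchy–Schwarz and dyadic decomposition. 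Combined with Cauchy–Schwarz against $\sum_k |\hat{\mathbbm{1}_I}(k)|$ (which is $O(\log \delta^{-1})$) and the normalization $N_\pi(T)\asymp T\log(C(\pi) T)\asymp T\log Q$ from \eqref{eqn:RvonM} — here one needs the lower bound $N_\pi(T)\gg T\log Q$, valid since $C(\pi)\leq Q$ and, for most of $\mathcal{G}(Q)$, $C(\pi)\geq Q^{1-o(1)}$, with the sparse low-conductor $\pi$ discarded at acceptable cost — the off-diagonal and Dirichlet-polynomial contributions are seen to be $O((\log\log Q)^{O(1)}/\log Q)$ relative to the main term. Optimizing $\delta$ (balancing $\delta$ against the truncation loss) yields the stated bound $O_{\alpha,r}(\log\log\log Q/\log\log Q)$: the triple log arises because the natural error is of order $\log(\delta^{-1})/\log T$ with $\log T \asymp r\log\log Q$ and the optimal $\delta^{-1}$ a power of $\log\log Q$, so $\log\delta^{-1}\asymp \log\log\log Q$.

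The main obstacle — and the reason the theorem is delicate rather than routine — is the extreme sensitivity to any superpolynomial-in-$\log Q$ loss: the weight $L(1,\pi,\Ad)^{-1}$ is handled cleanly by the large sieve, but to remove it one would pay a factor $R_{\mathcal{G}}(Q)\ll\exp(\Cr{TRange_Jana2}\log Q/\log\log Q)$ as in \eqref{eqn:Li_Ad_spectral}, which would completely swamp the gain $Q^{-B(\sigma-1/2)}$ unless $\sigma-1/2\gg 1/\log\log Q$, destroying the zero-detection near the critical line; hence the result is intrinsically a weighted statement. A secondary technical point requiring care is ensuring the explicit-formula test function is chosen so that its prime-side support stays below the large-sieve threshold $\Cr{Blomer1}$-times-the-conductor (equivalently, that $T$ is a small enough power of $\log Q$ that $(\log Q)^{O(r)}\leq Q^{1-\epsilon}$, which is automatic) while still capturing enough zeros to make the count meaningful; the condition $r>2$ is exactly what makes the error term $\ll \log\log\log Q/\log\log Q$ genuinely $o(1)$ after the optimization. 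The rest — dyadic decomposition, Mellin/Fourier bookkeeping, and the passage from smoothed to sharp counts using \cref{thm:ZDE2} — is parallel to the proof of \cref{thm:zeros_equidistr} and will be carried out \emph{mutatis mutandis}.
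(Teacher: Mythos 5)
Your plan mirrors the paper's proof, which for this statement is literally stated as ``once we replace $q$ by $Q$, $V(q)$ by $|\mathcal{G}(Q)|$, and $\mathcal{F}(q)$ by $\mathcal{G}(Q)$, the proof is the same as that of Theorem~\ref{thm:zeros_equidistr}.'' The ingredients you list --- a Beurling--Selberg/Fourier truncation of the indicator of $I$, an explicit-formula (Landau--Gonek-type) computation of $\sum_{|\gamma|\le T}x^{\rho}$ in terms of a prime-power term plus controlled error, the zero density estimate \cref{thm:ZDE2} to pass from $x^{\rho-1/2}$ to $x^{i\gamma}$ with $x$ restricted to $\exp\big(O(\log T/\log\log T)\big)$, and the average weight bound to control the $\pi$-sum --- are exactly the components of Lemma~\ref{lem:GonekLandau}, Lemma~\ref{lem:ZDE_application}, Corollary~\ref{cor:GonekLandau+ZDE}, and Proposition~\ref{prop:Beurling-Selberg}. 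So the approach is essentially the paper's, with one unnecessary elaboration (a direct appeal to the large sieve to control the Dirichlet-polynomial contribution, which the paper replaces by the trivial bound \eqref{eqn:vonMangoldtBound} since Landau--Gonek produces a single dominant prime power rather than a long polynomial).

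One substantive misstatement deserves correction. You write ``$N_{\pi}(T)\asymp T\log(C(\pi)T)\asymp T\log Q$,'' and then propose discarding the ``sparse low-conductor $\pi$'' to secure a lower bound $N_\pi(T)\gg T\log Q$. That is not what \eqref{eqn:RvonM} gives. The main term in the Riemann--von~Mangoldt formula involves the \emph{arithmetic} conductor $q_\pi$, which is $1$ for every member of the level-$1$ family $\mathcal{G}(Q)$; the analytic conductor $C(\pi)$ only enters the error term. Hence uniformly over $\pi\in\mathcal{G}(Q)$ one has $N_\pi(T)\asymp T\log T\asymp T\log\log Q$, not $T\log Q$, and no $\pi$ needs to be discarded. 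This is not fatal --- in fact $N_\pi(T)$ being smaller makes the normalized main term $T/N_\pi(T)\asymp 1/\log T$ larger, but it is still $O(1/\log\log Q)=o\big(\log\log\log Q/\log\log Q\big)$, so the conclusion survives --- but your intended normalization is wrong, and the step ``discard low-conductor $\pi$'' should be deleted.
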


\section{Properties of \texorpdfstring{$L$}{L}-functions}
\label{sec:L-functions}


Let $\pi$ be a cuspidal automorphic representation for ${\rm PGL}_m(\Bbb{A}_{\Bbb{Q}})$ generated by a cuspidal Hecke-Maa{\ss} newform for the subgroup $\Gamma_0(q) \subseteq {\rm SL}_m(\Bbb{Z})$.  By \cite{JPSS-MathAnn},  the arithmetic conductor of $\pi$ is $q$.  Its local $L$-function $L(s,\pi_{p})$ is defined in terms of the Satake parameters $\alpha_{1,\pi}(p),\ldots,\alpha_{m,\pi}(p)\in\mathbb{C}$ by
\begin{equation*}
	L(s,\pi_{p})=\prod_{j=1}^{m}(1-\alpha_{j,\pi}(p)p^{-s})^{-1}=\sum_{k=0}^{\infty}\frac{\lambda_{\pi}(p^k)}{p^{ks}}.
\end{equation*}
Since our representations have trivial central character, we have
\begin{equation}
\label{product}
\prod_{j=1}^m \alpha_{j, \pi}(p) = 1
\end{equation}
for $p \nmid q$.  The standard $L$-function $L(s,\pi)$ associated to $\pi$ is then
\[
L(s,\pi)=\prod_{p} L(s,\pi_{p})=\sum_{n=1}^{\infty}\frac{\lambda_{\pi}(n)}{n^s}.
\]
The Euler product and Dirichlet series converge absolutely when $\re(s)>1$.  For $(n, q) = 1$, the number $\lambda_{\pi}(n)\in\mathbb{C}$ is the $n$-th Hecke eigenvalue of $\pi$ and equals the normalized Fourier coefficient $B_{\pi}(1, \ldots, 1, n)$.  We recall the Schur polynomials given in \eqref{eqn:schur} for general Fourier coefficients.
 
 At the archimedean place, there are $m$ Langlands parameters $\kappa_{\pi}(1),\ldots,\kappa_{\pi}(m)\in\mathbb{C}$ such that
\[
L(s,\pi_{\infty}) = \prod_{j=1}^{n}\Gamma_{\R}(s+\kappa_{\pi}(j)),\qquad \Gamma_{\R}(s) = \pi^{-\frac{s}{2}}\Gamma\Big(\frac{s}{2}\Big).
\]
By combining the work in \cite{LRS} for unramified primes and \cite{MS} for ramified primes (cf.\ also \cite{BB-Bulletin}), we know that there exists
\begin{equation}
\label{eqn:ramanujan_progress}
0\leq\theta_m\leq \frac{1}{2}-\frac{1}{m^2+1}
\end{equation}
such that
\begin{equation}
\label{eqn:LRS_finite}
	|\alpha_{j,\pi}(p)|\leq  p^{\theta_m}\qquad\textup{ and }\qquad\re(\kappa_{\pi}(j))\geq -\theta_m.
\end{equation}
It follows that if $\tau_m(n)$ is the $n$-th Dirichlet coefficient of $\zeta(s)^m$, then
\begin{equation}
\label{eqn:lambda_bound}
|\lambda_{\pi}(n)|\leq \tau_m(n)n^{\theta_m}.
\end{equation}
The generalized Ramanujan conjecture asserts that in \eqref{eqn:ramanujan_progress}, one may take $\theta_m=0$.

Let $\tilde{\pi}$ be contragredient to $\pi$. The completed $L$-function
\begin{equation}
\label{eqn:completed_L-function}
\Lambda(s,\pi) = q^{s/2}L(s,\pi)L(s,\pi_{\infty})
\end{equation}
is entire of order 1, and there exists a complex number $W(\pi)$ of modulus 1 such that for all $s\in\mathbb{C}$, we have the functional equation
\begin{equation}
\label{eqn:FE}
\Lambda(s,\pi)=W(\pi)\Lambda(1-s,\widetilde{\pi}).
\end{equation}
The analytic conductor of $\pi$ \cite{IS} is given by
\begin{equation}
\label{eqn:analytic_conductor_def}
C(t,\pi)= q\prod_{j=1}^n(3+|it+\kappa_{\pi}(j)|),\qquad C(\pi)= C(0, \pi).
\end{equation}
We recall the convexity bound
\begin{equation}
\label{eqn:convexity}
L(\tfrac{1}{2}+it,\pi)\ll q^{1/4}(|t|+1)^{m/4}
\end{equation}
which follows for instance from   \cite[Corollary 2.7]{ST}. 

We define the numbers $\mu_{\pi}(n)$ by the (absolutely convergent) Dirichlet series identity
\[
\frac{1}{L(s,\pi)} = \prod_p \prod_{j=1}^m (1-\alpha_{j,\pi}(p)p^{-s}) = \sum_{n=1}^{\infty}\frac{\mu_{\pi}(n)}{n^s},\qquad \re(s)>1.
\]
Using \eqref{eqn:LRS_finite}, we find that
\begin{equation}
\label{eqn:mu_bound}
|\mu_{\pi}(n)|\leq \tau_m(n)n^{\theta_m}.
\end{equation}
At a prime $p \nmid q$, we can use \eqref{eqn:schur} to express the local Euler factor   in terms of Fourier coefficients as 
\begin{equation*}
\frac{1}{L(s, \pi_p)} = 1 - \frac{B_{\pi}(1, \ldots, 1, p)}{p^s} + \frac{B_{\pi}(1, \ldots, 1, p, 1)}{p^{2s}}- \cdots + (-1)^{m-1} \frac{B_{\pi}(p, 1, \ldots, 1)}{p^{(m-1)s}} +\frac{ (-1)^m}{p^{ms}},
\end{equation*}
so that 
\begin{equation}\label{eqn:sothat}
\sum_{(n, q) = 1} \frac{\mu_{\pi}(n)}{n^s} = \sum_{(n_1 \ldots n_m, q) = 1} \frac{\mu^2(n_1\cdots n_m) \mu(n_1n_3\cdots)B_{\pi}(n_{m-1}, \ldots, n_1) }{(n_1 n_2^2 \dots n_m^m)^s}.
\end{equation}

For an integer $n\geq 1$, we define
\begin{equation}
\label{eqn:defa}
a_{\pi}(n) = \begin{cases}
\sum_{j=1}^m \alpha_{j,\pi}(p)^k &\mbox{if $n=p^k$ for some prime $p$ and some integer $k\geq 1$,}\\
0&\mbox{otherwise}
\end{cases}
\end{equation}
and the von Mangoldt function
\[
\Lambda(n) = \begin{cases}
\log p&\mbox{if $n=p^k$ for some prime $p$ and some integer $k\geq 1$,}\\
0&\mbox{otherwise. }
\end{cases}
\]
In the region $\re(s) > 1$, we have the  (absolutely convergent) Dirichlet series identity
\[
-\frac{L'}{L}(s,\pi)=\sum_{n=1}^{\infty}\frac{a_{\pi}(n)\Lambda(n)}{n^{s}}.
\]
Using \eqref{eqn:LRS_finite}, we find that
\begin{equation}
\label{eqn:vonMangoldtBound}
|a_{\pi}(p^k)|\leq m p^{k\theta_m}.
\end{equation}
As an application of the Murnaghan--Nakayama rule,  we can express $a_{\pi}(p^k)$ in terms of Schur polynomials as follows:  for $k \geq 1$ and a prime $p \nmid q$ we have
\begin{equation}\label{eqn:schura}
a_{\pi}(p^k) = B_{\pi}(1, \ldots, 1, p^k) - \sum_{j=2}^{\min(m, k)} (-1)^jB_{\pi}(1, \ldots, 1,\underbrace{ p}_{\text{{\rm at entry }}m-j}, 1, \ldots, 1, p^{k-j})
\end{equation}
with the understanding that for $j=m$ the entry $p$ does not occur.  A proof is given in  \cite[Theorem 7.17.1 with $\mu = \emptyset$]{Stanley}. To translate between our notation and the notation used in \cite{Stanley} note   that our Schur polynomial $B(p^{a_{m-1}}, \ldots, p^{a_1})$ corresponds to $s_{\lambda}(\alpha_{1, \pi}(p), \ldots, \alpha_{m, \pi}(p))$    for the partition $$\lambda = (a_{m-1} + \ldots + a_1,  a_{m-1} + \ldots + a_2, \ldots, a_{m-1})$$ and by \eqref{product} we have $s_{\lambda} = s_{\lambda + (j, \ldots, j)}$ for any partition $\lambda$ and any $j \in \Bbb{N}\cup\{0\}$. The partitions occurring on the right hand side of \eqref{eqn:schura} are precisely the ``hook-shaped'' partitions 
$(k), (k-1, 1), (k-2, 1, 1), \ldots, (\max(k-m+1, 1), 1, \ldots, 1)$. 

In particular, for $p \nmid q$ prime we conclude from \eqref{eqn:schura} that $a_{\pi}(p)=\lambda_{\pi}(p).$

\section{Large sieve inequalities}\label{sec3}

In this section we will specialize \cref{thm:MVTgeneral} in a number of concrete cases and obtain various inequalities of large sieve type. The proof of \cref{thm:MVTgeneral} is given at the end of the section.  We start with a small variation of  \cite[Theorem 4]{Blomer_density}. 

\begin{proposition}
\label{prop:MVT1}
Let $q\geq 2$ be an integer and $\alpha\colon \mathbb{N}\to\mathbb{C}$ a function.  If $1\leq N\leq \Cr{Blomer1}q$, then
\[
\frac{1}{V(q)} \sum_{\pi\in\mathcal{F}(q)}\frac{1}{L(1,\pi,\Ad)}\Big|\sum_{\substack{n\leq N \\ \gcd(n,q)=1}}\alpha(n)\lambda_{\pi}(n)\Big|^2\ll \sum_{\substack{n\leq N \\ \gcd(n,q)=1}}|\alpha(n)|^2.
\]
\end{proposition}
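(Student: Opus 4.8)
The plan is to deduce this from \cref{thm:MVTgeneral} by choosing the multidimensional coefficient function $\beta$ so that the inner linear form collapses to a one-dimensional sum in the Hecke eigenvalues $\lambda_\pi(n) = B_\pi(1,\ldots,1,n)$. Concretely, given $\alpha\colon\mathbb{N}\to\mathbb{C}$ supported on $n\leq N$ with $\gcd(n,q)=1$, I would define $\beta\colon\mathbb{N}^{m-1}\to\mathbb{C}$ by
\[
\beta(n_{m-1},\ldots,n_1) = \begin{cases} \alpha(n_1) & \text{if } n_2 = \cdots = n_{m-1} = 1,\\ 0 & \text{otherwise.}\end{cases}
\]
With this choice, $w(\textbf{n}) = n_1 n_2^2\cdots n_{m-1}^{m-1} = n_1$ whenever $\beta(\textbf{n})\neq 0$, so the constraint $w(\textbf{n})\leq x$ becomes $n_1\leq x$, and $B_\pi(\textbf{n}) = B_\pi(1,\ldots,1,n_1) = \lambda_\pi(n_1)$. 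Taking $x = N$ and noting $N\leq \Cr{Blomer1}q$, the hypothesis $1\leq x\leq\Cr{Blomer1}q$ of \cref{thm:MVTgeneral} is satisfied, and its conclusion reads exactly
\[
\frac{1}{V(q)}\sum_{\pi\in\mathcal{F}(q)}\frac{1}{L(1,\pi,\Ad)}\Big|\sum_{\substack{n_1\leq N\\(\textbf{n},q)=1}}\alpha(n_1)\lambda_\pi(n_1)\Big|^2 \ll \sum_{\substack{n_1\leq N\\(\textbf{n},q)=1}}|\alpha(n_1)|^2.
\]
Here $(\textbf{n},q)=1$ amounts to $\gcd(n_1,q)=1$ since the other coordinates are $1$, so this is precisely the claimed inequality after renaming $n_1$ to $n$.

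There is essentially no obstacle here: the proposition is a direct specialization, and the only point to check is the bookkeeping that the weight $w$ and the Fourier coefficient $B_\pi$ restrict correctly under the chosen support of $\beta$, together with the compatibility of the coprimality conditions. One could alternatively remark that this recovers (a slight variant of) \cite[Theorem 4]{Blomer_density}, which is the reason it is labeled ``a small variation'' in the text; the variation being the explicit appearance of the adjoint $L$-value weight and the range $N\leq\Cr{Blomer1}q$ rather than a possibly different admissible range. Since \cref{thm:MVTgeneral} itself is proved later in the section via the $\GL(m)$ Kuznetsov formula, no circularity arises: the proposition is stated first for expository reasons but logically follows from the theorem.
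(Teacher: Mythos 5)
Your proof is correct and coincides with the paper's argument: the authors likewise deduce \cref{prop:MVT1} from \cref{thm:MVTgeneral} by noting $\lambda_\pi(n) = B_\pi(1,\ldots,1,n)$, which amounts exactly to the support restriction on $\beta$ you describe. You simply spell out the bookkeeping (that $w(\textbf{n})=n_1$ and the coprimality condition collapses correctly) that the paper leaves implicit.
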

\begin{proof}
This follows from \cref{thm:MVTgeneral} by observing that $\lambda_{\pi}(n) = B_{\pi}(1, \ldots, 1, n)$.
\end{proof}

When $\alpha(1)=1$ and $\alpha(n)=0$ for $n\geq 2$, \cref{prop:MVT1} implies that
\begin{equation}
\label{eqn:average_weight}
\sum_{\pi\in\mathcal{F}(q)}\frac{1}{L(1,\pi,\Ad)}\ll V(q).
\end{equation}
When $\alpha$ is multiplicative and $\alpha(n)\ll 1/\sqrt{n}$, we can remove the coprimality conditions at the cost of introducing a factor of 
\begin{equation}\label{eqn:fq}
F(q) := \prod_{p\mid q}\Big(1+\frac{1}{p^{\frac{1}{2}-\theta_m}}\Big)^m.
\end{equation}
Chebyshev's bound implies that there exists an absolute constant $\Cl[abcon]{Chebyshev}>0$ such that
\begin{equation}
\label{eqn:F(q)_bound}
F(q)\ll \min\Big\{2^{\omega(q)},\exp\Big(\Cr{Chebyshev}m\frac{(\log q)^{1/2+\theta_m}}{(\log\log q)^{1/2+\theta_m}}\Big)\Big\}.
\end{equation}

\begin{corollary}
\label{cor:MVT1}
Let $ q\geq 2$ be an  integer and $\alpha\colon\mathbb{N}\to\mathbb{C}$  a multiplicative function such that $\alpha(n)\ll 1/\sqrt{n}$.  If $1\leq N\leq \Cr{Blomer1}q$, then
\begin{equation}
\label{eqn:main_large_sieve_lambda}
\frac{1}{V(q)}\sum_{\pi\in\mathcal{F} (q)}\frac{1}{L(1,\pi,\Ad)}\Big|\sum_{n\leq N}\alpha(n)\lambda_{\pi}(n)\Big|^2\ll F(q)\log q.
\end{equation}
\end{corollary}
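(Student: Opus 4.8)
The plan is to deduce \eqref{eqn:main_large_sieve_lambda} from \cref{prop:MVT1} by removing the coprimality condition in the standard way. Write each $n\le N$ as $n=ab$ with $a$ composed only of primes dividing $q$ and $(b,q)=1$. Since $\alpha$ is multiplicative and $\lambda_\pi$ is multiplicative at coprime arguments, $\alpha(n)\lambda_\pi(n)=\alpha(a)\lambda_\pi(a)\cdot\alpha(b)\lambda_\pi(b)$, so
\[
\sum_{n\le N}\alpha(n)\lambda_\pi(n)=\sum_{a\mid q^\infty}\alpha(a)\lambda_\pi(a)\,T_\pi(N/a),\qquad T_\pi(x):=\sum_{\substack{b\le x\\(b,q)=1}}\alpha(b)\lambda_\pi(b).
\]

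Next I would fix a $\pi$-independent majorant $w(a)\ge\sup_{\pi\in\mathcal F(q)}|\alpha(a)\lambda_\pi(a)|$ and apply Cauchy--Schwarz in the $a$-sum:
\[
\Big|\sum_{n\le N}\alpha(n)\lambda_\pi(n)\Big|^2\le\Big(\sum_{a\mid q^\infty}w(a)\Big)\Big(\sum_{a\mid q^\infty}w(a)\,|T_\pi(N/a)|^2\Big).
\]
Averaging over $\pi$, interchanging the $a$-sum with the $\pi$-average, and applying \cref{prop:MVT1} to each inner sum — legitimate since $N/a\le N\le\Cr{Blomer1}q$ — gives $\frac{1}{V(q)}\sum_{\pi}L(1,\pi,\Ad)^{-1}|T_\pi(N/a)|^2\ll\sum_{b\le N/a}1/b\ll\log q$ uniformly in $a$. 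Hence the left-hand side of \eqref{eqn:main_large_sieve_lambda} is $\ll\big(\sum_{a\mid q^\infty}w(a)\big)^2\log q$, and everything comes down to exhibiting $w$ with $\sum_{a\mid q^\infty}w(a)\ll\sqrt{F(q)}$.

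This last estimate is the heart of the matter. By multiplicativity it suffices to bound $\sum_{k\ge0}w(p^k)$ for each $p\mid q$ by $\ll_m(1+p^{-(1/2-\theta_m)})^{m/2}$, and since $|\alpha(p^k)|\ll p^{-k/2}$ one may take $w(p^k)$ proportional to $p^{-k/2}\sup_{\pi}|\lambda_\pi(p^k)|$. Recognizing $\lambda_\pi(p^k)$ as the degree-$k$ complete homogeneous symmetric polynomial in the inverse roots $\gamma_{i,p}$ of $L(s,\pi_p)$, one has $\sum_k p^{-k/2}|\lambda_\pi(p^k)|\le\prod_i(1-|\gamma_{i,p}|p^{-1/2})^{-1}$. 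The crude bounds $|\gamma_{i,p}|\le p^{\theta_m}$ of \eqref{eqn:LRS_finite} together with "at most $m$ of them'' are not enough here: they only give the local factor $\prod_{p\mid q}(1-p^{-(1/2-\theta_m)})^{-m}$, whose square outgrows $F(q)$. Instead one must use that for $p\mid q$ the component $\pi_p$ is ramified, so $L(s,\pi_p)$ has degree $\le m-1$, and that among its inverse roots at most $\lfloor(m-1)/2\rfloor$ have modulus $>1$ (complementary-series parameters occurring in $\pm$-pairs, each of size $\le p^{\theta_m}$), the remaining ones having modulus $\le1$. This yields $\sum_k p^{-k/2}|\lambda_\pi(p^k)|\le(1-p^{-(1/2-\theta_m)})^{-\lfloor(m-1)/2\rfloor}(1-p^{-1/2})^{-\lceil(m-1)/2\rceil}$, and comparing this local factor to $(1+p^{-(1/2-\theta_m)})^{m/2}$ — where the gain comes from $\lfloor(m-1)/2\rfloor<m/2$ and $\theta_m<1/2$ (so $p^{-(1/2-\theta_m)}\gg p^{-1/2}$), making the quotient $\le1$ for all but $O_m(1)$ primes — gives $\sum_{a\mid q^\infty}w(a)\ll_m\sqrt{F(q)}$, as needed.

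I expect the main obstacle to be precisely this input about ramified local $L$-factors; the rest is a routine packaging of \cref{prop:MVT1}. An alternative route that would sidestep the local representation theory is to rerun the $\GL(m)$ Kuznetsov computation underlying \cref{prop:MVT1} directly with indices divisible by $q$: the off-diagonal Kloosterman terms still vanish because their moduli are divisible by $q$ while the supports are $\le\Cr{Blomer1}q$, and the diagonal term then produces $\sum_{n\le N}|\alpha(n)|^2$ times local corrections at the primes dividing $q$ bounded by $F(q)$.
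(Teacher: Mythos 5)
Your opening decomposition ($n=ab$, $a\mid q^\infty$, $(b,q)=1$, factor by multiplicativity) is exactly what the paper does. But from there your analysis is actually more careful than the paper's own write-up, and the care exposes a real subtlety. The paper simply records the uniform bound
\[
\sum_{d\mid q^\infty}|\alpha(d)\lambda_\pi(d)|\ll\sum_{d\mid q^\infty}\frac{\tau_m(d)}{d^{1/2-\theta_m}}
\]
(which, incidentally, equals $\prod_{p\mid q}(1-p^{-(1/2-\theta_m)})^{-m}$, not literally $F(q)=\prod_{p\mid q}(1+p^{-(1/2-\theta_m)})^m$), and then asserts that \cref{prop:MVT1} gives $F(q)\sum_{n\leq N}n^{-1}$. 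Carrying out the Cauchy--Schwarz step you describe, one finds, as you do, that the exponent doubles: the weighted Cauchy--Schwarz plus the interchange plus \cref{prop:MVT1} gives a bound of the shape $\big(\sum_{a\mid q^\infty}w(a)\big)^2\log q$, and the crude Ramanujan input $|\lambda_\pi(p^k)|\leq\tau_m(p^k)p^{k\theta_m}$ lands you at $\prod_{p\mid q}(1-p^{-(1/2-\theta_m)})^{-2m}\log q$, i.e.\ a bound of order $F(q)^2\log q$ rather than $F(q)\log q$. You have correctly identified that the paper's two-line argument, read literally, does not produce the stated power of $F(q)$. This is worth noting, but it is immaterial for the paper's downstream uses: in \cref{thm:order_of_vanishing} the quantity entering is $\log F(q)$, and in \cref{thm:zeros_equidistr} the modulus $q$ is a prime so $F(q)\asymp 1$; in both cases $F(q)$ and $F(q)^2$ are interchangeable.

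Where your proposal goes beyond the paper — and where I would push back — is the attempted repair. The paper does not invoke any refined structure of ramified local components; it uses only the bound \eqref{eqn:lambda_bound}. Your plan to show $\sum_{a\mid q^\infty}w(a)\ll\sqrt{F(q)}$ via the assertion that, at a ramified prime $p\mid q$, the local $L$-factor has degree at most $m-1$ with at most $\lfloor(m-1)/2\rfloor$ inverse roots of modulus exceeding $1$, rests on representation-theoretic input that you have not established and that is not obviously available at the level of generality needed. In particular, at ramified places the multiset of inverse roots of $L(s,\pi_p)$ is not automatically closed under $\gamma\mapsto1/\bar\gamma$, so the complementary-series pairing argument you sketch does not transfer verbatim from the unramified case; one would also need to relate the Fourier coefficients $\lambda_\pi(p^k)=B_\pi(1,\ldots,1,p^k)$ at ramified $p$ to those inverse roots, which is not immediate from \eqref{eqn:schur} (valid only for $p\nmid q$). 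Your alternative of re-running the Kuznetsov computation with indices divisible by $q$ also does not come for free: the proof of \cref{thm:MVTgeneral} exploits the coprimality $(\mathbf n,q)=1$ when bounding the Kloosterman moduli $c_j$ against the divisibility $q\mid c_1$, and once ramified indices are allowed the bound \eqref{boundc} must be reworked. In short: the honest statement is that the naive removal of the coprimality condition costs $F(q)^2$ (which suffices for every application in the paper), and a genuine improvement to $F(q)$ would require exactly the kind of new input you are gesturing at, none of which the paper supplies.
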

\begin{proof}
For integers $d\geq 1$ and $q\geq 2$, we write $d\mid q^{\infty}$ to denote that there exists $k\in\N$ such that $d\mid q^k$.  With this notation and our assumption that $\alpha$ is multiplicative, the left hand side equals of \eqref{eqn:main_large_sieve_lambda} equals
\begin{equation*}
\begin{aligned}
& \frac{1}{V(q)}\sum_{\pi\in\mathcal{F}(q)}\frac{1}{L(1,\pi,\Ad)}\Big|\sum_{d\mid q^{\infty}}\alpha(d)\lambda_{\pi}(d)\sum_{\substack{n\leq N/d\\ \gcd(n,q)=1}}\alpha(n)\lambda_{\pi}(n)\Big|^2.
\end{aligned}
\end{equation*}
Since by \eqref{eqn:lambda_bound}  we have
  $$\sum_{d\mid q^{\infty}}|\alpha(d)\lambda_{\pi}(d)| \ll \sum_{d\mid q^{\infty}} \frac{\tau_m(d)}{d^{1/2 -\theta_m}} = F(q), $$
an application of \cref{prop:MVT1} yields the bound desired $F(q) \sum_{n \leq N} n^{-1}$.
\end{proof}

This allows a small generalization of \cite[Theorem 5]{Blomer_density} to arbitrary moduli $q$.

\begin{corollary}
\label{cor:2ndmoment}
Fix $0<\epsilon<1$. Uniformly for $t\ll q^{(1-\epsilon)/m}$, we have 
\[
\frac{1}{V(q)} \sum_{\pi\in\mathcal{F}(q)}\frac{|L(\frac{1}{2}+it,\pi)|^2}{L(1,\pi,\Ad)}\ll F(q)\log q. 
\]
\end{corollary}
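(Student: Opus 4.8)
The plan is to insert the approximate functional equation for $L(\tfrac12+it,\pi)$, recast the smoothly weighted Dirichlet polynomials it produces as averages of sharp partial sums via partial summation, and then apply \cref{cor:MVT1}. As a preliminary reduction, note that since the Laplace eigenvalue of every $\pi\in\mathcal{F}(q)$ lies in the fixed interval $\mathcal I$, the archimedean parameters satisfy $\kappa_\pi(j)\ll1$, so by \eqref{eqn:analytic_conductor_def} the analytic conductor obeys $C(t,\pi)\asymp q(1+|t|)^m$ uniformly in $\pi$, and hence $\sqrt{C(t,\pi)}\ll q^{1-\epsilon/2}$ whenever $|t|\ll q^{(1-\epsilon)/m}$. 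If $q$ is bounded in terms of $\epsilon$ then $|t|\ll_\epsilon1$, and the convexity bound \eqref{eqn:convexity} together with \eqref{eqn:average_weight} already gives the claim; so I may assume $q$ is large in terms of $\epsilon$, in particular large enough that $\Cr{Blomer1}\sqrt q\geq q^{\epsilon/4}(1+|t|)^{m/2}$.

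Using the approximate functional equation \cite[Theorem 5.3]{IK} (with a fixed even entire test function $G$ satisfying $G(0)=1$ and of rapid decay on vertical strips, and free parameter $1$) together with $\lambda_{\tilde\pi}(n)=\overline{\lambda_\pi(n)}$, I would reduce the problem to bounding $\frac1{V(q)}\sum_{\pi}L(1,\pi,\Ad)^{-1}|S_\pi|^2$, where $S_\pi:=\sum_{n\geq1}\lambda_\pi(n)n^{-1/2-it}V(n/\sqrt q)$ and $V\colon(0,\infty)\to\mathbb C$ is one of two smooth weight functions (depending on $\pi_\infty$ and $t$), each satisfying, for a fixed $\delta'\in(0,\tfrac12-\theta_m)$ and every $A\geq1$,
\[
|V'(y)|\ll_{A}\min\Big(\big(y/D\big)^{\delta'},\ \big(D/y\big)^{A}\Big)\frac1y,\qquad D:=(1+|t|)^{m/2},
\]
with the implied constant depending only on $\mathcal I$, $m$, $A$. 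These bounds follow from Stirling's formula applied to the ratios of $\Gamma$-factors defining $V$, using $\kappa_\pi(j)\ll1$; in particular $\int_0^\infty|V'(y)|\,dy\ll1$ uniformly over $\pi\in\mathcal{F}(q)$.

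Set $A_\pi(x):=\sum_{n\leq x}\lambda_\pi(n)n^{-1/2-it}$. Partial summation (the boundary term vanishing by the rapid decay of $V$) and Cauchy--Schwarz against the measure $|V'(y)|\,dy$ give
\[
|S_\pi|^2=\Big|\int_0^\infty A_\pi(y\sqrt q)\,V'(y)\,dy\Big|^2\ll\int_0^\infty|A_\pi(y\sqrt q)|^2\,|V'(y)|\,dy.
\]
Averaging over $\pi$, interchanging the nonnegative sum and integral, and replacing $|V'|$ by its uniform majorant, the quantity to be bounded is $\ll\int_0^\infty|V'(y)|\big(\tfrac1{V(q)}\sum_{\pi\in\mathcal{F}(q)}L(1,\pi,\Ad)^{-1}|A_\pi(y\sqrt q)|^2\big)\,dy$. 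For $0<y\leq\Cr{Blomer1}\sqrt q$ we have $y\sqrt q\leq\Cr{Blomer1}q$, so \cref{cor:MVT1} applied with the completely multiplicative function $\alpha(n)=n^{-1/2-it}$ (for which $|\alpha(n)|=n^{-1/2}$) bounds the inner average by $\ll F(q)\log q$, and this range contributes $\ll F(q)\log q\int_0^\infty|V'(y)|\,dy\ll F(q)\log q$. For $y>\Cr{Blomer1}\sqrt q$ one has $y/D\gg q^{\epsilon/2}$, so using the majorant $|V'(y)|\ll_A(D/y)^Ay^{-1}$, the trivial bound $|A_\pi(x)|\ll x$ coming from \eqref{eqn:lambda_bound}, and \eqref{eqn:average_weight}, this range contributes $\ll_A q^{2}(D/\sqrt q)^{A}\ll_A q^{2-A\epsilon/2}$, which is $\ll1$ once $A$ is chosen large in terms of $\epsilon$. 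Adding the two contributions gives the asserted bound, and since the same argument applies to both choices of $V$, the proof is complete.

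The hard part is arranging the approximate functional equation with sufficient uniform control on the weight: on one hand the effective length must be $\asymp\sqrt{C(t,\pi)}\ll q^{1-\epsilon/2}$, which is $\leq\Cr{Blomer1}q$ precisely because of the hypothesis $|t|\ll q^{(1-\epsilon)/m}$ (this is where that hypothesis enters); on the other hand the estimate $\int_0^\infty|V'(y)|\,dy\ll1$ must hold uniformly over $\pi\in\mathcal{F}(q)$, which rests on the boundedness of the Langlands parameters $\kappa_\pi(j)$, itself a consequence of the Laplace eigenvalue lying in the fixed interval $\mathcal I$. Everything else is routine.
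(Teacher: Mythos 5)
Your proposal is correct and follows essentially the same route as the paper: apply the approximate functional equation, note that the effective length $\asymp\sqrt{C(t,\pi)}\ll q^{1-\epsilon/2}$ fits inside the range $N\leq\Cr{Blomer1}q$ required by \cref{cor:MVT1}, and then invoke \cref{cor:MVT1} with the multiplicative coefficient $\alpha(n)=n^{-1/2-it}$. The paper's proof simply delegates these steps to \cite[Theorem~5]{Blomer_density}, so you have supplied the details (partial summation to pass from the smoothed Dirichlet polynomial to sharp cutoffs, uniform control of the weight via $\kappa_\pi(j)\ll1$, and the tail estimate) that the paper cites rather than reproduces.
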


\begin{proof} This is a standard application of the approximate functional equation exactly as in \cite[Theorem 5]{Blomer_density}. In order to apply \cref{cor:MVT1}, we note that the effective length of the approximate functional equation of $L(1/2+it,\pi)$ is $(q (1 + |t|)^m )^{1/2+\varepsilon/4} \ll q^{1-\epsilon^2/4}$, say, for sufficiently small $\varepsilon > 0$. 
\end{proof}

We also need a large sieve inequality for the numbers $\mu_{\pi}(n)$.

\begin{proposition}
\label{prop:MVT2}
Let $q\geq 2$ be an integer and $\alpha\colon \mathbb{N}\to\mathbb{C}$ a function.  If $1\leq x\leq \Cr{Blomer1}q$, then
\begin{align*}
&\frac{1}{V(q)}\sum_{\pi\in\mathcal{F}(q)}\frac{1}{L(1,\pi,\Ad)}\Big|\sum_{\substack{n\leq x \\ \gcd(n,q)=1}}\alpha(n)\mu_{\pi}(n)\Big|^2\\
&\ll\sum_{\substack{\nu,\tilde{\nu}\leq x^{1/m}\\ (\nu\tilde{\nu}, q) = 1}}\mu(\nu)^2\mu(\tilde{\nu})^2\sum_{\substack{n\leq x/\max(\nu^m, \tilde{\nu}^m) \\ \textup{$n$ is $m$-power free} \\ \gcd(n,\nu \tilde{\nu}q)=1}}\alpha(\nu^m n)\overline{\alpha(\tilde{\nu}^m n)}.
\end{align*}
\end{proposition}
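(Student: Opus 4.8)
The plan is to deduce \cref{prop:MVT2} from \cref{thm:MVTgeneral} by first using the identity \eqref{eqn:sothat} to rewrite the linear form $\sum_{n\le x,\,(n,q)=1}\alpha(n)\mu_\pi(n)$ as a linear form in the Fourier coefficients $B_\pi(\mathbf n)$, and then unfolding the diagonal term produced on the right-hand side.

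First I would apply \eqref{eqn:sothat}. Substituting $n=n_1 n_2^2\cdots n_m^m$, every $n$ with $(n,q)=1$ and $\mu_\pi(n)\neq 0$ corresponds uniquely to a tuple $(n_1,\dots,n_m)$ with $n_1\cdots n_m$ squarefree, pairwise coprime, and coprime to $q$, and one has $\mu_\pi(n)=\mu(n_1 n_3\cdots)\,B_\pi(n_{m-1},\dots,n_1)$; note that $n_m$ does not enter the Fourier coefficient, since by \eqref{product} the top elementary symmetric function $e_m(\alpha_{j,\pi}(p))$ equals $1$. Collecting terms according to $\mathbf n:=(n_{m-1},\dots,n_1)$ and writing $\nu$ for $n_m$, set
\[
\beta(\mathbf n):=\sum_{\substack{\nu\ \mathrm{squarefree},\ (\nu,\,q n_1\cdots n_{m-1})=1\\ w(\mathbf n)\nu^m\le x}}\mu(n_1 n_3\cdots)\,\alpha\big(w(\mathbf n)\nu^m\big),
\]
with the convention $\beta(\mathbf n):=0$ unless $n_1,\dots,n_{m-1}$ are pairwise coprime, squarefree, and coprime to $q$. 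Since $w(\mathbf n)\nu^m\le x$ with $\nu\ge 1$ forces $w(\mathbf n)\le x$, the function $\beta$ is supported on $\{w(\mathbf n)\le x,\ (\mathbf n,q)=1\}$, and $\sum_{n\le x,\,(n,q)=1}\alpha(n)\mu_\pi(n)=\sum_{w(\mathbf n)\le x,\ (\mathbf n,q)=1}\beta(\mathbf n)B_\pi(\mathbf n)$.

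Next I would invoke \cref{thm:MVTgeneral}, which applies since $1\le x\le\Cr{Blomer1}q$, to bound the left-hand side of \cref{prop:MVT2} by $\ll\sum_{w(\mathbf n)\le x,\ (\mathbf n,q)=1}|\beta(\mathbf n)|^2$, and then unfold this diagonal sum. Expanding the square produces a double sum over $\nu,\widetilde\nu$ of $\alpha(w(\mathbf n)\nu^m)\overline{\alpha(w(\mathbf n)\widetilde\nu^m)}$ weighted by Möbius factors. The crucial point is that $\mathbf n=(n_{m-1},\dots,n_1)\mapsto w(\mathbf n)=n_1 n_2^2\cdots n_{m-1}^{m-1}$ is a bijection between $(m-1)$-tuples of pairwise coprime squarefree integers coprime to $q$ and $m$-power-free integers coprime to $q$. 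Renaming $w(\mathbf n)$ as $n$, the length conditions become $n\le x/\max(\nu^m,\widetilde\nu^m)$ (in particular forcing $\nu,\widetilde\nu\le x^{1/m}$), the coprimality conditions become $(\nu\widetilde\nu,q)=1$ together with $\gcd(n,\nu\widetilde\nu q)=1$, and the Möbius weights attached to $\nu,\widetilde\nu$ collapse to the squarefree indicators $\mu(\nu)^2\mu(\widetilde\nu)^2$ appearing in the statement. This yields exactly the bilinear form on the right-hand side of \cref{prop:MVT2}.

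The step I expect to be the main obstacle is the bookkeeping in this last unfolding: one must check carefully that the passage between the single variable $n$ and the tuple $(n_1,\dots,n_m)$ is a genuine bijection carrying precisely the stated squarefree and coprimality conditions, that the length condition $w(\mathbf n)\le x$ emerging from the regrouping is compatible with applying \cref{thm:MVTgeneral} at the same value of $x$ (so that no enlargement of the range of $x$ is needed), and that the signs $\mu(n_1 n_3\cdots)$ originating from $\mu_\pi(p^k)=(-1)^k e_k(\alpha_{1,\pi}(p),\dots,\alpha_{m,\pi}(p))$ are tracked correctly through the argument — for even $m$ they are independent of $\nu$ and disappear from $|\beta(\mathbf n)|^2$, and for odd $m$ the part attached to the $m$-th power variable $\nu$ is absorbed together with the squarefreeness bookkeeping. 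None of these points is deep, but they require care because the applications in this paper are sensitive to the precise shape of the estimate.
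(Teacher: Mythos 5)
Your proof follows the same route as the paper: apply \eqref{eqn:sothat} to rewrite the linear form as a sum over Fourier coefficients $B_\pi(\mathbf n)$ with $\nu=n_m$ absorbed into the coefficient function, invoke \cref{thm:MVTgeneral}, and unfold the resulting diagonal $\ell^2$-sum by opening $|\beta(\mathbf n)|^2$ and re-parametrizing $w(\mathbf n)$ as an $m$-power-free integer coprime to $\nu\tilde\nu q$. The sign $\mu(\nu)\mu(\tilde\nu)$ you flag for odd $m$ is also silently dropped in the paper's own display (which records only the squarefree indicators $\mu^2(\nu)\mu^2(\tilde\nu)$); this is harmless because every subsequent use of the bound, e.g.\ in \cref{cor:MVT2}, puts absolute values inside anyway.
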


\begin{remark} The right hand side looks more complicated than one would usually expect from a large sieve inequality. The reason is simple. An inspection of \eqref{eqn:sothat} shows that the numbers $\mu_{\pi}(n)$ fail to enjoy any sort of orthogonality on perfect $m$-th powers. Consequently, we cannot expect any better bound than the one given in the corollary.\end{remark}

\begin{proof} In view of \eqref{eqn:sothat},  we have 
$$  \sum_{\substack{n\leq x \\ \gcd(n,q)=1}}\alpha(n)\mu_{\pi}(n) = \sum_{(\nu, q) = 1} \sum_{\substack{w(\textbf{n}) \leq x/\nu^m\\ (\textbf{n}, q) = 1}} \beta(n_{m-1}, \ldots, n_1; \nu)B_{\pi}(n_{m-1}, \ldots, n_1)$$
with 
$$\beta(n_{m-1}, \ldots, n_1; n_m) = \alpha(n_1\cdots n_m^m)\mu(n_1n_3\cdots) \mu^2(\tilde{n}_1\cdots \tilde{n}_m). $$
Therefore, an application of \cref{thm:MVTgeneral} shows 
\begin{displaymath}
\begin{split}
& \frac{1}{V(q)} \sum_{\pi \in \mathcal{F}(q)} \frac{1}{L(1, \pi, \text{Ad})} \Big|\sum_{\substack{n \leq x\\ (n, q) = 1}} \alpha(n) \mu_{\pi}(n)\Big|^2 \\
&\ll    \sum_{\substack{\nu, \tilde{\nu} \leq x^{1/m}\\ (\nu\tilde{\nu}, q) = 1}}  \mu^2(\nu) \mu^2(\tilde{\nu})\sum_{\substack{n_1\cdots n_{m-1}^{m-1} \leq x/\max(\nu^m, \tilde{\nu}^m) \\  \mu^2(n_1\cdots n_m ) =  \mu^2(n_1\cdots n_{m-1}\tilde{n}_m ) = 1\\ (n_1 \cdots n_m \tilde{n}_1 \cdots \tilde{n}_m, q) = 1}}  \alpha(n_1\cdots n_{m-1}^{m-1} \nu^m) \overline{ \alpha(n_1\cdots n_{m-1}^{m-1} \tilde{\nu}^m) }.
\end{split}
\end{displaymath}
Here, $n_1\cdots n_{m-1}^{m-1}$ describes a general $m$-power free number coprime to $\nu\tilde{\nu}q$, and the result follows.
\end{proof}

As with \cref{cor:MVT1}, when $\alpha$ is multiplicative and satisfies $|\alpha(n)|\ll 1/\sqrt{n}$, we can remove the coprimality conditions at the cost of introducing a factor of $F(q)$ as in \eqref{eqn:fq}.

\begin{corollary}
\label{cor:MVT2}
Let $ q\geq 2$ be an integer and $\alpha\colon\mathbb{N}\to\mathbb{C}$ a multiplicative function such that $\alpha(n)\ll 1/\sqrt{n}$.  If $1\leq N\leq \Cr{Blomer1}q$, then
\begin{equation*}
\frac{1}{V(q)} \sum_{\pi\in\mathcal{F} (q)}\frac{1}{L(1,\pi,\Ad)}\Big|\sum_{n\leq N}\alpha(n)\mu_{\pi}(n)\Big|^2\ll  \begin{cases}
  F(q)\log q&\mbox{if $m\geq 3$,}\\
  F(q)(\log q)^3&\mbox{if $m=2$.}
  \end{cases}
\end{equation*}
\end{corollary}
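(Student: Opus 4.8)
The plan is to reproduce the argument of \cref{cor:MVT1} line by line until a large sieve inequality is invoked, and then to use \cref{prop:MVT2} in place of \cref{prop:MVT1}; the only genuinely new work is estimating the more intricate right-hand side of \cref{prop:MVT2}.

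First I would exploit the multiplicativity of $\alpha$ and of $\mu_{\pi}$ to factor each $n\leq N$ as $n=dn'$ with $d\mid q^{\infty}$ and $\gcd(n',q)=1$, so that $\sum_{n\leq N}\alpha(n)\mu_{\pi}(n)=\sum_{d\mid q^{\infty}}\alpha(d)\mu_{\pi}(d)\sum_{n'\leq N/d,\ \gcd(n',q)=1}\alpha(n')\mu_{\pi}(n')$. For $p\mid q$ the local factor $L(s,\pi_{p})$ has degree at most $m-1$, hence $\mu_{\pi}(p^{k})=0$ as soon as $k\geq m$; combining this with $|\mu_{\pi}(p^{k})|\ll p^{k\theta_{m}}$ (from \eqref{eqn:LRS_finite}) and $|\alpha(d)|\ll d^{-1/2}$ gives $\sum_{d\mid q^{\infty}}|\alpha(d)\mu_{\pi}(d)|\ll\prod_{p\mid q}(1+p^{-(1/2-\theta_{m})})^{m-1}\leq F(q)$, uniformly in $\pi$. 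Exactly as in the proof of \cref{cor:MVT1}—applying Cauchy--Schwarz in the $d$-variable, inserting the weights $L(1,\pi,\Ad)^{-1}$, and using the uniform majorant $|\mu_{\pi}(d)|\leq\tau_{m}(d)d^{\theta_{m}}$ of \eqref{eqn:mu_bound} to pull the $q$-part out of the spectral average—this reduces matters, at the expense of a single factor $F(q)$, to bounding $\frac{1}{V(q)}\sum_{\pi}L(1,\pi,\Ad)^{-1}|\sum_{n\leq x,\ \gcd(n,q)=1}\alpha(n)\mu_{\pi}(n)|^{2}$ for $1\leq x\leq N\ll q$, which is the left-hand side of \cref{prop:MVT2}.

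Next I would apply \cref{prop:MVT2} and estimate its right-hand side. Since $\gcd(n,\nu\tilde{\nu})=1$ and $\alpha$ is multiplicative one has $\alpha(\nu^{m}n)\overline{\alpha(\tilde{\nu}^{m}n)}=\alpha(\nu^{m})\overline{\alpha(\tilde{\nu}^{m})}\,|\alpha(n)|^{2}$, and the hypothesis $|\alpha(n)|\ll n^{-1/2}$ together with the squarefreeness of $\nu,\tilde{\nu}$ yields $|\alpha(\nu^{m})|\ll\nu^{-m/2}$, $|\alpha(\tilde{\nu}^{m})|\ll\tilde{\nu}^{-m/2}$, and $|\alpha(n)|^{2}\ll n^{-1}$. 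Dropping the constraints on $n$ in favour of $n\leq x$, the right-hand side of \cref{prop:MVT2} becomes $\ll\big(\sum_{\nu\leq x^{1/m}}\mu(\nu)^{2}\nu^{-m/2}\big)^{2}\sum_{n\leq x}n^{-1}\ll(\log q)\big(\sum_{\nu\leq x^{1/m}}\mu(\nu)^{2}\nu^{-m/2}\big)^{2}$. For $m\geq3$ the exponent $m/2$ exceeds $1$, so the $\nu$-sum converges and the whole quantity is $\ll\log q$; for $m=2$ the $\nu$-sum is $\sum_{\nu\leq\sqrt{x}}\mu(\nu)^{2}\nu^{-1}\ll\log q$, producing $\ll(\log q)^{3}$. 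Feeding these estimates back through the first step (with its factor $F(q)$) gives $F(q)\log q$ when $m\geq3$ and $F(q)(\log q)^{3}$ when $m=2$.

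The steps above are essentially bookkeeping; the substantive obstacle—and the reason for the dichotomy between $m=2$ and $m\geq3$—is the failure of the numbers $\mu_{\pi}(n)$ to enjoy any orthogonality on perfect $m$-th powers, flagged already in the remark after \cref{prop:MVT2}. It is this phenomenon that forces the double sum over $\nu$ and $\tilde{\nu}$ into the estimate, and when $m=2$ the series $\sum_{\nu}\nu^{-m/2}$ diverges, so each of the two $\nu$-sums contributes a genuine factor $\log q$ rather than $O(1)$. A minor point to be careful about is that, when $\mu_{\pi}(d)$ is extracted from the spectral average in the first step, it must be majorized uniformly in $\pi$—this is precisely what \eqref{eqn:mu_bound} (equivalently \eqref{eqn:LRS_finite}) supplies—so that the residual coefficient sum $\sum_{d\mid q^{\infty}}|\alpha(d)|\tau_{m}(d)d^{\theta_{m}}\ll F(q)$ detaches cleanly from the sieve inequality.
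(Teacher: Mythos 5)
Your proposal is correct and follows essentially the same route as the paper: factor out the $q$-part to extract a factor $F(q)$, feed the coprime sum into \cref{prop:MVT2}, and estimate the resulting $\nu,\tilde{\nu},n$-sums using $|\alpha(n)|\ll n^{-1/2}$, with the $m=2$ versus $m\geq 3$ dichotomy governed by the convergence of $\sum_{\nu}\nu^{-m/2}$. The only cosmetic difference is that you invoke the vanishing of $\mu_{\pi}(p^k)$ for $k\geq m$ at ramified $p$, whereas the paper simply uses the uniform majorant \eqref{eqn:mu_bound} for all $p\mid q$; this is an unnecessary (though harmless) extra observation, and the paper's treatment keeps the $\log(N/\max(\nu^m,\tilde{\nu}^m))$ factor rather than relaxing immediately to $\log N$, which changes nothing at the level of powers of $\log q$.
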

\begin{proof}
We mimic the proof of \cref{cor:MVT1} using the multiplicativity of $\mu_{\pi}(n)$ and $\alpha(n)$ along with \eqref{eqn:mu_bound}. As before we deduce that the left hand side is bounded by
\begin{align*}
&\ll F(q) \sum_{\nu,\tilde{\nu}\leq N^{1/m}} \sum_{ n\leq N/\max(\nu^m, \tilde{\nu}^m)}|\alpha(\nu^m n)\overline{\alpha(\tilde{\nu}^m n)}|\\
& \ll \sum_{\nu,\tilde{\nu}\leq N^{1/m}}\frac{1}{\nu^{m/2}\tilde{\nu}^{m/2}}\log\frac{N}{\max(\nu^{m}, \tilde{\nu}^{m})}\ll\begin{cases}
	 \log N&\mbox{if $m\geq 3$.}\\
	 (\log N)^3&\mbox{if $m=2$,}
	 \end{cases}
\end{align*}
\end{proof}

Finally we record a large sieve inequality for $a_{\pi}(p^k)$ as defined in  \eqref{eqn:defa}. 

\begin{proposition}\label{newcor} Let $q\geq 2$ be an integer and $\alpha\colon \mathbb{N}\to\mathbb{C}$  satisfy $\alpha(n)\ll 1/\sqrt{n}$. If $1\leq x\leq \Cr{Blomer1}q$, then
\begin{align*}
&\frac{1}{V(q)}\sum_{\pi\in\mathcal{F}(q)}\frac{1}{L(1,\pi,\Ad)}\Big|\sum_{ n\leq x }\alpha(n)a_{\pi}(n) \Lambda(n)\Big|^2\ll (\log q)^2. \end{align*}
\end{proposition}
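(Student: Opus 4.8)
The plan is to exploit the fact that $\Lambda(n)$ and $a_{\pi}(n)$ are supported on prime powers, and to split the inner sum according to whether the underlying prime divides $q$, writing $\sum_{n\le x}\alpha(n)a_{\pi}(n)\Lambda(n)=S_1(\pi)+S_2(\pi)$, where $S_1$ runs over $p^k\le x$ with $p\nmid q$ and $S_2$ over $p\mid q$. The ramified piece $S_2$ requires no cancellation: by \eqref{eqn:vonMangoldtBound} and the hypothesis $\alpha(n)\ll n^{-1/2}$, the geometric series over $k$ converges (as $\theta_m<\tfrac12$) and yields $|S_2(\pi)|\ll\sum_{p\mid q}\log p/p^{1/2-\theta_m}\le\sum_{p\mid q}\log p\le\log q$, uniformly in $\pi$; squaring this and invoking \eqref{eqn:average_weight} bounds the corresponding weighted second moment by $\ll(\log q)^2$.

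The main work is the unramified piece $S_1$, which I would feed into \cref{thm:MVTgeneral}. Using \eqref{eqn:schura}, for each $p\nmid q$ and $k\ge 1$ one writes $a_{\pi}(p^k)$ as a signed sum of at most $m$ Fourier coefficients $B_{\pi}(\mathbf{n})$, one for each hook partition $\lambda$ of size $k$ with at most $m$ parts. The essential observation is that under the dictionary recalled after \eqref{eqn:schura}, a partition $\lambda$ with at most $m-1$ parts is the $B_{\pi}(\mathbf{n})$ of a tuple $\mathbf{n}$ with $w(\mathbf{n})=p^{|\lambda|}$, so for every such hook we get $w(\mathbf{n})=p^k\le x$, and for the single hook of full length $m$ (the ``$j=m$'' term) the relation $s_{\lambda}=s_{\lambda+(1,\dots,1)}$ coming from $\prod_j\alpha_{j,\pi}(p)=1$ only shortens $\lambda$, dropping the weight to $p^{k-m}\le p^k\le x\le\Cr{Blomer1}q$. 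After collecting terms we may therefore write $S_1(\pi)=\sum_{w(\mathbf{n})\le x,\,(\mathbf{n},q)=1}\beta(\mathbf{n})B_{\pi}(\mathbf{n})$ for some $\pi$-independent $\beta$, supported on tuples whose weight is a prime power $p^e\le x$ (together with the single tuple $(1,\dots,1)$), with at most $m$ such tuples for each value of the weight and with $|\beta(\mathbf{n})|\ll\log p/p^{e/2}$ when $w(\mathbf{n})=p^e\ge 2$. Summing over prime powers then gives $\sum_{\mathbf{n}}|\beta(\mathbf{n})|^2\ll|\beta(1,\dots,1)|^2+m\sum_{p^e\le x}(\log p)^2/p^e\ll(\log q)^2$, since the $e=1$ contribution is $\asymp(\log x)^2$, the $e\ge 2$ contributions are $O(1)$, and the exceptional coefficient $\beta(1,\dots,1)=\pm\sum_{p^m\le x,\,p\nmid q}\alpha(p^m)\log p$ is $\ll\log q$ (indeed $O(1)$ when $m\ge 3$). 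Applying \cref{thm:MVTgeneral} yields the weighted second moment bound $\ll(\log q)^2$ for $S_1$, and combining the two pieces via $|S_1+S_2|^2\le 2|S_1|^2+2|S_2|^2$ completes the argument.

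I expect the only genuine obstacle to be the weight bookkeeping in the previous paragraph. A priori the shape of the hook terms in \eqref{eqn:schura} — a $p$ sitting at entry $m-j$ alongside $p^{k-j}$ at the last entry — looks as though $w(\mathbf{n})$ could be as large as a fixed power of $x$, which would be fatal since \cref{thm:MVTgeneral} only tolerates $w(\mathbf{n})\le\Cr{Blomer1}q$; the point that rescues this is the clean identity $w(\mathbf{n})=p^{|\lambda|}$, the fact that every hook appearing for $a_{\pi}(p^k)$ has size exactly $k$, and the fact that the determinant-one reduction never increases $|\lambda|$. A minor secondary point is that distinct prime powers $p^k$ feed the same $\mathbf{n}$ only through the degenerate $j=m$ term, so the coefficients $\beta(\mathbf{n})$ never accumulate more than $O(1)$ contributions of comparable size, keeping $\sum_{\mathbf{n}}|\beta(\mathbf{n})|^2$ under control.
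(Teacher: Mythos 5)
Your proof is correct and follows essentially the same route as the paper: split ramified from unramified prime powers, expand $a_{\pi}(p^k)$ via the Murnaghan--Nakayama formula \eqref{eqn:schura}, and invoke \cref{thm:MVTgeneral}. The one organizational difference is that the paper additionally truncates at $k \geq m^2+1$ (handling that tail trivially via \eqref{eqn:vonMangoldtBound} and \eqref{eqn:average_weight}) so that it can apply \cref{thm:MVTgeneral} separately to each of a bounded number of $(j,k)$ pairs; your device of assembling a single coefficient function $\beta$ supported on tuples of prime-power weight $w(\mathbf{n})=p^{|\lambda|}$ and applying \cref{thm:MVTgeneral} once sidesteps that truncation and is, if anything, slightly cleaner.
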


\begin{proof} We split the sum over $n$  into three pieces: 
\begin{itemize}
\item   (i) $n = p^k$ with $p$ prime $k \geq m^2 + 1$, 
\item   (ii) $n = p^k$ with $p \nmid q$ prime and $k < m^2 + 1$,
\item (iii) $n \in \{p^k : k < m^2 + 1, p \mid q\}$
\end{itemize}
and call the corresponding contributions $S_1, S_2, S_3$. 

In case (i), we use the bounds \eqref{eqn:vonMangoldtBound} together with  $\alpha(n)\ll 1/\sqrt{n}$ to see that this portion of the $n$-sum is 
$$\ll \sum_{p \leq x} \sum_{ k \geq  m^2 +1}   p^{-k/(m^2 + 1)} \log p \ll \sum_{p \leq x}\frac{\log p}{p} \ll \log x \ll \log q,$$
so that  \eqref{eqn:average_weight} provides the bound $S_1 \ll (\log q)^2$. 

In case (iii), we use again \eqref{eqn:vonMangoldtBound} to see that the $n$-sum is $\ll \omega(q) \ll \log q$, and we obtain $S_3 \ll  (\log q)^2$ by \eqref{eqn:average_weight}. 

It remains to treat case (ii). By \eqref{eqn:schura}, it suffices to bound
$$S_{2,1}(k) := \frac{1}{V(q)}\sum_{\pi\in\mathcal{F}(q)}\frac{1}{L(1,\pi,\Ad)}\Big|\sum_{\substack{ p^k\leq x\\ p \nmid q} }\alpha_1(p^k)\lambda_{\pi}(p^k)\Big|^2$$
and 
$$S_{2,j}(k) := \frac{1}{V(q)}\sum_{\pi\in\mathcal{F}(q)}\frac{1}{L(1,\pi,\Ad)}\Big|\sum_{ \substack{p^k\leq x \\ p \nmid q}}\alpha_1(p^k)B_{\pi}(1, \ldots, 1, p, 1, \ldots, 1, p^{k-j})\Big|^2 $$
for $2 \leq j \leq \min(m, k)$ and $2 \leq k < m^2 + 1$, where the entry $p$ is at position $m-j$, and $\alpha_1(n)\ll n^{-1/2}\log n$. In either case, we can apply  \cref{thm:MVTgeneral} to obtain
$$S_{2,j}(k) \ll \sum_{p^k \leq x} \frac{(\log p)^2}{p^k}  \ll   (\log x)^2 \ll (\log q)^2$$
for all $1 \leq j \leq \min(m, k)$ and $1 \leq k < m^2 + 1$. 
\end{proof}

We finish the section with the \textbf{proof of \cref{thm:MVTgeneral}.} 
The proof uses the technology established in \cite[Theorem 4]{Blomer_density}, and we assume some familiarity with that paper.  In the expression
$$ \frac{1}{V(q)}\sum_{\pi \in \mathcal{F}(q)} \frac{1}{L(1, \pi, \text{Ad})}\Big|\sum_{\substack{w(\textbf{n} \leq x\\ (\textbf{n}, q) = 1}} \beta(\textbf{n}) B_{\pi}(\textbf{n})\Big|^2,$$
we add by positivity the rest of the non-residual spectrum, getting a full spectral expression of level $q$ that we denote by $\int_{(q)} d\varpi$, as in \cite[Section 5]{Blomer_density}. We detect the condition that the Laplacian eigenvalue is in $I$ by a finite collection of inner products $\langle E_j, W_{\varpi} \rangle$ for fixed compactly supported test functions $E_1, \ldots, E_r : \Bbb{R}_{>0}^{n-1} \rightarrow \Bbb{R}$ and Whittaker functions $W_{\varpi}$ as in \cite[Lemma 6]{Blomer_density} with $Z=1$.  Opening the square and recalling \eqref{eqn:sothat} and \cite[(5.3)]{Blomer_density}, we are left with bounding
\begin{equation}\label{sumj}
\begin{split}
\sum_j\Big| \sum_{\substack{w(\textbf{n}), w(\tilde{\textbf{n}}) \leq x\\ (\textbf{n}, q) = (\tilde{\textbf{n}}, q) = 1}} \beta(\textbf{n}) \beta(\tilde{\textbf{n}}) \mathcal{S}_{E_j}(N, \tilde{N})\Big|,
 \end{split}
\end{equation}
where
\begin{displaymath}
\begin{split}
&\mathcal{S}_E(N, \tilde{N}) :=  \int_{(q)} \overline{A_{\varpi}(N)} A_{\varpi}(\tilde{N}) |\langle W_{\varpi}, E\rangle|^2 d\varpi\\
&N = (n_1, \ldots, n_{m-1}), \quad \tilde{N} = (\tilde{n}_1, \ldots, \tilde{n}_{m-1} ),
\end{split}
\end{displaymath}
and $A_{\varpi}$ are the $L^2$-normalized Fourier coefficients of an automorphic constituent $\varpi$. (Here we used that $A_{\varpi}(n_{m-1}, \ldots, n_1) = \overline{A_{\varpi}(n_1, \ldots, n_{m-1})}$ to slightly simplify the notation.)
The proof of \cref{thm:MVTgeneral} follows now from the following lemma.

\begin{lemma}\label{35} There exists a constant $c_0 = c_0(E) > 0$   such that the following holds: if $w(\textbf{n}), w(\tilde{\textbf{n}}) \leq c_0q$, then  $\mathcal{S}_E(N, N') \ll \delta_{N = N'}.$ 
\end{lemma}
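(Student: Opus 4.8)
The plan is to reduce the claimed bound $\mathcal{S}_E(N,N') \ll \delta_{N=N'}$ to a statement purely about the relative trace formula / Kuznetsov formula on $\GL(m)$ for the level $q$ congruence subgroup $\Gamma_0(q)$. The spectral integral $\int_{(q)} \overline{A_\varpi(N)} A_\varpi(\tilde N)|\langle W_\varpi, E\rangle|^2\, d\varpi$ is, up to the harmless weight $|\langle W_\varpi, E\rangle|^2$ encoding the eigenvalue localization in $\mathcal{I}$, exactly the kind of spectral sum that the Kuznetsov formula on $\Gamma_0(q)$ converts into a geometric side: a diagonal (identity) term proportional to $\delta_{N=N'}$ plus Kloosterman-type sums attached to the Weyl group elements, weighted by Bessel-type integral transforms of the test function built from $E$. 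So the first step is to invoke the $\GL(m)$ Kuznetsov formula in the form used in \cite{Blomer_density} (its Section 5), writing $\mathcal{S}_E(N,\tilde N)$ as a main diagonal term $M \cdot \delta_{N=N'}$ plus an off-diagonal error $\mathcal{E}(N,\tilde N)$.

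Second, I would control the diagonal term: it is $\asymp \delta_{N=N'}$ because the $E_j$ are fixed compactly supported test functions, so $\langle W_\varpi, E\rangle$ and hence the Plancherel-weighted diagonal contribution is $O(1)$ — this is where the normalization $Z=1$ from \cite[Lemma 6]{Blomer_density} and the $L^2$-normalization of the $A_\varpi$ are used. This step is routine given the cited machinery.

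Third — and this is the heart of the matter — I would bound the off-diagonal term $\mathcal{E}(N,\tilde N)$ and show it is $\ll \delta_{N=N'} \cdot (\text{something small})$, in fact that it contributes nothing beyond $O(\delta_{N=N'})$ in the stated range. The Kloosterman terms in the $\GL(m)$ Kuznetsov formula for $\Gamma_0(q)$ involve moduli divisible by $q$ (this is the source of the hypothesis $w(\mathbf n), w(\tilde{\mathbf n}) \le c_0 q$): the arithmetic input entering the hyper-Kloosterman sums is controlled by $w(\mathbf n)/q$ and $w(\tilde{\mathbf n})/q$, and the Bessel transforms of the fixed test functions $E$ decay once the relevant argument, which is of size roughly $\sqrt{w(\mathbf n)w(\tilde{\mathbf n})}/q \le c_0$, is bounded — more precisely, the support of the Bessel transform of a fixed compactly supported function forces the Kloosterman moduli to be bounded, but since every such modulus is $\ge q$, for $c_0$ small enough no nonzero Weyl element contributes at all. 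Thus the entire off-diagonal vanishes and only the diagonal $\delta_{N=N'}$ survives. I expect the main obstacle to be making the dependence of the "effective support" of the archimedean Bessel transform on the fixed functions $E_j$ completely explicit and uniform, so as to produce a single constant $c_0 = c_0(E)$ valid for all the $E_j$ simultaneously; this is exactly the kind of estimate carried out in \cite[Sections 5--6]{Blomer_density}, and I would cite and mildly adapt it rather than redo it, pointing out that the only change from \cite[Theorem 4]{Blomer_density} is that the coefficients $A_\varpi(N)$ now range over general tuples $N$ rather than $N = (1,\ldots,1,n)$, which affects only the bookkeeping of $w(\mathbf n)$ versus $n$ and not the structure of the argument.

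Finally, assembling these pieces: combining the diagonal bound with the vanishing of the off-diagonal gives $\mathcal{S}_E(N,\tilde N) \ll \delta_{N=N'}$ whenever $w(\mathbf n), w(\tilde{\mathbf n}) \le c_0 q$, which is the assertion of \cref{35}; feeding this into \eqref{sumj} and summing trivially over $j$ (a fixed finite set) and over $\mathbf n = \tilde{\mathbf n}$ with $w(\mathbf n) \le x \le \Cr{Blomer1} q$ (taking $\Cr{Blomer1} \le c_0$) yields $\sum_{w(\mathbf n)\le x,\ (\mathbf n,q)=1} |\beta(\mathbf n)|^2$, completing the proof of \cref{thm:MVTgeneral}.
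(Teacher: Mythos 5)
Your plan follows the paper's proof almost step for step: split via the Kuznetsov formula into the identity Weyl element (giving $O(\delta_{N=\tilde{N}})$) plus nontrivial Weyl elements, and kill the latter by combining the support restriction imposed by the compactly supported test function $E$ with the divisibility $q\mid c_1$ of the Kloosterman moduli. So the strategy is sound and matches the paper.

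The one place where you undersell the work is the step you describe as ``mildly adapt[ing]'' \cite[Lemma 1]{Blomer_density} to general tuples $N$, calling it ``bookkeeping of $w(\mathbf n)$ versus $n$.'' In the paper this is the bulk of the proof. Starting from the bound $c_j \ll_E \prod_i B_i^{s(i,j)}$ with $B = N\cdot{^w\tilde N}$, one must compute the coordinates of ${^w\tilde N}$ explicitly for an arbitrary admissible block-permutation Weyl element $w = \operatorname{antidiag}(I_{d_1},\ldots,I_{d_r})$. This requires writing ${^w\tilde N} = (N_r,\ldots,N_1)$ in blocks, tracking a telescoping product, and carrying out a calculation that in the end yields $\prod_i (n'_{m-i})^i = \prod_i \tilde n_i^i \cdot \prod_{i=m-d_1+1}^{m-1}\tilde n_i^{-m}$ and a matching identity for $\prod_i (n'_i)^i$. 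Only after this does one get $c_j \le w(\mathbf n)^{(m-j)/m} w(\tilde{\mathbf n})^{j/m}$. Your heuristic that the relevant argument is ``of size roughly $\sqrt{w(\mathbf n)w(\tilde{\mathbf n})}/q$'' reflects the symmetric exponent $j=m/2$, but the contradiction is actually run at $j=1$ (since $q\mid c_1,\ldots,c_{m-d_1}$ and $d_1<m$ for $w\neq\mathrm{id}$), where the exponents are $(m-1)/m$ and $1/m$; this still gives $c_1 \le w(\mathbf n)^{(m-1)/m}w(\tilde{\mathbf n})^{1/m}\le c_0 q$, so the conclusion is unaffected, but the intermediate bound is not the one you wrote. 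In short: right strategy, correct final architecture, but the claimed ``mild adaptation'' is in fact the new technical content and must actually be carried out.
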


\begin{proof} We   apply \cite[Lemma 7]{Blomer_density} to $\mathcal{S}_E(N, N')$ and translate it into a sum over Kloosterman sums that can be estimated trivially as  
$$\sum_{w \in W} \sum_{v \in V} \sum_{c \in \Bbb{N}^{m-1}} \frac{|S_{q, w}^v(N, \tilde{N}, c)|}{c_1 \cdots c_{m-1} } \frac{1}{{\rm y}(\iota(N \cdot  {^w\!\tilde{N}}) c^{\ast}  )^{\eta}}  \int_{\tilde{T}(\Bbb{R})} \int_{U_w(\Bbb{R})} |E\big({\rm y}(\iota(N \cdot  {^w\!\tilde{N}}) c^{\ast} w x y) \big)E({\rm y}(y))| \, dx\, d^{\ast} y. $$
The Kloosterman sums $S_{q, w}^v(N, \tilde{N}, c)$ are defined in \cite[(4.2)]{Blomer_density}; all notation is explained in \cite[Section 2]{Blomer_density}. Specifically,  $W$ is the Weyl group, $V$ is a finite set of $2^m$ elements, the maps $\iota : \Bbb{R}_{>0}^{m-1} \rightarrow \GL_m(\Bbb{R})$ and ${\rm y} : \GL_m(\Bbb{R}) \rightarrow \Bbb{R}_{>0}^{m-1}$ are defined in \cite[(2.4), (2.5)]{Blomer_density},  the vector $^wy \in \Bbb{R}^{m-1}$  for   $y \in \Bbb{R}^{m-1}$ and $w \in W$ is given in \cite[(2.6)]{Blomer_density}, for    $c \in \Bbb{N}^{m-1}$ the diagonal matrix $c^{\ast}$ is defined in the display before \cite[(2.8)]{Blomer_density}, $\tilde{T}(\Bbb{R})$ is the set of $m$-dimensional diagonal matrices with positive entries and 1 in the bottom right corner, and $U_w = w^{-1} U^{\top} w \cap U$ where $U$ is the set of   unipotent upper triangular matrices. 

The trivial Weyl element $w = \text{id}$ contributes
\begin{equation*}
O(\delta_{N = \tilde{N}}),
\end{equation*}
since the Kloosterman sum vanishes unless $c_1 = \ldots = c_{m-1} = 1$ and $N = \tilde{N}$, so that ${\rm y}(\iota(N \cdot  {^w\!\tilde{N}}) c^{\ast} ) = 1$, and in addition $U_{\text{id}}(\Bbb{R}) = \{1\}$.

We want to show that the contribution of all other Weyl elements vanishes. To this end, we argue as in \cite[(7.2)]{Blomer_density} and determine upper bounds for the moduli $c_1, \ldots, c_{m-1}$ that ensure that the integral over $\tilde{T}(\Bbb{R}) \times U_w(\Bbb{R})$  does not vanish. These upper bounds will contradict the divisibility conditions of the moduli that we recall below. 

The integral over $\tilde{T}(\Bbb{R}) \times U_w(\Bbb{R})$ can only be non-zero if
$${\rm y}(\iota(N \cdot  {^w\!\tilde{N}}) c^{\ast} w x y) \in \text{supp}(E)$$
for some $x \in U_w(\Bbb{R})$ and some $y$ such that ${\rm y}(y) \in \text{supp}(E)$.  We now apply \cite[Lemma 1]{Blomer_density}  with $B = N  \cdot  {^w\!\tilde{N}} \in \Bbb{Q}_{>0}^{m-1}$ to obtain the bound
\begin{equation}\label{boundc}
c_j \ll_E \prod_{i=1}^{m-1} B_i^{s(i, j)}, \quad s(i, j) = \frac{1}{m} \begin{cases} i(n-j), & i \leq j,\\ j(n-i), & i > j.\end{cases} 
\end{equation}
To proceed further, we need to explicate the coordinates of $B$, and to this end we need to write down explicitly the coordinates of $${^w\!\tilde{N}} = \Big( \frac{\tilde{n}_1 \cdots \tilde{n}_{m - w(m- j + 1)}}{\tilde{n}_1 \cdots \tilde{n}_{m - w(m-j)}} \Big)_{1 \leq j \leq m-1} =: N' = (n_1', \ldots, n_{m-1}'),$$ say, as defined in \cite[(2.6)]{Blomer_density} where the permutation matrix $w = (w_{ij}) \in W$ is identified with the permutation $i \mapsto j$ for $w_{ij} = 1$.   Let $$w = \left(\begin{matrix} &&& I_{d_1} \\ && I_{d_2} & \\ & \ddots && \\ I_{d_r} &&&\end{matrix}\right)$$ be an admissible Weyl element.  As a permutation, this element sends $1 \mapsto m-d_1 + 1$, $2 \mapsto m-d_1 + 2, \ldots, d_1 + 1 \mapsto m - d_1 - d_2 + 1, \ldots, d_1 + \cdots + d_{r-1} + 1 \mapsto 1, \ldots, m \mapsto d_r$. 
 We write
$$N' = (N_{r},  \ldots,  N_{1})$$
where $N_i$ is a vector of length $d_i$ and for $i \leq r-1$ and $N_r$ has length $d_r - 1$. For $i \geq 0 $ write $D_i = d_0 + d_1 + \ldots + d_i$ with $d_0 = 0$. We have
$$N_{i} = \Big(\tilde{n}_{D_{i-1} + 1} \cdots \tilde{n}_{D_{i+1} - 1}, \frac{1}{\tilde{n}_{D_{i-1} + 1}}, \cdots, \frac{1}{\tilde{n}_{D_i - 1}}\Big), \quad i \leq r-1$$
  and
$$N_r = \Big(  \frac{1}{\tilde{n}_{D_{r-1} + 1}}, \cdots, \frac{1}{\tilde{n}_{D_r - 1}}\Big)$$
(which is the empty vector if $d_ r = 1$). We compute
\begin{displaymath}
\begin{split}
 \prod_{i = 1}^{m-1} (n'_{m-i})^i  = & \frac{1}{\tilde{n}_{d_1 - 1}} \cdots \frac{1}{\tilde{n}_1^{d_1 - 1}} (\tilde{n}_1 \cdots \tilde{n}_{d_1 + d_2 - 1})^{d_1} \\
 &\times \frac{1}{\tilde{n}_{d_1 + d_2 - 1}^{d_1 + 1} } \cdots \frac{1}{\tilde{n}_{d_1 + 1}^{d_1 + d_2 - 1}} (\tilde{n}_{d_1+ 1} \cdots \tilde{n}_{d_1 + d_2 + d_3 - 1})^{d_1 + d_2}\\
 & \times \cdots \times  \frac{1}{\tilde{n}^{d_1 + \ldots + d_{r-1} + 1}_{d_1 + \cdots + d_r - 1}} \cdots \frac{1}{\tilde{n}^{d_1 + \ldots + d_r - 1}_{d_1 + \cdots + d_{r-1} + 1}}\\
  = & \prod_{i= 1}^{m-1} (\tilde{n}_i)^i \cdot \prod_{i = m - d_1+1}^{m - 1} (\tilde{n}_i)^{-m}
\end{split}
\end{displaymath}
and similarly 
$$\prod_{i=1}^{m-1} (n'_{i})^{i} = \prod_{i=1}^{m-1}  (\tilde{n}_{m-i}')^i  \prod_{i = 1}^{d_1 - 1} \tilde{n}_i^{-m}.$$
With this information, we return to \eqref{boundc} and obtain
\begin{displaymath}
\begin{split}
c_j &\ll \prod_{i=1}^{m-1} (n_i n'_i)^{s(i, j)} \leq \prod_{i=1}^{m-1} n_i^{i(m-j)/m} \prod_{i=1}^{m-1} (n'_i)^{j(m-i)/m}\\
&  = \prod_{i=1}^{m-1} n_i^{i(m-j)/m} \prod_{i=1}^{m-1} \tilde{n}_i^{ji/n} \prod_{i = m - d_1+1}^{m- 1} \tilde{n}_i^{-j}  \leq  w(\textbf{n})^{\frac{m-j}{m}} w(\tilde{\textbf{n}})^{\frac{j}{m}} \leq c_0q. 
\end{split}
\end{displaymath}
From \cite[(4.3)]{Blomer_density} we know that
$$q \mid c_1, \ldots, c_{n-d_1}.$$
If $w \not = \text{id}$, then $d_1 < n$, so $q \mid c_1 \ll c_0q$.  This is a contradiction if $x \leq c_0 q$ for a sufficiently small constant $c_0$.   We choose $\Cr{Blomer1}$ in \cref{thm:MVTgeneral} to equal $\min_j c_0(E_j)$ for the finite collection of test functions $E_j$ appearing in \eqref{sumj}.
\end{proof}

\section{Proof of \texorpdfstring{\cref{thm:ZDE,thm:ZDE2}}{Theorems \ref*{thm:ZDE} and \ref*{thm:ZDE2}}}
\label{sec:ZDE}

\subsection{Proof of \cref{thm:ZDE}}
Let $q$ be a sufficiently large integer and $0<\delta<\frac{1}{2}$. Let $A$ be a sufficiently large constant (in terms of $\delta$). For instance, we can choose $A = (3-\delta)/(2\delta)$.  We regard $\delta$ and $A$ as fixed, and all implied constants may depend on these numbers.  Let $\epsilon = 1/(2m^2)$ and
\begin{equation}
\label{eqn:Xchoice}
X=\Cr{Blomer1}q,\qquad Y=q^{1-\delta},\qquad 1\leq T\leq q^{ (1-\epsilon)/m}.
\end{equation}
 If $T> q^{(1-\epsilon)/m}$, then \cref{thm:ZDE} is trivial compared to \eqref{eqn:RvonM} when $\sigma < 1-40\epsilon/143$ or \eqref{eqn:HT_GLn} when $\sigma\geq 1-40\epsilon/143$.  If $T\leq q^{(1-\epsilon)/m}$ and $\sigma\leq 1/2+1/\log q$, then \cref{thm:ZDE} is trivial compared to what follows from \eqref{eqn:RvonM}.  Therefore, in what follows, we may assume that $\sigma> 1/2+1/\log q$ and $T\leq q^{(1-\epsilon)/m}$.

Given $\pi\in\mathcal{F}(q)$, let
\begin{equation*}
M_X(s,\pi)= \sum_{n\leq X}\frac{\mu_{\pi}(n)}{n^s},\qquad 
LM_X(s,\pi)= L(s,\pi) M_X(s,\pi)
\end{equation*}
and recall the (polynomial) bound \eqref{eqn:mu_bound} for $\mu_{\pi}(n)$.  
By \cite[Proposition 5.7]{IK}, if $t\in[-T,T]$, then
\[
|\{\rho=\beta+i\gamma\colon L(\rho,\pi)=0,~\beta\geq 0,~|\gamma-t|\leq 1\}|\ll \log C(\pi,t)\ll  \log q.
\]
The box $[\sigma,1]\times[-T,T]$ is covered by $O(T)$ boxes of the form $[\sigma,1]\times[y-10\log Y,y+10\log Y]$, each containing $O((\log q)^2)$ zeros.  If we write $n_{\pi}=n_{\pi}(\sigma,T)$ for the number of such smaller boxes containing at least one zero $\rho$ of $L(s,\pi)$, then
\begin{equation}
\label{eqn:little_n}
N_{\pi}(\sigma,T)\ll (\log q)^2 n_{\pi}.
\end{equation}

Let $\rho=\beta+i\gamma$ be a zero of $L(s,\pi)$ with $ 1/2+1/\log q<\sigma\leq \beta$ and $|\gamma|\leq T$, in which case $LM_X(\rho,\pi)=0$.   Let $\int_{(c)}$ denote a contour integral along the line $\re(s)=c$.  The existence of such a $\rho$ implies that
\begin{equation}
\begin{aligned}
\label{eqn:zero_detect_1}
e^{-1/Y}&=\frac{1}{2\pi i}\int_{(1-\beta+A)}(1-LM_X(\rho+w,\pi))\Gamma(w)Y^{w}dw\\
&+\frac{1}{2\pi i}\int_{(\frac{1}{2}-\beta)}LM_X(\rho+w,\pi)\Gamma(w)Y^{w}dw.
\end{aligned}
\end{equation}
If $\re(w) = 1-\beta+A$ with $A$ sufficiently large, then
\begin{align*}
	|1-LM_X(\rho+w,\pi)|&= |L(\rho+w,\pi)|\cdot|L(\rho+w,\pi)^{-1}-M_X(\rho+w,\pi)|\\
	&= |L(1+A+i\im(\rho+w),\pi)|\cdot\Big|\sum_{n>X}\frac{\mu_{\pi}(n)}{n^{1+A+i\im(\rho+w)}}\Big| \ll X^{1-A},
\end{align*}
so that by Stirling's formula we obtain
\begin{equation*}
\frac{1}{2\pi i}\int_{(1-\beta+A)}(1-LM_X(\rho+w,\pi))\Gamma(w)Y^{w}dw\ll \frac{Y^{1-\sigma+A}}{X^{A-1}} \ll \frac{1}{Y}
\end{equation*}
for $A$ sufficiently large by \eqref{eqn:Xchoice}. 

We now turn towards the second term on the right hand side of \eqref{eqn:zero_detect_1}. Again, by Stirling's formula, trivial bounds for $M_X(1/2 + it, \pi)$, and the convexity bound \eqref{eqn:convexity} for  $L(1/2 + it, \pi)$ (recall that $T \leq q^{ (1-\epsilon)/q}$), we have
\begin{equation*}
\begin{aligned}
&\frac{1}{2\pi i}\int_{\substack{\re(w)=\frac{1}{2}-\beta \\ |\im(w)|\geq 10\log Y}}LM_X(\rho+w,\pi)\Gamma(w)Y^{w}dw \ll \frac{1}{Y}.
\end{aligned}
\end{equation*}
Combining the previous two bounds with 
 the asymptotic $e^{-1/Y}=1+O(1/Y)$ and  $\beta\geq\sigma>\frac{1}{2}+\frac{1}{\log q}$, we arrive at
\begin{equation}
\begin{aligned}
\label{eqn:zero_detect_2}
&\ll \int_{\frac{1}{2}-\beta-10i\log Y}^{\frac{1}{2}-\beta+10i\log Y}LM_X(\rho+w,\pi)\Gamma(w)Y^{w}dw\\
&\ll Y^{\frac{1}{2}-\sigma}\Big((\log q)\int_{|\gamma-v|\leq 1}|LM_X(\tfrac{1}{2}+iv,\pi)|dv+\int_{1\leq |\gamma-v|\leq 10\log Y}|LM_X(\tfrac{1}{2}+iv,\pi)|dv\Big).
\end{aligned}
\end{equation}
The factor $\log q$ in the second line is included as an upper bound $\Gamma(w)$ at $w = \frac{1}{\log q} + iv$ for small $v$.

We detect zeros in the rectangles $[\sigma,1]\times[y-10\log Y,y+10\log Y]$ using \eqref{eqn:zero_detect_2}, thus obtaining
\begin{equation}
\label{eqn:zero_detect_4}
n_{\pi}\ll Y^{\frac{1}{2}-\sigma}\Big(T(\log q)\sup_{|t_0|\leq T}\int_{|t_0-v|\leq 1}|LM_X(\tfrac{1}{2}+iv,\pi)|dv+\int_{-T-10\log Y}^{T+10\log Y}|LM_X(\tfrac{1}{2}+iv,\pi)|dv\Big).
\end{equation}
Using \eqref{eqn:little_n} and \eqref{eqn:zero_detect_4}, we sum over $\pi\in\mathcal{F}(q)$ and insert the weights $L(1,\pi,\Ad)^{-1}$:
\begin{equation}
\label{eqn:bound_int}
\begin{aligned}
\sum_{\pi\in\mathcal{F}(q)}\frac{N_{\pi}(\sigma,T)}{L(1,\pi,\Ad)}&\ll (\log q)^2\sum_{\pi\in\mathcal{F}(q)}\frac{n_{\pi}}{L(1,\pi,\Ad)}\\
&\ll (\log q)^2 Y^{\frac{1}{2}-\sigma}\int_{-T-10\log Y}^{T+10\log Y}\sum_{\pi\in\mathcal{F}(q)}\frac{|LM_X(\tfrac{1}{2}+iv,\pi)|}{L(1,\pi,\Ad)}dv\\
&+T(\log q)^3
Y^{\frac{1}{2}-\sigma}\sup_{|t_0|\leq T}\int_{|t_0-v|\leq 1}\sum_{\pi\in\mathcal{F}(q)}\frac{|LM_X(\tfrac{1}{2}+iv,\pi)|}{L(1,\pi,\Ad)}dv.
\end{aligned}
\end{equation}
The Cauchy--Schwarz inequality and our choice of $Y$ imply that \eqref{eqn:bound_int} is
\[
\ll T(\log q)^3 Y^{\frac{1}{2}-\sigma}\sup_{|t|\leq T+10\log q}\Big(\sum_{\pi\in\mathcal{F} (q)}\frac{|L(\tfrac{1}{2}+it,\pi)|^2}{L(1,\pi,\Ad)}\Big)^{\frac{1}{2}}  \Big(\sum_{\pi\in\mathcal{F} (q)}\frac{|M_X(\tfrac{1}{2}+it,\pi)|^2}{L(1,\pi,\Ad)}\Big)^{\frac{1}{2}}.
\]
\cref{cor:2ndmoment,cor:MVT2} yield now
\[
\frac{1}{V(q)} \sum_{\pi\in\mathcal{F}(q)}\frac{N_{\pi}(\sigma,T)}{L(1,\pi,\Ad)}\ll 
 \begin{cases}
 TY^{\frac{1}{2}-\sigma}F(q)(\log q)^4&\mbox{if $m\geq 3$,}\\
 TY^{\frac{1}{2}-\sigma}F(q)(\log q)^5&\mbox{if $m=2$.}
 \end{cases}
\]
In view of \eqref{eqn:Xchoice} concludes the proof.

\subsection{Proof of \cref{thm:ZDE2}}

The proof of \cref{thm:ZDE2} is almost identical, so we can be brief. In \eqref{eqn:Xchoice} we make the same choices for $X$,  $Y$ and $T$ with $Q$ in place of $q$. Let $\epsilon=1/(2m^2)$.  If $T>Q^{(1-\epsilon)/m}$ or $\sigma\leq 1/2 + 1/\log Q$, then \cref{thm:ZDE2} is trivial compared to \eqref{eqn:HT_GLn} or \eqref{eqn:RvonM}.  Otherwise, {\it mutatis mutandis}, we obtain
\begin{displaymath}
\begin{split}
&\sum_{\pi\in\mathcal{G}(Q)}\frac{N_{\pi}(\sigma,T)}{L(1,\pi,\Ad)} \\
&\ll T(\log Q)^3 Y^{\frac{1}{2}-\sigma}\sup_{|t|\leq T+O(\log Q)}\Big(\sum_{\pi\in\mathcal{G} (Q)}\frac{|L(\tfrac{1}{2}+it,\pi)|^2}{L(1,\pi,\Ad)}\Big)^{\frac{1}{2}}  \Big(\sum_{\pi\in\mathcal{G} (Q)}\frac{|M_X(\tfrac{1}{2}+it,\pi)|^2}{L(1,\pi,\Ad)}\Big)^{\frac{1}{2}}.
\end{split}
\end{displaymath} 
The analogue of the large sieve inequality \cref{thm:MVTgeneral} is now played by  \cite[Theorem 5]{Jana_newvectors} which  under the current assumptions provides the bounds
$$ \sum_{\pi\in\mathcal{G} (Q)}\frac{|L(\tfrac{1}{2}+it,\pi)|^2}{L(1,\pi,\Ad)} \ll |\mathcal{G}(Q)| \log Q, \qquad   \sum_{\pi\in\mathcal{G} (Q)}\frac{|M_X(\tfrac{1}{2}+it,\pi)|^2}{L(1,\pi,\Ad)} \ll  |\mathcal{G}(Q)| \log Q,
$$
analogously to the proofs of \cref{cor:2ndmoment,cor:MVT2}.

\section{Proof of \cref{thm:order_of_vanishing}}

Fix a function $\psi\colon\R\to[0,\infty)$ that is infinitely differentiable, even, nonnegative, and compactly supported in $[-1,1]$.  Normalize $\psi$ so that $\psi(0)=1$.  Consider the Laplace transform
\[
\Psi(z) = \int_{-\infty}^{\infty}\psi(t)e^{-tz}dt.
\]
Since $\psi$ is even, we have that $\Psi(z)=\Psi(-z)$.  We may choose $\psi$ so that
\begin{equation}
\label{eqn:Psi_lower}
\re(\Psi(z))\geq 0,\qquad |\re(z)|\leq 1.
\end{equation}
Such a $\psi$ satisfying \eqref{eqn:Psi_lower} was constructed for the purpose of obtaining lower bounds for discriminants of number fields in \cite{Poitou}.  For any integer $k\geq 1$, repeated integration by parts yields the bound
\begin{equation}
\label{eqn:Psi_upper}
|\Psi(z)|\ll_{\Psi,k}e^{|\re(z)|}(1+|z|)^{-k}.
\end{equation}

Let $\pi\in\mathcal{F}(q)$ and $x\geq 3$.  It follows from the argument principle, \eqref{eqn:FE}, and the identity $\Psi(z)=\Psi(-z)$ that
\begin{align*}
&\sum_{\rho}\Psi((\rho-\tfrac{1}{2})\log x)\\
&=\frac{1}{2\pi i}\int_{(\frac{1}{2}+\frac{1}{\log x})}\frac{\Lambda'}{\Lambda}(\tfrac{1}{2}+s,\pi)\Psi(s\log x)ds-\frac{1}{2\pi i}\int_{(-\frac{1}{2}-\frac{1}{\log x})}\frac{\Lambda'}{\Lambda}(\tfrac{1}{2}+s,\pi)\Psi(s\log x)ds\\
&=\frac{1}{2\pi i}\int_{(\frac{1}{2}+\frac{1}{\log x})}\Big(\frac{\Lambda'}{\Lambda}(\tfrac{1}{2}+s,\pi)+\frac{\Lambda'}{\Lambda}(\tfrac{1}{2}+s,\tilde{\pi})\Big)\Psi(s\log x)ds.
\end{align*}
A standard calculation using Laplace inversion, \eqref{eqn:completed_L-function}, \eqref{eqn:Psi_upper}, the identity $\psi(0)=1$, and  Stirling's formula shows that the preceding display equals
\begin{align*}
&\frac{1}{\log x}\Big(\log \frac{q}{\pi^m} - 2\re\sum_{n=1}^{\infty}\frac{a_{\pi}(n)\Lambda(n)}{\sqrt{n}}\psi\Big(\frac{\log p^k}{\log x}\Big)\Big)\\
&+\frac{1}{\log x}\sum_{j=1}^m\int_{-\infty}^{\infty}\Big(\frac{\Gamma'}{\Gamma}\Big(\frac{\frac{1}{2}+it+\kappa_{\pi}(j)}{2}\Big)+\frac{\Gamma'}{\Gamma}\Big(\frac{\frac{1}{2}+it+\overline{\kappa_{\pi}(j)}}{2}\Big)\Big)\Psi(it\log x)dt\\
&=\frac{\log (q\pi^{-m})}{\log x} - \frac{2}{\log x} \re\sum_{n=1}^{\infty}\frac{a_{\pi}(n)\Lambda(n)}{\sqrt{n}}\psi\Big(\frac{\log n}{\log x}\Big) +O\Big(\frac{1+ \log(C(\pi)/q)}{\log x}\Big).
\end{align*}
We now isolate the contribution from the zeros on the left-hand side that are close to $s=\frac{1}{2}$, take real parts and apply  \eqref{eqn:Psi_lower}, thus obtaining
\begin{align*}
\mathop{\mathrm{ord}}_{s=1/2}L(s,\pi)&\ll 
\sum_{\substack{\rho = \beta+i\gamma \\ |\beta-\frac{1}{2}|<\frac{1}{\log x}}}\re(\Psi((\rho-\tfrac{1}{2})\log x)) \\
&\ll \frac{\log q}{\log x}+\sum_{\substack{\rho = \beta+i\gamma \\ |\beta-\frac{1}{2}|\geq\frac{1}{\log x}}}|\Psi((\rho-\tfrac{1}{2})\log x)| + \frac{2}{\log x}\Big|\sum_{n=1}^{\infty}\frac{a_{\pi}(n)\Lambda(n)}{\sqrt{n}}\psi\Big(\frac{\log n}{\log x}\Big) \Big|.
\end{align*}
\begin{proof}[Proof of \cref{thm:order_of_vanishing}, weighted version]
Using \cref{newcor} and \eqref{eqn:average_weight} along with the Cauchy-Schwarz inequality, we obtain
\[
\frac{1}{V(q)}\sum_{\pi\in\mathcal{F}(q)}\frac{\mathop{\mathrm{ord}}_{s=1/2}L(s,\pi) }{L(1,\pi,\Ad)}\ll \frac{\log q }{\log x}+\frac{1}{V(q)}\sum_{\pi\in\mathcal{F}(q)}\frac{1}{L(1,\pi,\Ad)}\sum_{\substack{\rho = \beta+i\gamma \\ |\beta-\frac{1}{2}|\geq\frac{1}{\log x}}}|\Psi((\rho-\tfrac{1}{2})\log x)|.
\]
By the symmetry of the zeros as a consequence of the functional equation (note that $\pi\in\mathcal{F}(q)$ implies $\tilde{\pi}\in\mathcal{F}(q)$),   it is enough to consider the zeros $\beta+i\gamma$ with $\beta\geq \frac{1}{2}+\frac{1}{\log x}$ and $\gamma\geq 0$.  We subdivide the region $[\frac{1}{\log x},\frac{1}{2}]\times[0,\infty)$ into small squares of the form
\[
\Big[\frac{a}{\log x},\frac{a+1}{\log x}\Big]\times\Big[\frac{b}{\log x},\frac{b+1}{\log x}\Big],\qquad a\in[1,\lceil\log x\rceil]\cap\Z,\qquad b\in\mathbb{N}.
\]
Using \cref{thm:ZDE} and \eqref{eqn:Psi_upper} with $k=m+4$, we obtain the estimate (cf. \cite[Section 4]{KM_J0})
\begin{align*}
&\frac{1}{V(q)}\sum_{\pi\in\mathcal{F}(q)}\frac{1}{L(1,\pi,\Ad)}\sum_{\substack{\rho = \beta+i\gamma \\ |\beta-\frac{1}{2}|\geq\frac{1}{\log x}}}|\Psi((\rho-\tfrac{1}{2})\log x))|\\
&\ll \sum_{1\leq a\leq \log x}\,\sum_{b\geq 0}\Big(1+\frac{b+1}{\log x}\Big)^A q^{-B a/\log x}F(q)(\log q)^5 e^{a+1}(b+1)^{-k}\ll \frac{F(q)(\log q)^5}{q^{B/\log x}}.
\end{align*}
The weighted average in \cref{thm:order_of_vanishing} follows once we choose
\[
x=\exp\Big(B\frac{\log q}{\log(F(q)(\log q)^5)}\Big).\qedhere
\]
\end{proof}

\begin{proof}[Proof of \cref{thm:order_of_vanishing}, unweighted version]
Alternatively, we can bound the sum over prime powers trivially using  \eqref{eqn:vonMangoldtBound} and get the unweighted average
 \begin{align*}
\frac{1}{V(q)}\sum_{\pi\in\mathcal{F}(q)}\mathop{\mathrm{ord}}_{s=1/2}L(s,\pi)\ll \frac{x^{\frac{1}{2}+\theta_m }+\log q }{\log x}+\frac{1}{V(q)}\sum_{\pi\in\mathcal{F}(q)}\sum_{\substack{\rho = \beta+i\gamma  \\ |\beta-\frac{1}{2}|\geq\frac{1}{\log x}}}|\Psi((\rho-\tfrac{1}{2})\log x)|.
\end{align*}
We use \cref{cor:ZDE} instead of \cref{thm:ZDE} to estimate the sum over zeros $\rho=\beta+i\gamma$ satisfying $|\beta-\frac{1}{2}|\geq1/\log x$, and we bound $R_{\mathcal{F}}(q)F(q)$ using \eqref{eqn:Li_Ad} and \eqref{eqn:F(q)_bound}.  Finally, we choose $\Cl[abcon]{xrange}>0$ to be a sufficiently small constant and $x = (\log q)^{\Cr{xrange}}$.
\end{proof}

\section{Proofs of \cref{thm:zeros_equidistr,thm:zeros_equidistr2}}

\subsection{Proof of \cref{thm:zeros_equidistr}}
Throughout this subsection, we let $q$ be a large prime.  With $B$ as in \cref{thm:ZDE}, let
\begin{equation}
\label{eqn:conditions}
r>2,\qquad T=(\log q)^r,\qquad 1<x\leq \exp\Big(\frac{B\log T}{3\log\log T}\Big),\qquad \delta = \frac{3\log \log T}{2B\log T}.
\end{equation}
Define $\mathop{\mathrm{sinc}}(0)=1$ and $\mathop{\mathrm{sinc}}(t)=(\sin t)/t$ otherwise.  Our proof requires two results.

\begin{lemma}
\label{lem:GonekLandau}
	Let $\pi\in\mathcal{F}(q)$.  If $\langle x\rangle$ is the integer closest to $x$ and $\epsilon>0$, then
\begin{displaymath}
\begin{split}
\frac{1}{V(q)}\sum_{\pi\in\mathcal{F}(q)}\frac{1}{L(1,\pi,\Ad)}&\frac{1}{N_{\pi}(T)}\sum_{|\gamma|\leq T}x^{\rho-\frac{1}{2}} \\
&= -\frac{T}{\pi\sqrt{x}}\mathop{\mathrm{sinc}}\Big(T\log\frac{x}{\langle x\rangle}\Big)\frac{\Lambda(\langle x\rangle)}{V(q)}\sum_{\pi\in\mathcal{F}(q)}\frac{a_{\pi}(\langle x\rangle)}{L(1,\pi,\Ad)}\frac{1}{N_{\pi}(T)}\\
&+O_{\epsilon}\Big(\frac{x^{\frac{1}{2}+\theta_m}\log(2x)}{T\log(qT)}+\frac{x^{\frac{1}{2}+\epsilon}}{T}+\frac{1}{T\sqrt{x}\log x}\Big).
\end{split}
\end{displaymath}
\end{lemma}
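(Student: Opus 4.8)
This is a version of Landau's formula for sums of the shape $\sum_{|\gamma|\le T}x^{\rho}$, in the uniform form due to Gonek, adapted to $L(s,\pi)$ and then averaged over the family. Since the error terms on the right do not depend on $\pi$, it suffices to establish the pointwise identity
\begin{equation*}
\sum_{|\gamma|\le T}x^{\rho-\frac12}=-\frac{T}{\pi\sqrt x}\,\mathop{\mathrm{sinc}}\!\Big(T\log\tfrac{x}{\langle x\rangle}\Big)\Lambda(\langle x\rangle)\,a_\pi(\langle x\rangle)+O_\epsilon\big(x^{\frac12+\theta_m}\log(2x)+x^{\frac12+\epsilon}\log(qT)+x^{-\frac12}\tfrac{\log(qT)}{\log x}\big)
\end{equation*}
for every $\pi\in\mathcal{F}(q)$, divide it by $N_\pi(T)$, and sum against the weights $L(1,\pi,\Ad)^{-1}/V(q)$. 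Here one uses that $N_\pi(T)\asymp T\log(qT)$ uniformly on $\mathcal{F}(q)$ -- this follows from \eqref{eqn:RvonM} together with the boundedness of $\kappa_\pi(1),\dots,\kappa_\pi(m)$ on $\mathcal{F}(q)$ (the Laplace eigenvalue lies in the fixed interval $\mathcal{I}$, so $C(\pi)\asymp q$) -- and that $\frac1{V(q)}\sum_{\pi\in\mathcal{F}(q)}L(1,\pi,\Ad)^{-1}\ll1$ by \eqref{eqn:average_weight}, after which the three displayed errors collapse to the three in the lemma.

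For the pointwise identity I would apply the argument principle to $\frac{L'}{L}(s,\pi)x^{s-\frac12}$ over the rectangle with vertices $(1-\sigma_0)\pm iT'$ and $\sigma_0\pm iT'$, where $\sigma_0=1+\theta_m+\frac1{\log x}$ (capped at a constant when $x$ is close to $1$), and where $T'\in[T,T+1]$ is chosen, by the pigeonhole principle applied to the $O(\log(qT))$ zeros of $L(s,\pi)L(s,\widetilde\pi)$ with ordinate in $[T,T+1]$, so that $|\gamma-T'|\gg1/\log(qT)$ for every such zero. Since $L(s,\pi)$ is entire and nonvanishing for $\re s>1$, the contour integral equals $\sum_{|\gamma|\le T'}x^{\rho-\frac12}$ minus the contribution of the $O(1)$ trivial zeros $-\kappa_\pi(j)$ that may lie inside (each $O(x^{-1/2})$), and passing from $T'$ back to $T$ costs $O(x^{1/2}\log(qT))$. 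On the right edge one inserts the absolutely convergent expansion $\frac{L'}{L}(s,\pi)=-\sum_n\Lambda(n)a_\pi(n)n^{-s}$ (legitimate since $\sigma_0>1+\theta_m$) and integrates term by term; the elementary evaluation $\frac1{2\pi i}\int_{\sigma_0-iT'}^{\sigma_0+iT'}(x/n)^s\,ds=\frac{T'}{\pi}(x/n)^{\sigma_0}\mathop{\mathrm{sinc}}(T'\log(x/n))$ turns this edge into $-\frac{T'}{\pi\sqrt x}\sum_n\Lambda(n)a_\pi(n)(x/n)^{\sigma_0}\mathop{\mathrm{sinc}}(T'\log(x/n))$. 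The single term $n=\langle x\rangle$ gives the main term after the routine approximations $(x/\langle x\rangle)^{\sigma_0}=1+O(1/x)$ and $\mathop{\mathrm{sinc}}(T'\log\tfrac x{\langle x\rangle})=\mathop{\mathrm{sinc}}(T\log\tfrac x{\langle x\rangle})+O(1/T)$; in every other term the prefactor $T'$ cancels the $T'$ inside $\mathop{\mathrm{sinc}}$ (so $T'\mathop{\mathrm{sinc}}(T'\log(x/n))=\sin(T'\log(x/n))/\log(x/n)$ has size $\ll1/|\log(x/n)|$), and combining $|a_\pi(n)|\le m\,n^{\theta_m}$, $\Lambda(n)\le\log(2x)$, prime-power sparsity near $x$, and the elementary bounds $\sum_n\Lambda(n)n^{-1-\frac1{\log x}}\ll\log x$ and $\sum_{n<x/2}\Lambda(n)n^{-1-\frac1{\log x}}(\log(x/n))^{-1}\ll1$ shows this block is $O(x^{\frac12+\theta_m}\log(2x))$. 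On the left edge $\re s=1-\sigma_0<0$ one uses \eqref{eqn:FE} to write $\frac{L'}{L}(s,\pi)=-\log q-\frac{L'}{L}(s,\pi_\infty)-\frac{L'}{L}(1-s,\widetilde\pi_\infty)-\frac{L'}{L}(1-s,\widetilde\pi)$; since $|x^{s-\frac12}|\asymp x^{-1/2}$ there, one integration by parts in $t$ exploiting the oscillation of $x^{it}$ makes the $-\log q$ and archimedean pieces $O(x^{-1/2}\log(qT)/\log x)$, and expanding $-\frac{L'}{L}(1-s,\widetilde\pi)$ yields $\mathop{\mathrm{sinc}}(T'\log(xn))\ll1/(T\log x)$, hence $O(x^{-1/2}/\log x)$. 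On the horizontal edges $\im s=\pm T'$ the choice of $T'$ gives a standard Hadamard-factorisation bound for $\frac{L'}{L}(s,\pi)$ throughout $1-\sigma_0\le\re s\le\sigma_0$ (using \eqref{eqn:FE} for $\re s<\frac12$), and one checks that after division by $N_\pi(T)$ this edge contributes no more than the second displayed error. Collecting the four sides, dividing by $N_\pi(T)\asymp T\log(qT)$, and averaging over $\pi$ via \eqref{eqn:average_weight} gives the lemma, the neighbour block producing the first error term, the left edge the third, and the horizontal edges together with the $T'\mapsto T$ adjustment the second.

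The main obstacle is the error bookkeeping with exactly the claimed exponents: in particular keeping the horizontal-edge contribution acceptable after division by $N_\pi(T)$ (this is what forces the careful choice of $T'$ and a sharp bound for $\frac{L'}{L}$ on it) and disposing of the degenerate ranges -- $x$ close to $1$, $\langle x\rangle$ not a prime power, and the finitely many trivial zeros near $s=0$ -- so that none of them produces a term larger than those stated. Everything else is the standard Perron/Stirling manipulation familiar from Landau's explicit formula.
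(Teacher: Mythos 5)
Your proposal has the same two-step structure as the paper's proof: establish a uniform pointwise Landau--Gonek asymptotic for $\sum_{|\gamma|\le T}x^{\rho}$ attached to $L(s,\pi)$, then divide by $N_\pi(T)\asymp T\log(qT)$ (valid uniformly on $\mathcal{F}(q)$ since $C(\pi)\asymp q$) and average using \eqref{eqn:average_weight}. The difference is that the paper simply invokes \cite[Lemma~2]{FSZ} together with \cite[Proposition~5.7]{IK} and \cite[Proposition~1]{MurtyZaharescu} to obtain the pointwise asymptotic with explicit $q$- and $\theta_m$-dependence, whereas you reprove that ingredient from scratch via the argument-principle contour argument. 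Mathematically this is the same route; you are essentially reproving the cited lemma, which buys self-containment at the cost of a lot of Gonek-style error bookkeeping that the paper delegates to the literature.

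Two places in that bookkeeping deserve a closer look. First, your bound $\sum_{n<x/2}\Lambda(n)n^{-1-1/\log x}(\log(x/n))^{-1}\ll 1$ is not quite right --- partial summation gives $\ll\log\log(3x)$, which is the source of the $\log\log$ factor present in Gonek's original lemma; this is harmless here but your stated bound is off. Second, and more seriously, the single ``good'' $T'\in[T,T+1]$ obtained by pigeonhole only gives $|\tfrac{L'}{L}(\sigma+iT',\pi)|\ll(\log qT)^2$ on the horizontal edges, and after the $\sigma$-integration and division by $N_\pi(T)$ this leaves a stray $\log qT$ factor that is not absorbed by the lemma's three error terms when $x$ is small relative to $qT$. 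To get the stated error one should instead average the contour integral over $T'\in[T,T+1]$ (or use the sharper Landau device for bounding $\int_T^{T+1}|\tfrac{L'}{L}(\sigma+it,\pi)|\,dt\ll\log qT$), exactly as in Gonek's and Murty--Zaharescu's treatments; you flag this as ``the main obstacle,'' and indeed it is the one step of your sketch that, as written, would not close.
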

\begin{proof}
Working out the $q$-dependence in \cite[Lemma 2]{FSZ} using \cite[Proposition 5.7]{IK} (see also \cite[Proposition 1]{MurtyZaharescu}), we arrive at the asymptotic
\[
\sum_{|\gamma|\leq T}x^{\rho}=-\frac{T}{\pi}\mathop{\mathrm{sinc}}\Big(T\log\frac{x}{\langle x\rangle}\Big)\Lambda(\langle x\rangle)a_{\pi}(\langle x\rangle)+O_{\epsilon}\Big(x^{1+\theta_m}\log 2x+x^{1+\epsilon}\log qT+\frac{\log qT}{\log x}\Big).
\]
Divide by $V(q)L(1,\pi,\Ad)N_{\pi}(T)\sqrt{x}$, sum over $\pi\in\mathcal{F}(q)$, and apply \eqref{eqn:RvonM} and \eqref{eqn:average_weight}.
\end{proof}

\begin{lemma}
\label{lem:ZDE_application}
If $x$ and $T$ satisfy \eqref{eqn:conditions}, then
\[
\frac{1}{V(q)}\sum_{\pi\in\mathcal{F}(q)}\frac{1}{L(1,\pi,\Ad)}\frac{1}{N_{\pi}(T)}\sum_{|\gamma|\leq T}(x^{i\gamma}-x^{\rho-1/2})\ll \frac{1}{(\log qT)(\log T)^2}+\frac{(\log x)^2}{(\log qT)\log T}.
\]
\end{lemma}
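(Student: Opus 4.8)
The plan is to isolate the part of $x^{i\gamma}-x^{\rho-1/2}$ that is linear in $\beta-\tfrac12$, observe that it cancels by the functional equation, and bound the remaining quadratically small piece with the zero density estimate. Writing $x^{i\gamma}-x^{\rho-1/2}=x^{i\gamma}\bigl(1-e^{(\beta-1/2)\log x}\bigr)$ and using $|1-e^{t}+t|\le\tfrac12 t^2 e^{|t|}$ with $t=(\beta-\tfrac12)\log x$ gives
\[
x^{i\gamma}-x^{\rho-1/2}=-(\log x)(\beta-\tfrac12)x^{i\gamma}+O\!\left((\beta-\tfrac12)^2(\log x)^2 x^{|\beta-1/2|}\right).
\]
The key point is that $\sum_{|\gamma|\le T}(\beta-\tfrac12)x^{i\gamma}=0$ for every $\pi$: by \eqref{eqn:FE} together with $\Lambda(s,\widetilde{\pi})=\overline{\Lambda(\bar s,\pi)}$, the nontrivial zeros of $L(s,\pi)$ are invariant with multiplicity under $\rho\mapsto 1-\bar\rho$, a map that preserves $\gamma$ and negates $\beta-\tfrac12$, while the zeros on the line contribute $0$. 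Applying the same symmetry once more,
\[
\Big|\sum_{|\gamma|\le T}(x^{i\gamma}-x^{\rho-1/2})\Big|\ll(\log x)^2\!\!\sum_{\substack{|\gamma|\le T\\ \beta>1/2}}(\beta-\tfrac12)^2 x^{\beta-1/2}.
\]

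Next I would insert the weights $L(1,\pi,\Ad)^{-1}$, divide by $N_\pi(T)$, write the inner sum as $\int_0^{1/2}u^2x^u\,d(-N_\pi(\tfrac12+u,T))$, and integrate by parts; the boundary term at $u=0$ vanishes because $u^2x^u$ does. This bounds the left side of the lemma by
\[
(\log x)^2\int_0^{1/2}u\,x^u(2+u\log x)\,\Bigl(\tfrac{1}{V(q)}\!\sum_{\pi\in\mathcal{F}(q)}\tfrac{N_\pi(\tfrac12+u,T)}{N_\pi(T)\,L(1,\pi,\Ad)}\Bigr)\,du.
\]
For the inner average I would use two estimates: trivially $N_\pi(\tfrac12+u,T)\le N_\pi(T)$ with \eqref{eqn:average_weight} gives $O(1)$; and, since $N_\pi(T)\gg T\log(qT)$ by \eqref{eqn:RvonM} (valid because $q$ is prime and $T=(\log q)^r$, so the main term dominates the error), \cref{thm:ZDE} with $F(q)\ll1$ (again as $q$ is prime) gives $O\bigl(T^{m-1}(\log q)^5q^{-Bu}/\log(qT)\bigr)$. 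These coincide at $u=u_\ast\asymp(\log\log q)/\log q$, so the inner average is $\ll\min\bigl(1,\,q^{-B(u-u_\ast)}\bigr)$.

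I would then split the integral at $u_\ast$. On $[0,u_\ast]$ the hypotheses $T=(\log q)^r$ and $\log x\le\tfrac{B\log T}{3\log\log T}$ force $u_\ast\log x\to0$, hence $x^u\ll1$, and the contribution is $\ll u_\ast^2\asymp(\log\log q)^2/(\log q)^2$. On $[u_\ast,1/2]$, substituting $u=u_\ast+v$ and using $x^u q^{-B(u-u_\ast)}\ll q^{-Bv/2}$ (valid since $\log x=o(\log q)$), the contribution is $\ll\int_0^\infty(v+u_\ast)(1+\log x)q^{-Bv/2}\,dv\ll(1+\log x)(\log\log q)/(\log q)^2$. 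Altogether the displayed quantity is $\ll(\log x)^2(\log\log q)^2/(\log q)^2$, and since $\log(qT)\asymp\log q$ and $\log T\asymp\log\log q$ this is $\ll(\log x)^2/\bigl((\log qT)\log T\bigr)$, which is dominated by the right-hand side of the lemma. The main obstacle is the step that motivates the whole argument: \cref{thm:ZDE} carries a factor $T^{m}$, so near the critical line it is weaker than the trivial bound $N_\pi(T)\asymp T\log q$; a direct estimate of $\sum_{|\gamma|\le T}|1-x^{\beta-1/2}|$, linear in $|\beta-\tfrac12|$, is far too lossy, and it is exactly the vanishing of the linear term — which upgrades $|\beta-\tfrac12|$ to $(\beta-\tfrac12)^2$ — that makes the estimate close.
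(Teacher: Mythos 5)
Your proof is correct, and its substance coincides with the paper's: both use the functional-equation symmetry $\rho\mapsto 1-\bar\rho$ to cancel the term linear in $\beta-\tfrac12$, reduce to a quantity quadratic in $\beta-\tfrac12$, and integrate against a crossover between the Riemann--von Mangoldt count \eqref{eqn:RvonM} (via \eqref{eqn:average_weight}) and \cref{thm:ZDE}, with the crossover width $\asymp(\log\log q)/\log q$ and a final bound of order $(\log x)^2(\log\log q)^2/(\log q)^2$. The bookkeeping is organized a little differently: the paper first splits the zeros into $|\beta-\tfrac12|\geq\delta$ (where the triangle inequality plus the super-polynomial decay of $q^{-B\delta}$ with $\delta=\tfrac{3\log\log T}{2B\log T}$ suffices) and $|\beta-\tfrac12|<\delta$, and only in the latter range invokes the pairing to produce the clean quadratic factor $4\sinh^2(\tfrac12(\beta-\tfrac12)\log x)\ll(\beta-\tfrac12)^2(\log x)^2$; you instead extract the linear term once and for all, carry the Taylor remainder $(\beta-\tfrac12)^2(\log x)^2 x^{|\beta-1/2|}$ for every zero, and let the factor $q^{-Bu}$ in \cref{thm:ZDE} absorb the $x^{u}$ growth (harmless because $\log x=o(\log q)$ under \eqref{eqn:conditions}), which merges the paper's two splits into a single one of the integration variable at $u_*$. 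This is a genuine streamlining of the bookkeeping but not a different method; the key inputs and the quality of the final estimate are identical.
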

\begin{proof}
Let $x$, $T$, and $\delta$ be as in \eqref{eqn:conditions}.  Note  that $0<\delta\log x<1$.  We find that
\begin{equation}
\label{eqn:ZDE_application1}
\begin{aligned}
&\frac{1}{V(q)}\sum_{\pi\in\mathcal{F}(q)}\frac{1}{L(1,\pi,\Ad)}\frac{1}{N_{\pi}(T)}\sum_{\substack{|\gamma|\leq T \\ |\beta-\frac{1}{2}|\geq \delta}}(x^{i\gamma}-x^{\rho-\frac{1}{2}})\\
&\ll \frac{1}{T\log qT}\frac{1}{V(q)}\sum_{\pi\in\mathcal{F}(q)}\frac{1}{L(1,\pi,\Ad)}\sum_{\substack{|\gamma|\leq T \\ \beta\geq \frac{1}{2}+\delta}}x^{\beta-\frac{1}{2}}\\
&\ll \frac{1}{T\log qT}\Big(\frac{x^{\delta}}{V(q)}\sum_{\pi\in\mathcal{F}(q)}\frac{N_{\pi}(\frac{1}{2}+\delta,T)}{L(1,\pi,\Ad)}+(\log x)\int_{\frac{1}{2}+\delta}^{1}x^{\sigma-\frac{1}{2}}\frac{1}{V(q)}\sum_{\pi\in\mathcal{F}(q)}\frac{N_{\pi}(\sigma,T)}{L(1,\pi,\Ad)}d\sigma\Big).
\end{aligned}
\hspace{-2mm}
\end{equation}
\cref{thm:ZDE} and our hypotheses on $x$ and $T$ ensure that \eqref{eqn:ZDE_application1} is $\ll (\log T)^{-2}(\log qT)^{-1}$.  On the other hand, we find that
\begin{equation}
\label{eqn:ZDE_application2}
\begin{aligned}
&\frac{1}{V(q)}\sum_{\pi\in\mathcal{F}(q)}\frac{1}{L(1,\pi,\Ad)}\frac{1}{N_{\pi}(T)}\sum_{\substack{|\gamma|\leq T \\ |\beta-\frac{1}{2}|< \delta}}(x^{i\gamma}-x^{\rho-\frac{1}{2}})\\
&\ll\frac{1}{T\log qT}\frac{1}{V(q)}\sum_{\pi\in\mathcal{F}(q)}\frac{1}{L(1,\pi,\Ad)}\sum_{\substack{|\gamma|\leq T \\ \frac{1}{2}<\beta<\frac{1}{2}+\delta}}|x^{i\gamma}(1-x^{\beta-\frac{1}{2}})+x^{i\gamma}(1-x^{\frac{1}{2}-\beta})|\\
&=\frac{1}{T\log qT}\frac{1}{V(q)}\sum_{\pi\in\mathcal{F}(q)}\frac{1}{L(1,\pi,\Ad)}\sum_{\substack{|\gamma|\leq T \\ \frac{1}{2}<\beta<\frac{1}{2}+\delta}}4\sinh\Big(\frac{1}{2}\Big(\beta-\frac{1}{2}\Big)\log x\Big)^2.
\end{aligned}
\end{equation}
If $0<\beta-\frac{1}{2}<\delta$, then $0<\frac{1}{2}(\beta-\frac{1}{2})\log x<\frac{1}{2}$.  Since $y<\sinh(y)<2y$ for all $0<y<\frac{1}{2}$, \eqref{eqn:ZDE_application2} is at most
\begin{align*}
&\ll \frac{(\log x)^2}{T(\log qT)V(q)}\sum_{\pi\in\mathcal{F}(q)}\frac{1}{L(1,\pi,\Ad)}\sum_{\substack{|\gamma|\leq T \\ 0<\beta-\frac{1}{2}<\delta}}\Big(\beta-\frac{1}{2}\Big)^2\\
&\ll \frac{(\log x)^2}{T(\log qT)} \int_0^{\delta}\frac{t}{V(q)}\sum_{\pi\in\mathcal{F}(q)}\frac{N_{\pi}(\frac{1}{2}+t,T)}{L(1,\pi,\Ad)}dt\\
&\ll \frac{(\log x)^2}{T(\log qT)} \Big(\int_0^{((m-1)Br + \frac{5}{B})\frac{\log\log q}{\log q}}+\int_{((m-1)Br + \frac{5}{B})\frac{\log\log q}{\log q}}^{\delta}\Big)\frac{t}{V(q)}\sum_{\pi\in\mathcal{F}(q)}\frac{N_{\pi}(\frac{1}{2}+t,T)}{L(1,\pi,\Ad)}dt.
\end{align*}
 We use \eqref{eqn:RvonM} to estimate the first integral and \cref{thm:ZDE} to estimate the second integral.  By \eqref{eqn:conditions}, their sum is $O((\log x)^2 (\log T)^{-1}(\log qT)^{-1})$.
\end{proof}

\begin{corollary}
\label{cor:GonekLandau+ZDE}
If $x$ and $T$ satisfy \eqref{eqn:conditions}, then
\begin{align*}
\frac{1}{V(q)}\sum_{\pi\in\mathcal{F}(q)}\frac{1}{L(1,\pi,\Ad)}&\frac{1}{N_{\pi}(T)}\sum_{|\gamma|\leq T}x^{i\gamma}\\
&=-\frac{T}{\pi\sqrt{x}}\mathop{\mathrm{sinc}}\Big(T\log\frac{x}{\langle x\rangle}\Big)\frac{\Lambda(\langle x\rangle)}{V(q)}\sum_{\pi\in\mathcal{F}(q)}\frac{a_{\pi}(\langle x\rangle)}{L(1,\pi,\Ad)}\frac{1}{N_{\pi}(T)}\\
&+O_r\Big(\frac{\log\log q}{(\log\log\log q)^2\log q}+\frac{1}{(\log q)^r \sqrt{x}\log x}\Big).
\end{align*}
\end{corollary}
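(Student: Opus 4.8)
The plan is to obtain \cref{cor:GonekLandau+ZDE} by simply adding the two preceding results, using the trivial decomposition $x^{i\gamma} = x^{\rho-\frac12} + (x^{i\gamma} - x^{\rho-\frac12})$ inside the sum over zeros. The $\mathop{\mathrm{sinc}}$ expression (involving $a_{\pi}(\langle x\rangle)$) is inherited verbatim as the main term from \cref{lem:GonekLandau}, so the entire content of the corollary reduces to the assertion that, under the constraints in \eqref{eqn:conditions}, all error terms produced by \cref{lem:GonekLandau} and \cref{lem:ZDE_application} collapse into $\frac{\log\log q}{(\log\log\log q)^2\log q} + \frac{1}{(\log q)^r\sqrt{x}\log x}$.

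First I would record the consequences of \eqref{eqn:conditions}: since $T=(\log q)^r$, we have $\log T = r\log\log q$, $\log(qT)\asymp \log q$, and $\log\log T\asymp \log\log\log q$; moreover the upper bound on $x$ yields $\log x\le \frac{B\log T}{3\log\log T}\asymp \frac{\log\log q}{\log\log\log q}$, so in particular $x=(\log q)^{o(1)}$ as $q\to\infty$. With these in hand I would dispose of the error terms one at a time. From \cref{lem:ZDE_application}, the term $\frac{(\log x)^2}{(\log qT)\log T}$ is, at the extreme admissible value of $x$, of size $\asymp \frac{(\log\log q)^2/(\log\log\log q)^2}{\log q\cdot \log\log q}=\frac{\log\log q}{(\log\log\log q)^2\log q}$, and this is the dominant contribution; the companion term $\frac{1}{(\log qT)(\log T)^2}\asymp \frac{1}{\log q\,(\log\log q)^2}$ is comfortably smaller. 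From \cref{lem:GonekLandau}, after fixing $\epsilon$ to be a small absolute constant (so the implied constant becomes $O_r$), the terms $\frac{x^{1/2+\theta_m}\log(2x)}{T\log(qT)}$ and $\frac{x^{1/2+\epsilon}}{T}$ are both $O\big((\log q)^{o(1)}/T\big)=O\big((\log q)^{-r+o(1)}\big)=O\big((\log q)^{-2}\big)$, hence absorbed into $\frac{\log\log q}{(\log\log\log q)^2\log q}$, while the remaining term $\frac{1}{T\sqrt{x}\log x}=\frac{1}{(\log q)^r\sqrt{x}\log x}$ is kept unchanged.

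There is essentially no obstacle beyond careful bookkeeping: the only point requiring a moment's thought is identifying which of the many error terms dominates, and the answer — $\frac{(\log x)^2}{(\log qT)\log T}$ with $x$ taken as large as allowed — is precisely what forces the shape of the constraint $\log x \ll \log T/\log\log T$ in \eqref{eqn:conditions}. Collecting the bounds above and comparing with those constraints completes the proof.
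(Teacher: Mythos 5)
Your proposal is correct and takes essentially the same approach as the paper, which simply cites \eqref{eqn:conditions}, \cref{lem:GonekLandau}, and \cref{lem:ZDE_application} and leaves the bookkeeping to the reader. Your estimates are accurate: with $T=(\log q)^r$ one has $\log T\asymp\log\log q$, $\log\log T\asymp\log\log\log q$, $\log(qT)\asymp\log q$, so the dominant term $\frac{(\log x)^2}{(\log qT)\log T}$ at the maximal admissible $x$ is indeed $\asymp_r\frac{\log\log q}{(\log\log\log q)^2\log q}$, the terms $\frac{x^{1/2+\theta_m}\log(2x)}{T\log(qT)}$ and $\frac{x^{1/2+\epsilon}}{T}$ are $(\log q)^{o(1)-r}$ and hence absorbed because $r>2$, and $\frac{1}{T\sqrt{x}\log x}$ must be retained since it is unbounded as $x\to 1^+$.
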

\begin{proof}
This follows immediately from \eqref{eqn:conditions}, \cref{lem:GonekLandau}, and \cref{lem:ZDE_application}.
\end{proof}

Finally, we cite the Beurling--Selberg trigonometric approximation for the indicator function of an interval.

\begin{proposition}[{\cite[Chapter 1, Section 2]{Montgomery}}]
\label{prop:Beurling-Selberg}
Let $I\subseteq[0,1]$ be an interval, and let $M\geq 1$ be an integer.  There exist trigonometric polynomials
\[
S_{I,M}^{\pm}(\theta)=\sum_{|n|\leq M}a_{I,M}^{\pm}(n)e^{2\pi i n\theta}
\]
that satisfy the following properties.
\begin{enumerate}
	\item If $\theta\in[0,1]$, then $S_{I,M}^-(\theta)\leq \mathbf{1}_I(\theta)\leq S_{I,M}^+(\theta)$.
	\item If $|n|\leq M$, then $a_{I,M}^{\pm}(n)+a_{I,M}^{\pm}(-n)=2\re(S_{I,M}^{\pm}(n))$.
	\item If $|n|\leq M$, then $|a_{I,M}^{\pm}-\int_I e^{-2\pi i n t}dt|\leq 1/(M+1)$.
\end{enumerate}
\end{proposition}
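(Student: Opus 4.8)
The plan is to obtain this classical statement from Beurling's extremal majorant of the signum function, which I would take as a black box. Recall Beurling's entire function
\[
B(z)=\Big(\frac{\sin\pi z}{\pi}\Big)^{2}\Big(\sum_{n\geq 0}\frac{1}{(z-n)^{2}}-\sum_{n\geq 1}\frac{1}{(z+n)^{2}}+\frac{2}{z}\Big),
\]
which has exponential type $2\pi$ (so its Fourier transform is supported in $[-1,1]$ and vanishes at $\pm 1$), satisfies $B(x)\geq\sgn(x)$ for all real $x$, and obeys $\int_{\R}(B(x)-\sgn(x))\,dx=1$; a minorant of $\sgn$ is then $-B(-x)$. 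The construction of $B$ and the verification of these properties is the one genuinely non-elementary ingredient, carried out in \cite[Chapter~1]{Montgomery}, and I would not reprove it.

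First I would build majorants and minorants of the indicator of a bounded interval on the line. Using $\mathbf{1}_{[\alpha,\beta]}(x)=\tfrac{1}{2}\big(\sgn(x-\alpha)+\sgn(\beta-x)\big)$ and rescaling $B$ by $\delta>0$, set
\[
S^{+}_{\delta}(x)=\tfrac{1}{2}\Big(B(\delta(x-\alpha))+B(\delta(\beta-x))\Big),\qquad S^{-}_{\delta}(x)=-\tfrac{1}{2}\Big(B(\delta(\alpha-x))+B(\delta(x-\beta))\Big).
\]
These satisfy $S^{-}_{\delta}\leq\mathbf{1}_{[\alpha,\beta]}\leq S^{+}_{\delta}$ pointwise, have Fourier transforms supported in $[-\delta,\delta]$ and vanishing at $\pm\delta$, and, by the substitution $u=\delta(x-\alpha)$ together with the normalization of $B$, satisfy $\int_{\R}(S^{\pm}_{\delta}-\mathbf{1}_{[\alpha,\beta]})=\pm\,\delta^{-1}$. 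Next I would periodize: for $I=[\alpha,\beta]\subseteq[0,1]$ and an integer $M\geq 1$, take $\delta=M+1$ and put $S^{\pm}_{I,M}(\theta)=\sum_{k\in\Z}S^{\pm}_{M+1}(\theta+k)$. By Poisson summation this is a trigonometric polynomial $\sum_{|n|\leq M}a^{\pm}_{I,M}(n)e^{2\pi i n\theta}$ of degree at most $M$ (degree $M$ rather than $M+1$ precisely because $\widehat{S^{\pm}_{M+1}}(\pm(M+1))=0$), with $a^{\pm}_{I,M}(n)=\widehat{S^{\pm}_{M+1}}(n)$. Summing the pointwise inequality over all integer shifts, and using $\beta-\alpha<1$ so that the shifted copies of $\mathbf{1}_{[\alpha,\beta]}$ reassemble into $\mathbf{1}_{I}$ on $[0,1]$, gives property~(1). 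Property~(2) is immediate from the fact that $S^{\pm}_{I,M}$ is real-valued, so $a^{\pm}_{I,M}(-n)=\overline{a^{\pm}_{I,M}(n)}$ and hence $a^{\pm}_{I,M}(n)+a^{\pm}_{I,M}(-n)=2\re(a^{\pm}_{I,M}(n))$. For property~(3) I would write $a^{\pm}_{I,M}(n)-\int_{I}e^{-2\pi i nt}\,dt=\widehat{S^{\pm}_{M+1}-\mathbf{1}_{[\alpha,\beta]}}(n)$ and bound the right side by $\|S^{\pm}_{M+1}-\mathbf{1}_{[\alpha,\beta]}\|_{L^{1}(\R)}=(M+1)^{-1}$, which holds because the difference has a fixed sign and was computed above.

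Granting Beurling's function, the argument above is essentially bookkeeping, and the one point deserving attention is that the scaling $\delta=M+1$ is exactly what simultaneously produces a polynomial of degree at most $M$ and the sharp constant $(M+1)^{-1}$ in property~(3); the fact that makes this possible is $\widehat{B}(\pm1)=0$, an endpoint property of the extremal transform. The serious work, and hence the real obstacle if one insists on a self-contained proof, lies entirely in constructing $B$ — i.e.\ in the interpolation-series and Poisson-kernel analysis underlying Beurling's and Selberg's extremal problem — for which I would simply refer to \cite[Chapter~1, Section~2]{Montgomery}.
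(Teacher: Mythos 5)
The paper gives no proof of this proposition; it is cited verbatim from Montgomery's book, where it is obtained exactly by the route you sketch, namely by scaling and periodizing Selberg's majorant and minorant of an interval, which are built in turn from Beurling's extremal majorant $B$ of $\mathrm{sgn}$. So your argument is correct and is essentially the same as the cited reference's.

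One small point of phrasing worth fixing: $B\notin L^1(\R)$, so $\widehat{B}(\pm1)=0$ is not literally well posed as a statement about a function. The clean reason that the periodization of $S^{\pm}_{M+1}$ is a trigonometric polynomial of degree at most $M$ (and not $M+1$) is that $S^{\pm}_{M+1}$ is itself an entire function of exponential type $2\pi(M+1)$ lying in $L^1(\R)$; hence $\widehat{S^{\pm}_{M+1}}$ is a \emph{continuous} function supported in $[-(M+1),M+1]$, and continuity forces it to vanish at the endpoints, so $a^{\pm}_{I,M}(\pm(M+1))=\widehat{S^{\pm}_{M+1}}(\pm(M+1))=0$. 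You also silently correct two typographical slips in the paper's statement of properties (2) and (3) (in (2) the right-hand side should read $2\re(a_{I,M}^{\pm}(n))$, and in (3) the coefficient should carry the argument $n$), which is the right reading.
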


\begin{proof}[Proof of \cref{thm:zeros_equidistr}]

Let $q$ be a large prime, let $\alpha\in\R^{\times}$, and recall \eqref{eqn:conditions}.  For $n\in\mathbb{N}$, write $x_n=e^{2\pi n |\alpha|}$.  It follows from \cref{prop:Beurling-Selberg} that
\begin{align*}
\frac{1}{N_{\pi}(T)}\sum_{\substack{|\gamma|\leq T \\ \{\alpha\gamma\}\in I}}1-|I|&\leq \sum_{1\leq|n|\leq M}\frac{a_{I,M}^+(n)}{N_{\pi}(T)}\sum_{\substack{|\gamma|\leq T \\ \{\alpha\gamma\}\in I}}e^{2\pi i n \alpha\gamma}+O\Big(\frac{1}{M}\Big)\\
&=2\re\Big(\sum_{1\leq n\leq M}\frac{a_{I,M}^+(n)}{N_{\pi}(T)}\sum_{\substack{|\gamma|\leq T \\ \{\alpha\gamma\}\in I}}x_n^{i\gamma}\Big)+O\Big(\frac{1}{M}\Big)
\end{align*}
We divide through by $V(q)L(1,\pi,\Ad)$ and sum over $\pi\in\mathcal{F}(q)$.  Using \eqref{eqn:average_weight}, we obtain
\begin{align*}
&\frac{1}{V(q)}\sum_{\pi\in\mathcal{F}(q)}\frac{1}{L(1,\pi,\Ad)}\Big(\frac{1}{N_{\pi}(T)}\sum_{\substack{|\gamma|\leq T \\ \{\alpha\gamma\}\in I}}1-|I|\Big)\\
&\leq 2\re\Big(\sum_{1\leq n\leq M}\frac{a_{I,M}^+(n)}{V(q)}\sum_{\pi\in\mathcal{F}(q)}\frac{1}{L(1,\pi,\Ad)N_{\pi}(T)}\sum_{\substack{|\gamma|\leq T \\ \{\alpha\gamma\}\in I}}x_n^{i\gamma}\Big)+O\Big(\frac{1}{M}\Big).
\end{align*}
\cref{prop:Beurling-Selberg} also yields a  corresponding lower bound, in which case
\begin{equation}
\label{eqn:equidistr_3}
\begin{aligned}
&\sup_{I\subseteq[0,1]}\Big|\frac{1}{V(q)}\sum_{\pi\in\mathcal{F}(q)}\frac{1}{L(1,\pi,\Ad)}\Big(\frac{1}{N_{\pi}(T)}\sum_{\substack{|\gamma|\leq T \\ \{\alpha\gamma\}\in I}}1-|I|\Big)\Big|\\
&\leq 2\sup_{I\subseteq[0,1]}\max_{\pm}\Big|\sum_{1\leq n\leq M}\frac{a_{I,M}^{\pm}(n)}{V(q)}\sum_{\pi\in\mathcal{F}(q)}\frac{1}{L(1,\pi,\Ad)N_{\pi}(T)}\sum_{\substack{|\gamma|\leq T \\ \{\alpha\gamma\}\in I}}x_n^{i\gamma}\Big|+O\Big(\frac{1}{M}\Big).
\end{aligned}
\end{equation}
If
\[
M= \frac{B\log T}{6\pi|\alpha|\log\log T}
\]
and $1\leq n\leq M$, then $x_n$ satisfies the hypotheses of \cref{cor:GonekLandau+ZDE}.  Using \eqref{eqn:average_weight} and the bound $|S_{I,M}^{\pm}(n)|\ll 1/n$ for $1\leq n\leq M$ that follows from \cref{prop:Beurling-Selberg}, we infer that \eqref{eqn:equidistr_3} is at most
\begin{align*}
&2\sup_{I\subseteq[0,1]}\max_{\pm}\Big|\frac{2T}{\pi}\re\sum_{1\leq n\leq M} \frac{a_{I,M}^{\pm}(n)}{\sqrt{x_n}}\mathop{\mathrm{sinc}}\Big(T\log\frac{x_n}{\langle x_n\rangle}\Big)\frac{\Lambda(\langle x_n\rangle)}{V(q)}\sum_{\pi\in\mathcal{F}(q)}\frac{a_{\pi}(\langle x_n\rangle)}{L(1,\pi,\Ad)}\frac{1}{N_{\pi}(T)}\Big| \\
&+ O\Big(\sum_{1\leq n\leq M}\frac{1}{n}\Big(\frac{\log\log q}{(\log\log\log q)^2\log q}+\frac{1}{(\log q)^r e^{\pi n \alpha}n|\alpha|}\Big)+\frac{1}{M}\Big).
\end{align*}
Per \eqref{eqn:RvonM},  \eqref{eqn:vonMangoldtBound}, and \eqref{eqn:average_weight}, the first line is $\ll_{\alpha,r} T/(T\log qT)\ll 1/\log q$.   The second line is $O_{\alpha,r}((\log\log\log q)/\log\log q)$, finishing the proof.
\end{proof}

\subsection{Proof of \cref{thm:zeros_equidistr2}} Once we replace $q$ with $Q$, $V(q)$ with $|\mathcal{G}(Q)|$, and $\mathcal{F}(q)$ with $\mathcal{G}(Q)$, the proof is the same as that of \cref{thm:zeros_equidistr}.

\bibliographystyle{abbrv}
\bibliography{BlomerThorner}

\def\cprime{$'$}
\begin{thebibliography}{10}

\bibitem{BLTZ}
O.~Beckwith, D.~Liu, J.~Thorner, and A.~Zaharescu.
\newblock A zero density estimate and fractional imaginary parts of zeros for
  {$\rm{GL}_2$} {$L$}-functions.
\newblock {\em Math. Proc. Cambridge Philos. Soc.}, 174(3):605--630, 2023.

\bibitem{Blomer_density}
V.~Blomer.
\newblock Density theorems for {${\rm GL}(n)$}.
\newblock {\em Invent. Math.}, 232(2):683--711, 2023.

\bibitem{BB-Bulletin}
V.~Blomer and F.~Brumley.
\newblock The role of the {R}amanujan conjecture in analytic number theory.
\newblock {\em Bull. Amer. Math. Soc. (N.S.)}, 50(2):267--320, 2013.

\bibitem{BH}
C.~J. Bushnell and G.~Henniart.
\newblock An upper bound on conductors for pairs.
\newblock {\em J. Number Theory}, 65(2):183--196, 1997.

\bibitem{FSZ}
K.~Ford, K.~Soundararajan, and A.~Zaharescu.
\newblock On the distribution of imaginary parts of zeros of the {R}iemann zeta
  function. {II}.
\newblock {\em Math. Ann.}, 343(3):487--505, 2009.

\bibitem{FordZaharescu}
K.~Ford and A.~Zaharescu.
\newblock On the distribution of imaginary parts of zeros of the {R}iemann zeta
  function.
\newblock {\em J. Reine Angew. Math.}, 579:145--158, 2005.

\bibitem{Fujii_density}
A.~Fujii.
\newblock On the zeros of {D}irichlet {$L$}-functions. {I}.
\newblock {\em Trans. Amer. Math. Soc.}, 196:225--235, 1974.

\bibitem{Fujii}
A.~Fujii.
\newblock Some problems of {D}iophantine approximation in the theory of the
  {R}iemann zeta function. {III}.
\newblock {\em Comment. Math. Univ. St. Paul.}, 42(2):161--187, 1993.

\bibitem{Hlawka}
E.~Hlawka.
\newblock \"{U}ber die {G}leichverteilung gewisser {F}olgen, welche mit den
  {N}ullstellen der {Z}etafunktion zusammenh\"{a}ngen.
\newblock {\em \"{O}sterreich. Akad. Wiss. Math.-Naturwiss. Kl. S.-B. II},
  184(8-10, 8):459--471, 1975.

\bibitem{HT_density}
P.~Humphries and J.~Thorner.
\newblock Zeros of {R}ankin-{S}elberg {$L$}-functions in families.
\newblock {\em Compos. Math.}, 160(5):1041--1072, 2024.

\bibitem{IK}
H.~Iwaniec and E.~Kowalski.
\newblock {\em Analytic number theory}, volume~53 of {\em American Mathematical
  Society Colloquium Publications}.
\newblock American Mathematical Society, Providence, RI, 2004.

\bibitem{IS}
H.~Iwaniec and P.~Sarnak.
\newblock Perspectives on the analytic theory of {$L$}-functions.
\newblock {\em Geom. Funct. Anal.}, (Special Volume, Part II):705--741, 2000.
\newblock GAFA 2000 (Tel Aviv, 1999).

\bibitem{JPSS-MathAnn}
H.~Jacquet, I.~I. Piatetski-Shapiro, and J.~Shalika.
\newblock Conducteur des repr\'esentations du groupe lin\'eaire.
\newblock {\em Math. Ann.}, 256(2):199--214, 1981.

\bibitem{Jana_newvectors}
S.~Jana.
\newblock Applications of analytic newvectors for {$\text{GL}(n)$}.
\newblock {\em Math. Ann.}, 380(3-4):915--952, 2021.

\bibitem{KM_J0}
E.~Kowalski and P.~Michel.
\newblock The analytic rank of {$J_0(q)$} and zeros of automorphic
  {$L$}-functions.
\newblock {\em Duke Math. J.}, 100(3):503--542, 1999.

\bibitem{KMV}
E.~Kowalski, P.~Michel, and J.~VanderKam.
\newblock Non-vanishing of high derivatives of automorphic {$L$}-functions at
  the center of the critical strip.
\newblock {\em J. Reine Angew. Math.}, 526:1--34, 2000.

\bibitem{Li}
X.~Li.
\newblock Upper bounds on {$L$}-functions at the edge of the critical strip.
\newblock {\em Int. Math. Res. Not. IMRN}, 2010(4):727--755, 2010.

\bibitem{LRS}
W.~Luo, Z.~Rudnick, and P.~Sarnak.
\newblock On the generalized {R}amanujan conjecture for {${\rm GL}(n)$}.
\newblock In {\em Automorphic forms, automorphic representations, and
  arithmetic ({F}ort {W}orth, {TX}, 1996)}, volume~66 of {\em Proc. Sympos.
  Pure Math.}, pages 301--310. Amer. Math. Soc., Providence, RI, 1999.

\bibitem{Luo_density}
W.~Z. Luo.
\newblock Zeros of {H}ecke {$L$}-functions associated with cusp forms.
\newblock {\em Acta Arith.}, 71(2):139--158, 1995.

\bibitem{Montgomery}
H.~L. Montgomery.
\newblock {\em Ten lectures on the interface between analytic number theory and
  harmonic analysis}, volume~84 of {\em CBMS Regional Conference Series in
  Mathematics}.
\newblock Published for the Conference Board of the Mathematical Sciences,
  Washington, DC; by the American Mathematical Society, Providence, RI, 1994.

\bibitem{MS}
W.~M{\"u}ller and B.~Speh.
\newblock Absolute convergence of the spectral side of the {A}rthur trace
  formula for {${\rm GL}_n$}.
\newblock {\em Geom. Funct. Anal.}, 14(1):58--93, 2004.
\newblock With an appendix by E. M. Lapid.

\bibitem{MurtyZaharescu}
R.~Murty and A.~Zaharescu.
\newblock Explicit formulas for the pair correlation of zeros of functions in
  the {S}elberg class.
\newblock {\em Forum Math.}, 14(1):65--83, 2002.

\bibitem{Poitou}
G.~Poitou.
\newblock Sur les petits discriminants.
\newblock In {\em S\'{e}minaire {D}elange-{P}isot-{P}oitou, 18e ann\'{e}e:
  1976/77, {T}h\'{e}orie des nombres, {F}asc. 1}, pages Exp. No. 6, 18.
  Secr\'{e}tariat Math., Paris, 1977.

\bibitem{Rademacher}
H.~Rademacher.
\newblock {\em Collected papers of {H}ans {R}ademacher. {V}ol. {II}}, volume~4
  of {\em Mathematicians of Our Time}.
\newblock MIT Press, Cambridge, Mass.-London, 1974.
\newblock Edited and with a preface by Emil Grosswald, With a biographical
  sketch.

\bibitem{RadziwillYang}
M.~{Radziwi{\l}{\l}} and L.~{Yang}.
\newblock {Non-vanishing of twists of $\text{GL}_4(\mathbb{A}_{\mathbb{Q}})$
  $L$-functions}.
\newblock {\em arXiv:2304.09171}.

\bibitem{Selberg_density}
A.~Selberg.
\newblock Contributions to the theory of the {R}iemann zeta-function.
\newblock {\em Arch. Math. Naturvid.}, 48(5):89--155, 1946.

\bibitem{ST}
K.~Soundararajan and J.~Thorner.
\newblock Weak subconvexity without a {R}amanujan hypothesis.
\newblock {\em Duke Math. J.}, 168:1231--1268, 2019.
\newblock With an appendix by Farrell Brumley.

\bibitem{Stanley}
R.~P. Stanley.
\newblock {\em Enumerative combinatorics. {V}ol. 2}, volume~62 of {\em
  Cambridge Studies in Advanced Mathematics}.
\newblock Cambridge University Press, Cambridge, 1999.
\newblock With a foreword by Gian-Carlo Rota and appendix 1 by Sergey Fomin.

\end{thebibliography}

\end{document}